\documentclass[3p]{elsarticle}
\usepackage{amsmath}
\usepackage{amssymb}
\usepackage{hyperref}

%

\newtheorem{theorem}{Theorem}
\newtheorem{claim}{Claim}
\newtheorem{corollary}{Corollary}
\newdefinition{definition}{Definition}
\newproof{example}{\textbf{Example}}
\newtheorem{lemma}{Lemma}
\newdefinition{notation}{{\bf Notation}}
\newtheorem{proposition}{Proposition}
\newdefinition{remark}{Remark}
\newproof{proof}{{\bf Proof}}

\numberwithin{equation}{section}

%

\newcommand{\I}{\infty}
\renewcommand{\S}{\sigma}

\newcommand{\T}{\tilde}

\newcommand{\Z}{\zeta}
\renewcommand{\L}{\lambda}
\newcommand{\Gm}{\varGamma}
\newcommand{\LL}{\varLambda}
\renewcommand{\r}{\varrho}

\renewcommand{\P}{\partial}
\newcommand{\B}{\beta}
\newcommand{\D}{\delta}
\newcommand{\Lap}{\triangle}
\renewcommand{\O}{\overline}
\newcommand{\A}{\alpha}
\newcommand{\lng}{\langle}
\newcommand{\rng}{\rangle}
\newcommand{\omg}{\omega}
\newcommand{\V}{\varepsilon}
\newcommand{\vp}{\varphi}
\renewcommand{\t}{\tau}
\renewcommand{\th}{\theta}

\newcommand{\wh}{\widehat}
\newcommand{\QED}{\qquad\mbox{\qed}}
\renewcommand{\wh}{\widehat}
\newcommand{\U}{\underline}
\newcommand{\VS}{\varSigma}
\newcommand{\VP}{\varPi}
\newcommand{\Vt}{\vartheta}

\newcommand{\R}{\mathbb{R}}
\newcommand{\BN}{\mathbb{N}}

\newcommand{\CB}{\mathcal{B}}
\newcommand{\CG}{\mathcal{G}}
\newcommand{\CE}{\mathcal{E}}
\newcommand{\CF}{\mathcal{F}}

\newcommand{\CI}{\mathcal{I}}

\newcommand{\CK}{\mathcal{K}}

\newcommand{\CS}{\mathcal{S}}

\newcommand{\CM}{\mathcal{M}}
\newcommand{\CN}{\mathcal{N}}

\renewcommand{\CG}{\mathcal{G}}

\newcommand{\CX}{\mathcal{X}}
\newcommand{\CZ}{\mathcal{Z}}
\newcommand{\Uv}{\underline{\vartheta}}
\newcommand{\vU}{\varUpsilon}
\newcommand{\Ov}{\overline{\vartheta}}
\renewcommand{\leq}{\leqslant}
\renewcommand{\geq}{\geqslant}



\DeclareMathOperator{\sign}{sign}

\journal{Journal of Differential Equations}

\begin{document}

\begin{frontmatter}

\title{Global well-posedness and grow-up rate of solutions for a sublinear pseudoparabolic equation 
}

\author[chu]{Sujin Khomrutai\corref{cor1}}
\ead{sujin.k@chula.ac.th}

\address[chu]{Department of Mathematics and Computer Science, Faculty of Science, Chulalongkorn University, Bangkok 10330, Thailand}

\cortext[cor1]{Corresponding author.}

\begin{keyword}
Pseudoparabolic equations \sep Global solutions \sep Uniqueness \sep Non-uniqueness \sep Grow-up rate \sep Non-autonomous \sep Unbounded coefficient \sep Sublinear   \sep Maximal solutions 
\MSC[2010] 35K70, 35A01, 35A02, 35B40, 35B51, 35B09
\end{keyword}

\begin{abstract}
We study positive solutions of the pseudoparabolic equation with a sublinear source in $\mathbb{R}^n$. In this work, the source coefficient could be unbounded and time-dependent. Global existence of solutions to the Cauchy problem is established within weighted continuous spaces by approximation and monotonicity arguments. Every solution with non-zero initial value is shown to exhibit a certain lower grow-up and radial growth bound, depending only upon the sublinearity and the unbounded, time-dependent potential. Using the lower grow-up/growth bound, we can prove the key comparison principle. Then we settle the uniqueness of solutions for the problem with non-zero initial condition by employing the comparison principle. For the problem with the zero initial condition, we can classify the non-trivial solutions in terms of the maximal solutions. When the initial condition has a power radial growth, we can derive the precise asymptotic grow-up rate of solutions and obtain the critical growth exponent.
\end{abstract}

\end{frontmatter}

\section{Introduction}

We study the existence, uniqueness or non-uniqueness, and grow-up rate of solutions $u=u(x,t)\geq0$ to the semi-linear pseudoparabolic Cauchy problem 
\begin{align}\label{Eqn:Main}
\begin{cases}
\P_tu-\Lap\P_tu=\Lap u+V(x,t)u^p&x\in\R^n,t>0,\\
\qquad u(x,0)=u_0(x)&x\in\R^n,
\end{cases}
\end{align}
where $0<p<1$ is a constant, $V,u_0\geq0$ are given continuous functions, and $n\geq1$. In this work, the potential function $V$ can be non-autonomous (that is time-dependent) and unbounded. Our framework can be applied to a more general equation of the form
\begin{align}
\P_tu-\nu\Lap\P_tu=\Lap u+S(x,t,u)
\end{align}
where $\nu\geq0$ is a constant and the source function $S$ satisfies $0\leq S(x,t,u)\lesssim V(x,t)|u|^p$. This equation, of course, includes the sublinear heat equations with unbounded non-autonomous source.\smallskip

Pseudoparabolic equations are models of many important nonlinear physical systems (see for instance \cite{AKS11,BZK60,BBM72,KS03,Padron04} and the references therein). Also, in recent years, there has been a great interest in studying nonlinear evolution equations with unbounded, or singular, and even time-dependent coefficients.\smallskip

When the viscosity term $\Lap\P_tu$ is dropped, Eq.\ (\ref{Eqn:Main}) becomes the heat equation which is closely related with the pseudoparabolic equation \cite{RaoTing72,ShT70,Ting63,Ting69}. Nonlinear heat equations and systems on a (bounded or unbounded) domain have been studied extensively and are quite well understood. However, the nonlinear pseudoparabolic equations, especially those considered on an unbounded domain, have received very few attentions. This could be explained from the difference of their Green kernels:
\begin{align}
H(x,t)=\CF^{-1}\left(e^{-t|\xi|^2}\right),\qquad G(x,t)=\CF^{-1}\left(e^{-t|\xi|^2/(1+|\xi|^2)}\right)
\end{align}
for the linear heat and pseudoparabolic equations, respectively. We note that if $u\in\CS'$, a tempered distribution, then $H\ast u\in\CS$, however, we only have $G\ast u\in\CS'$ (\cite{JoshiFried98}). In addition, $H\simeq t^{-n/2}e^{-|x|^2/(4t)}$ but there is no an explicit expression for the pseudoparabolic kernel. In fact, the kernel $G$ is very complicated, see Eq.\ \ref{Eqn:Green}.\smallskip

To our knowledge, the first study on positive solutions of semilinear pseudoparabolic equations on $\R^n$ is the work by Cao et al.\ \cite{CaoYinWang09} (see also \cite{KaiNaumShish05} for an inspiring work on real value solutions). They considered Eq.\ (\ref{Eqn:Main}) with a constant potential, that is
\begin{align}
\P_tu-\Lap\P_tu=\Lap u+u^p\qquad\mbox{in $\R^n\times(0,\I)$},
\end{align}
for any constant $p>0$. In the sublinear case, that is $0<p<1$, they established the existence of solutions within $BC(\R^n)\triangleq C(\R^n)\cap L^\I(\R^n)$ by the method of sub- and super-solutions, but left the uniqueness an open problem. Recently, this problem was settled in \cite{Khomrutai15}. It was found that the uniqueness is guaranteed provided that the initial condition is non-trivial, whereas all non-trivial solutions with the zero initial condition are shown to be the delays of the maximal solution $u_\ast=((1-p)t)^{1/(1-p)}$. It is remarkable that these are the same findings as that for the sublinear heat equation \cite{AguirEscobedo86}.\smallskip

Let us explain our main results. In this work, we extend the results of \cite{CaoYinWang09} and \cite{Khomrutai15}. The solutions to Eq.\ (\ref{Eqn:Main}) are studied in the non-local (or mild) formulation
\begin{align}\label{Eqn:MildOrig}
u(x,t)=\CG(t)u_0+\int_0^t\CG(t-\t)\CB\left(Vu^p\right)(x,\t)d\t,
\end{align}
where $\CB\triangleq (1-\Lap)^{-1}$ is the Bessel potential operator and $\CG(t)\triangleq e^{-t\CB\Lap}$ is the pseudoparabolic Green operator (see (\ref{Eqn:Green}). A part of this work is inspired by the work of J.\ Aguirre and M.A.\ Escobedo \cite{AguirEscobedo86} on the sublinear heat equation $\P_tu=\Lap u+u^p$, $0<p<1$. The techniques in that paper, like most other studies on nonlinear heat equations, rely heavily on the explicit formula of the heat kernel and the scaling (or self-similarity) property of the heat equaton. For our work on the pseudoparabolic equation, however, we cannot follow their techniques because of the complicated green kernel and that (\ref{Eqn:Main}) admits no obvious scaling transformations. Our study reveals that such difficulty can be overcome and what we really need are some qualitative properties (Lemma \ref{Lem:BasicBG}, Proposition \ref{Prop:BoundBG}) and asymptotic behaviors (Lemma \ref{Lem:B1}) of $\CB$ and $G$.  Additional difficulty arises from the unboundedness and time-varying of $V$ in the source term, which was not included in \cite{AguirEscobedo86}. We overcome this difficulty by analysing the problem in the continuous spaces 
\begin{align}
BC_a=BC_a(\R^n)\triangleq \left\{\vp\in C(\R^n):|\vp(x)|\leq\mbox{const}\cdot|x|^{a}\,\,\mbox{as $|x|\to\I$}\right\},
\end{align}
which is endowed with a weighted norm 
\begin{align}
\|\vp\|_{R,a}\triangleq \left\|\left(R^2+|\cdot|^2\right)^{-\frac{a}{2}}\vp\right\|_{L^\I},
\end{align}
where $R>0$ and $a\geq0$. By choosing $R$ sufficiently large, we can establish the existence result within $BC_a$. The reason for using the norm $\|\cdot\|_{R,a}$ instead of the usual $\|\cdot\|_{1,a}$ as in \cite{AguirEscobedo86} is that the operators $\CB$ and $\CG(t)$, as $t\to0^+$, behave `almost' as contractions on $BC_a$ provided $R$ is sufficiently large. See Proposition \ref{Prop:BoundBG} (ii). This property and the corresponding operator norm estimates play a crucial role throughout our study. \smallskip

Some important preliminaries are given in Section \ref{Sec:Prelim}. The global existence result of Eq.\ (\ref{Eqn:Main}) is proved in Section \ref{Sec:Glob}. We introduce a new dependent variable $w$ such that $w(\cdot,t)\in C_b(\R^n)=BC_0$ and study some approximate problems where the nonlinear term $w^p$, which is not Lipschitz continuous at $w=0$, is replaced by Lipschitz continuous functionals $F(w)$. The global existence and comparison principle for the approximate problems can be proved by standard contraction mapping and Gronwall type arguments. Then we prove the existence result of Eq.\ (\ref{Eqn:Main}) using the monotonicity property of the approximated solutions. We obtain the global existence result (Theorem \ref{Thm:Existence}) under the assumption that there is a continuous function $\L(t)\geq0$ such that $V$ satisfies
\begin{align}\label{HypVU}\tag{H1}
V(x,t)\lesssim\L(t)|x|^\S\quad\mbox{as $|x|\to\I$ and $t\to\I$, where $\S\in\R$},
\end{align}
and the solutions belong to $BC_a$ with
\begin{align}\label{HypAL}\tag{H2}
a\geq\frac{\S_+}{1-p}.
\end{align}
All the results in this work are applicable to many important potentials such as
\[
V\sim t^k|x|^\S,\quad(\log t)^\nu|x|^\S,\quad t^k(\log t)^\nu|x|^\S,
\]
as $|x|\to\I,t\to\I$, where $k,\nu,\S\in\R$. Observe that the sublinearity of the source term always allows the solutions to exist globally in time, no matter how the potential $V$ behaves at infinity and regardless of the initial condition $u_0$. This is in contrast to the superlinear case $p>1$, where it was shown in \cite{KhomrutaiNA15} that $V$ can induce the blowing up phenomena if $1<p\leq1+\frac{\S+2}{n}$ or $u_0$ is sufficiently large. The same phenomena occurred in the heat equations.\smallskip

We establish a lower bound for the grow-up (in time) and spatial (radial) growth of solutions in Section \ref{Sec:Grow}. The solutions of (\ref{Eqn:Main}) will be expressed in both (\ref{Eqn:MildOrig}) and a modified formulation
\begin{align}\label{Eqn:MildMod}
\mu(x,t)=u_0+\int_0^t\CB\mu(x,\t)d\t+\int_0^te^{(1-p)\t}\CB\left(V\mu^p\right)(x,\t)d\t,
\end{align}
which is obtained by formally setting 
\begin{align}
u=e^{-t}\mu
\end{align}
into (\ref{Eqn:Main}). We prove in Lemma \ref{Lem:TMisM} and Corollary \ref{Cor:PimM} that if $\mu$ solves (\ref{Eqn:MildMod}) then $u$ solves (\ref{Eqn:MildOrig}). By dropping the nonlinear term, denoted $\CN^s\mu$ (see also (\ref{NonLins})), in (\ref{Eqn:MildMod}) we can prove a preliminary estimate that $\mu(t_0)\geq C_0e^{-\D|x|}>0$, where $\D>1$ and $t_0>0$, provided $u_0\neq0$. Applying this lower estimate into $\mu\geq\CN^s\mu$, which is true by (\ref{Eqn:MildMod}), and performing iteration, we can derive the lower grow-up/growth bound for the solutions in Theorem  \ref{Thm:uniflower}. The main assumption for this result is that there is a continuous function $\L(t)>0$ such that $V$ satisfies
\begin{align}\label{HypVL}
V(x,t)\gtrsim\L(t)|x|^\S\quad\mbox{as $|x|\to\I$ for all $t>0$, where $\S\in J_n^+$},\tag{H3}
\end{align}
and 
\[
J_n^+\triangleq\{0\}\cup\left[(2-n)_+,\I\right)\quad(n\geq1);
\]
see (\ref{HypVLp}). An important consequence of this lower grow-up/growth bound is that when $V=\L(t)|x|^\S$, $\S>0$ and $\L(t)>0$ an arbitrary continuous function, the non-autonomous, unbounded sublinear source induces the growth on the solutions at least as $|x|^{\S/(1-p)}$, at any time $t>0$, even when the initial condition is decaying. It also induces a grow-up rate at least as $(\int_0^t\L(s)ds)^{1/(1-p)}t^{\S/(2(1-p))}$, see Theorem  \ref{Thm:uniflower} and Remark \ref{Rem:Grow}. This result is adding to the already observed phenomenon in \cite{AguirEscobedo86} and \cite{Khomrutai15} that the sublinear  source with constant potential, in the heat equation and the pseudoparabolic equation, induces the 
grow-up rate (in time) at least as $t^{1/(1-p)}$ on the solutions. 
\smallskip

Next, in Section \ref{Sec:CompU}, we prove the key comparison principle (Theorem \ref{Thm:Comp}) assuming that 
\begin{align}\tag{H4}\label{Hyp:Vcomp}
V(x,t)\sim\L(t)|x|^\S\quad\mbox{for all $x\in\R^n$, $t>0$, where $\L(t)>0,\S\in J_n^+$}
\end{align}
and $0<p<1$ is sufficiently small. More precisely, if $V$ satisfies (\ref{HypV}) where $0<\Uv\leq\Ov$ are constants, then the result of the theorem is true provided
\begin{align}\tag{H5}\label{HypCompOnp}
0<p<\V_0(\Ov/\U v),
\end{align}
where $\V_0>0$ is the constant given in (\ref{Est:BVd}); see Remark \ref{Rem:SharpComp} (i). In proving the comparison principle, we use the results in Proposition \ref{Prop:BoundBG} that $\CB$ and $\CG(t)$, as $t\to0^+$, behave almost as contractions and the lower estimate of the grow-up/growth of solutions with non-trivial initial condition. This comparison principle is optimal in view of Remark \ref{Rem:SharpComp} (ii). Some variations of the comparison theorem is also presented (Corollary \ref{Cor:Comp1}, Corollary \ref{Cor:Compare}). Using the comparison principle, the uniqueness of solutions (Theorem \ref{Thm:Unique}) for Eq.\ (\ref{Eqn:Main}) is established when the initial condition $u_0\neq0$. \smallskip

In Section \ref{Sec:Zero} we analyse Eq.\ (\ref{Eqn:Main}) when the initial condition $u_0=0$. That is we consider the problem
\begin{align}\label{Eqn:zero}
\begin{cases}
\P_tu-\Lap\P_tu=\Lap u+V(x,t)u^p&x\in\R^n,t>0,\\
\qquad\,\,\,u(\cdot,0)=0&x\in\R^n.
\end{cases}
\end{align}
We begin by introducing the notion of maximal solution, for a potential $V$, and establish its uniqueness (Theorem \ref{Thm:UniqueMax}). For each potential $V$, the maximal solution is denoted by $u_{\ast V}$. Under the assumption (\ref{HypNonUnq}), we can classify the non-trivial solutions of (\ref{Eqn:zero}) under various circumstances:
\begin{align*}
\begin{cases}
 u=\left((1-p)\int_0^{[t-\kappa]_+}\L(s+\kappa)ds\right)^{\frac{1}{1-p}}&\mbox{if $V=\L(t)\geq0$, a continuous function},\\
\vspace{-10pt}\\
\displaystyle u=u_{\ast V(x)}([t-\kappa]_+)&\mbox{if $V=V(x)$ satisfying (\ref{HypNonUnq})},\\
\vspace{-10pt}\\
\displaystyle u=u_{\ast V^\kappa}(x,[t-\kappa]_+)&\mbox{if $V=V(x,t)$ satisfying (\ref{Hyp:Convex})},
\end{cases}
\end{align*}
where $\kappa\geq0$ is a constant. See Theorem \ref{Thm:Classf1}, \ref{Thm:Classf2}, and \ref{Thm:Classf3}, respectively. It is remarkable that, in the first circumstance, we have obtained the explicit formula for all non-trivial solutions in terms of $\L(t)$. This is a generalization of the result obtained in (\cite{Khomrutai15}) when $\L=1$. When the potential $V$ is both $x$- and $t$-dependent, a convexity condition is required to be imposed on the potential. 
\smallskip

Finally, in Section \ref{Sec:Grow-up}, we study the asymptotic grow-up rate of solutions of (\ref{Eqn:Main}) under the assumption that $u_0$ has a radial growth $\sim|x|^a$ where $a\in\R$. More precisely, we assume that
\begin{align}\tag{H6}\label{HypSharp}
\liminf_{|x|\to\I}|x|^{-a}u_0(x)=l_1,\quad \limsup_{|x|\to\I}|x|^{-a}u_0(x)=l_2,
\end{align}
where $l_1,l_2\in(0,\I)$ are constants and $V$ satisfies. In Theorem \ref{Thm:a>ac}, \ref{Thm:a=a_c}, and \ref{Thm:a<ac}, we show that asymptotically as $t\to\I$,
\begin{align*}
\begin{cases}
\displaystyle l_1t^{\frac{a}{2}}\lesssim\|u(\cdot,t)\|_{R,a}\lesssim l_2t^{\frac{a}{2}}&\mbox{if $a>a_c$},\\
\vspace{-10pt}\\
\displaystyle l_1t^{\frac{a}{2}}\lesssim\|u(\cdot,t)\|_{R,a}\lesssim_\V l_2t^{\frac{a}{2}+\V}&\mbox{if $a=a_c$, for any $\V>0$},\\
\vspace{-10pt}\\
\displaystyle t^{\frac{a_c}{2}}\lesssim\|u(\cdot,t)\|_{R,a}\lesssim_\V l_2^\V t^{\frac{a_c}{2}+\V}&\mbox{if $a_0\leq a<a_c$, for any $\V>0$},
\end{cases}
\end{align*}
where
\begin{align*}
a_0=\frac{\S}{1-p},\quad a_c=\frac{\S+2(\nu+1)_+}{1-p}.
\end{align*}
It should be noted that, in the case $a<a_c$, the coefficient on the left estimate is independent of $l_1$ and that on the right is weakly depending on $l_2$. Thus, if the initial condition grows not too rapidly at infinity ($a<a_c$), then the grow-up of solutions to (\ref{Eqn:Main}) is, asymptotically, controlled totally by the non-autonomous unbounded sublinear source. On the other hand, at the critical growth exponent ($a=a_c$), the source and the initial condition are cooperated in controlling the asymptotic grow-up of solutions. Finally, if the initial condition grows sufficiently rapidly ($a>a_c$) the grow-up of solutions as $t\to\I$ is controlled totally by the initial condition. The effect of the third order viscous term $\Lap\P_tu$ is discussed in Remark \ref{Rem:VisEf}.  The result in this section is inspired by corresponding works on the sublinear heat equations, see for instance \cite{CazDickEscoWeiss01,FilaWinklerYanagida04,GuiNiWang01,WangYin12}.

\section{Preliminaries}\label{Sec:Prelim}

\subsection{Basic results, function spaces, and notation.} 

We will need the following basic facts.

\begin{lemma}\label{Technical1}
Let $0<p<1$.
\begin{enumerate}
\item[(1)] If $x,y\geq0$ then $|x^p-y^p|\leq|x-y|^p$.\vspace{5pt}
\item[(2)] If $g,\L\in C([0,T])$ with $g,\L\geq0$ satisfy
\begin{align*}
g(t)\leq\int_0^t\L(\t)g(\tau)^pd\tau\quad(t\in[0,T]),
\end{align*}
then 
\begin{align*}
g(t)\leq\left((1-p)\int_0^t\L(\t)d\t\right)^{\frac{1}{1-p}}\quad(t\in[0,T]).
\end{align*}
In particular, if $\L$ is a constant then $g(t)\leq((1-p)\L t)^{1/(1-p)}$. 
\end{enumerate}
\end{lemma}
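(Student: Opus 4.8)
The plan is to handle the two parts separately. Part (1) is an elementary inequality about the concavity of $t\mapsto t^p$. Assuming without loss of generality that $x\geq y\geq0$, I would set $f(t)=t^p$ and note that since $0<p<1$, $f$ is concave and $f(0)=0$, so $f$ is subadditive: $f(a+b)\leq f(a)+f(b)$ for $a,b\geq0$. Applying this with $a=x-y$ and $b=y$ gives $x^p=(x-y+y)^p\leq(x-y)^p+y^p$, i.e.\ $x^p-y^p\leq(x-y)^p=|x-y|^p$. Alternatively, one can argue by scaling: dividing through by $x^p$ (for $x>0$) reduces the claim to $1-s^p\leq(1-s)^p$ for $s=y/x\in[0,1]$, which follows from $1-s^p\le 1-s$ being false in general, so the cleaner route is the subadditivity argument just given, or a direct derivative check that $g(s)=(1-s)^p-1+s^p\geq0$ on $[0,1]$ since $g(0)=g(1)=0$ and $g$ is concave.

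For Part (2), this is a Gronwall-type (Bihari-type) argument for the sublinear integral inequality. The plan is to define $G(t)=\int_0^t\L(\t)g(\t)^p\,d\t$, so that $g(t)\leq G(t)$ and $G$ is nondecreasing, absolutely continuous, with $G(0)=0$ and $G'(t)=\L(t)g(t)^p\leq\L(t)G(t)^p$ a.e. The technical nuisance is that $G$ may vanish on an initial interval and $G^{-p}$ blows up there, so I would first treat the trivial region and then work where $G>0$. Concretely, on any interval where $G(t)>0$ one has
\begin{align*}
\frac{d}{dt}\left(G(t)^{1-p}\right)=(1-p)G(t)^{-p}G'(t)\leq(1-p)\L(t),
\end{align*}
and integrating from the last time $G$ vanishes (or from $0$) up to $t$ yields $G(t)^{1-p}\leq(1-p)\int_0^t\L(\t)\,d\t$. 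Hence $g(t)\leq G(t)\leq\big((1-p)\int_0^t\L(\t)\,d\t\big)^{1/(1-p)}$, which is exactly the claimed bound; on the region where $G\equiv0$ the bound holds trivially since then $g\equiv0$ there as well. The constant-$\L$ case is the immediate specialization.

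A slightly more robust alternative, which avoids the case distinction about where $G$ vanishes, is an $\V$-regularization: replace $G$ by $G_\V(t)=\V+\int_0^t\L g^p$, so $G_\V\geq\V>0$ everywhere, run the same computation to get $G_\V(t)^{1-p}\leq\V^{1-p}+(1-p)\int_0^t\L(\t)\,d\t$, and then let $\V\to0^+$. The main (and really the only) obstacle here is this degeneracy of $G^{-p}$ near zeros of $G$; everything else is a routine differentiation-and-integration. I expect Part (2) to be used repeatedly later (e.g.\ in deriving the maximal-solution formula $u_\ast$ and the grow-up estimates), so stating it with a general continuous $\L(t)\geq0$ is the right level of generality.
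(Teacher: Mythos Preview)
Your proposal is correct and follows essentially the same approach as the paper. The paper dispatches Part (1) as ``elementary calculus'' and Part (2) by a direct citation of Bihari's inequality with $F(x)=\int_0^x t^{-p}\,dt=x^{1-p}/(1-p)$; your argument for Part (2) simply unpacks the Bihari computation in this special case (including the $\V$-regularization to handle zeros of $G$), so the content is identical, just more self-contained.
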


\begin{proof}
Part (1) can be proved by elementary calculus. Part (2) follows directly from Bihari's inequality \cite{Bihari56} and that $F(x)= \int_0^xt^{-p}dt=x^{1-p}/(1-p)$ has the inverse $F^{-1}(x)=((1-p)x)^{1/(1-p)}$. 
\QED
\end{proof}

The phase spaces in this work are the spaces of {\it weighted continuous functions}:
\begin{align}
&BC_a=BC_a(\R^n)=\left\{\vp\in C(\R^n):|\vp(x)|\leq\mbox{const}\cdot|x|^{a}\,\,\mbox{as $|x|\to\I$}\right\},
\end{align}
where $a$ is any real number. $BC_a$ functions exhibit decay (or constant) at infinity if $a\leq0$, whereas they can have at most a power growth of order $a$ if $a>0$. It is easy to see that $BC_a\subsetneq BC_b$ for all $a<b$. The space $BC_a$ is endowed with a norm
\begin{align}
\|\vp\|_{R,a}=\left\|(R^2+|\cdot|^2)^{-\frac{a}{2}}\vp\right\|_{L^\I},
\end{align}
where $R>0$ is a constant. Any two norms $\|\cdot\|_{R,a}$, $\|\cdot\|_{\r,a}$ are equivalent, and $BC_a$ is a Banach space when equipped with any of these norms.

\begin{notation}\label{Not1}
Let $0<T\leq\I$, $a,d\in\R$, and $R,\r>0$. We denote
\begin{itemize}
\item[(i)] $\CZ_{T,a}=C\left([0,T);BC_a\right)$
\item[(ii)] $\displaystyle\|\vp\|_{\CZ_{T,a}}=\sup_{t\in[0,T)}\|\vp(\cdot,t)\|_{R,a}$
\item[(iii)] $Q_T=\R^n\times[0,T)$
\item[(iv)] $\LL_{\r,d}=(\r+|x|^2)^{\frac{d}{2}}$, $\LL=(R^2+|x|^2)^{-\frac{a}{2}}$
\item[(v)] $J_n=\{s\in\R:s(s+n-2)\geq0\}=\left(-\I,(2-n)_-\right]\cup\left[(2-n)_+,\I\right)$, $J_n^+=J_n\cap[0,\I)$, so
\begin{align*}
J_n=\begin{cases}
(-\I,0]\cup[1,\I)&\mbox{if $n=1$},\\
\R&\mbox{if $n=2$},\\
(-\I,2-n]\cup[0,\I)&\mbox{if $n\geq3$},
\end{cases}\quad J_n^+=\begin{cases}
\{0\}\cup[1,\I)&\mbox{if $n=1$},\\
[0,\I)&\mbox{if $n\geq2$}.
\end{cases}
\end{align*}
\item[(vi)] In our study of semigroup properties (see Section \ref{Sec:1.3}), we will use the following notation. If $w=w(x,t)$ and $\t\geq0$ we denote
\begin{align}\label{Not}
\begin{cases}
w^\tau(x,t)\triangleq w(x,t+\tau),\\
w^{-\t}(x,t)\triangleq w(x,[t-\t]_+),\\
w(\tau)\triangleq w(\cdot,\tau),\\
w^\ast\triangleq\CM w,\quad w^\dag\triangleq\CM^sw,
\end{cases}
\end{align}
where $\CM,\CM^s$ are defined by (\ref{Def:M}), (\ref{Def:Ms}) respectively. 
So $w(\t)$ is a function of $x$ and $w^\t,w^\ast,w^\dag$ are functions of $x,t$.
\item[(vii)] We also use the following standard notation: $f\lesssim g$ if $f\leq Cg$ for an (unimportant) constant $C>0$; $f\sim g$ if $f\lesssim g$ and $g\lesssim f$; $a_+=a\vee 0$ and $a_-=a\wedge0$ where $x\vee y=\max\{x,y\}$ and $x\wedge y=\min\{x,y\}$.
\end{itemize}


\end{notation}

\subsection{Bessel potential and (pseudoparabolic) Green operators}

Assuming all data are sufficiently regular, then Eq.\ (\ref{Eqn:Main}) is transformed via the Fourier transformation into the non-local equation
\begin{align*}
\P_tu=\CB\Lap u+\CB\left(Vu^p\right),\quad u|_{t=0}=u_0,
\end{align*}
where $\CB=(I-\Lap)^{-1}$ is the Bessel potential operator defined by 
\begin{align}
\CB u=\CF^{-1}\left(\frac{1}{1+|\xi|^2}\wh{u}\right)=B\ast u.
\end{align}
By the Duhamel's principle, then $u$ satisfies (\ref{Eqn:MildOrig}) where 
\begin{align}\label{Eqn:Green}
\begin{cases}
\displaystyle\CG(t)u=e^{-t}e^{t\CB}u=e^{-t}\sum_{k=0}^\I\frac{t^k}{k!}B_k\ast u=G(x,t)\ast u,\\
\displaystyle B_\nu(x)=\frac{2^{1-\nu}}{\Gm(\nu)}\left|x\right|^{\frac{2\nu-n}{2}}K_{\frac{n-2\nu}{2}}\left(\left|x\right|\right)\,\,\,\mbox{for all real $\nu>0$},
\end{cases}
\end{align}
and $K_\A$ is the modified Bessel function of the second kind. For more details, see \cite{AronSmith61,KaiNaumShish05,Karch99}.

\begin{lemma}\label{Lem:BasicBG}\

\begin{enumerate}
\item[(1)] $\CB$ and $\CG(t)$ are positive operators on $BC_a$.
\item[(2)] $\CB1=1$ and $\CG(t)1=1$ for all $t>0$.
\item[(3)] For each $d\in J_n$ (see Notation \ref{Not1} (v)), we have
\begin{align}\label{Est:BGd}
\CB\left(|x|^d\right)\geq|x|^d,\quad\CG(t)\left(|x|^d\right)\geq|x|^d.
\end{align}
\end{enumerate}

\end{lemma}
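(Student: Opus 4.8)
\textbf{Proof proposal for Lemma \ref{Lem:BasicBG}.}

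The plan is to prove the three parts in order, exploiting the convolution representations $\CB u = B\ast u$ and $\CG(t)u = G(\cdot,t)\ast u$ together with the positivity of the kernels $B$ and $G$.

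For part (1), I would first record that $B = B_1 \geq 0$ and $G(\cdot,t) \geq 0$ pointwise: $B_1$ is a positive multiple of $|x|^{(2-n)/2}K_{(n-2)/2}(|x|)$ and the modified Bessel function $K_\A$ is positive on $(0,\I)$, so $B_1 > 0$ on $\R^n\setminus\{0\}$; then $G(\cdot,t) = e^{-t}\sum_{k\geq0}\frac{t^k}{k!}B_k$ (with $B_0 = \delta$, interpreting $e^{t\CB}$ appropriately, or more carefully $B_k = B_1^{\ast k}$ for $k\geq 1$) is a sum of nonnegative terms, hence nonnegative. Consequently, if $\vp\in BC_a$ with $\vp\geq 0$, then $\CB\vp = B\ast\vp\geq 0$ and $\CG(t)\vp\geq 0$ wherever the convolutions converge; one should note that for $\vp\in BC_a$ the integral $\int B(y)\vp(x-y)\,dy$ converges because $B$ decays exponentially (the asymptotics of $K_\A$ give $B_1(x)\sim |x|^{(1-n)/2}e^{-|x|}$ as $|x|\to\I$) while $\vp$ grows at most polynomially, and similarly for $G$; I would cite Lemma \ref{Lem:B1} or the references \cite{AronSmith61,Karch99} for the integrability/mapping statement that $\CB,\CG(t):BC_a\to BC_a$.

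For part (2), the key point is that $\CB1 = 1$: since $\wh{1}$ corresponds (distributionally) to the symbol evaluated at $\xi = 0$, we have $\frac{1}{1+|\xi|^2}\big|_{\xi=0} = 1$, equivalently $\int_{\R^n} B(y)\,dy = 1$ (this is the standard normalization of the Bessel kernel, $\wh{B}(\xi) = (1+|\xi|^2)^{-1}$ so $\int B = \wh{B}(0) = 1$). Then $\CB 1 = \int B(y)\cdot 1\,dy = 1$. For $\CG(t)1$, using $\CG(t) = e^{-t}e^{t\CB}$ and $\CB 1 = 1$ we get $e^{t\CB}1 = \sum_k \frac{t^k}{k!}\CB^k 1 = \sum_k\frac{t^k}{k!}1 = e^t$, hence $\CG(t)1 = e^{-t}e^t = 1$; alternatively $\int G(y,t)\,dy = e^{-t}\sum_k\frac{t^k}{k!}\int B_k = e^{-t}\sum_k\frac{t^k}{k!} = 1$.

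For part (3), fix $d\in J_n$, so that $d(d+n-2)\geq 0$, which is exactly the condition that the radial function $|x|^d$ is subharmonic on $\R^n\setminus\{0\}$ (one computes $\Lap|x|^d = d(d+n-2)|x|^{d-2}\geq 0$). I would argue as follows: since $\Lap|x|^d\geq 0$, the function $v(x) = |x|^d$ satisfies $(I-\Lap)v = v - \Lap v \leq v$, and since $\CB = (I-\Lap)^{-1}$ is order-preserving (by part (1)) we get $v = \CB(I-\Lap)v \leq \CB v$, i.e. $\CB(|x|^d)\geq |x|^d$. Then for $\CG(t)$, iterating gives $\CB^k(|x|^d)\geq |x|^d$ for all $k\geq 0$, so $\CG(t)(|x|^d) = e^{-t}\sum_k\frac{t^k}{k!}\CB^k(|x|^d)\geq e^{-t}\sum_k\frac{t^k}{k!}|x|^d = |x|^d$. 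Alternatively one can invoke the semigroup generator: $\CG(t) = e^{-t\CB\Lap}$ and $\frac{d}{dt}\CG(t)v = -\CB\Lap\CG(t)v$; starting from $v$ subharmonic and using that $\CG(t)$ and $\CB$ preserve subharmonicity/positivity, $\CG(t)v$ is increasing in $t$, hence $\geq\CG(0)v = v$.

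The main obstacle, as usual with these integral kernels, is making the analytic manipulations in parts (2) and (3) rigorous for functions that merely lie in $BC_a$ rather than being Schwartz: one must justify the interchange of $(I-\Lap)$ with $\CB$ when applied to the (non-decaying, only locally-$C^2$) function $|x|^d$, which is singular at the origin when $d < 0$ or non-integer. I expect this to be handled by a cutoff/approximation argument — replace $|x|^d$ by a smoothed, truncated version $(\V + |x|^2)^{d/2}\chi_R$, verify the inequality there using the exponential decay of $B$ and $G$, and pass to the limit using the monotone/dominated convergence theorem together with the uniform-in-$\V,R$ bounds on $\CB$ and $\CG(t)$ on $BC_a$ from Proposition \ref{Prop:BoundBG}. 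The positivity and normalization facts themselves are standard properties of the Bessel potential (see \cite{AronSmith61,Karch99}), so the burden is almost entirely bookkeeping about admissible function classes.
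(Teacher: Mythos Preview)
Your proposal is correct and follows essentially the same approach as the paper: part (3) is proved by computing $(I-\Lap)|x|^d = |x|^d - d(d+n-2)|x|^{d-2} \leq |x|^d$ for $d\in J_n$ and then applying the positive operator $\CB$, with the $\CG(t)$ estimate following immediately. The paper treats parts (1) and (2) as obvious or known and does not discuss the approximation/cutoff issues you raise; your extra care there is reasonable but not required by the paper's level of rigor.
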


\begin{proof}
Part (1) is obvious and (2) can be found in \cite{Khomrutai15}.\smallskip


(3) As usual, let $r=|x|$. We have by direct calculation that
\begin{align*}
r^d-\Lap(r^d)&=r^d-(r^d)''-\frac{n-1}{r}(r^d)',\nonumber\\
&=r^{d-2}\left(r^2-d(d+n-2)\right)\leq r^d.
\end{align*}
Applying $\CB$ and using (1), we obtain $\CB(r^d)\geq r^d$. The second assertion is now immediate.\QED

\end{proof}

\begin{remark}
If $d\in\R\setminus J_n=((2-n)_-,(2-n)_+)$, then the estimates in (\ref{Est:BGd}) are reverse. 

\end{remark}

We prove a key result about the boundedness of $\CB$ and $\CG(t)$ on $BC_a$.

\begin{proposition}\label{Prop:BoundBG}
Let $\theta\in(0,1)$, $R>0$, $a\in\R$, and $\LL=(R^2+|x|^2)^{-\frac{a}{2}}$.
\begin{enumerate}
\item[(1)] If $a\leq0$ and $(1-\th)R^2\geq-a(n-(a+2)_-)$, then 
\begin{align}
\CB\LL\leq\theta^{-1}\LL.
\end{align}
\item[(2)] If $a\geq0$ and $(1-\th)R^2\geq a(n+(a-2)_+)$, then we have 
\begin{align}
\|\CB\vp\|_{R,a}&\leq\theta^{-1}\|\vp\|_{R,a},\quad
\|\CG(t)\vp\|_{R,a}\leq \B(t)\|\vp\|_{R,a},\quad\B(t)\triangleq e^{-(1-\theta^{-1})t},
\end{align}
for all $\vp\in BC_a$.

\end{enumerate}
\end{proposition}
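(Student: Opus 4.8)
The proof rests on two facts from Lemma~\ref{Lem:BasicBG}: $\CB=(I-\Lap)^{-1}$ is order-preserving and $\CB1=1$, so a pointwise elliptic inequality for a weight upgrades to an inequality for its Bessel potential. For part~(1) I would argue as follows. Since $\LL$ and $(I-\Lap)\LL$ are smooth with polynomial growth and the Bessel kernel $B$ has exponential decay, the identity $\CB\big((I-\Lap)\phi\big)=\phi$ is valid for $\phi=\theta^{-1}\LL$, hence
\[
\theta^{-1}\LL-\CB\LL=\CB\Big(\theta^{-1}(I-\Lap)\LL-\LL\Big).
\]
By positivity of $\CB$ it therefore suffices to prove the pointwise inequality $\theta^{-1}(I-\Lap)\LL\geq\LL$, equivalently $\Lap\LL\leq(1-\theta)\LL$.

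Differentiating $\LL=(R^2+r^2)^{-a/2}$ (with $r=|x|$) twice and collecting terms yields
\[
\Lap\LL=(R^2+r^2)^{-\frac{a}{2}-2}\big(a(a+2-n)r^2-anR^2\big),
\]
so $\Lap\LL\leq(1-\theta)\LL$ becomes the scalar inequality $a(a+2-n)r^2-anR^2\leq(1-\theta)(R^2+r^2)^2$ for all $r\geq0$. For $a\leq0$, put $c=-a\geq0$ and $u=r^2$, so that this reads $(1-\theta)(R^2+u)^2\geq c(c+n-2)\,u+cnR^2$; the right-hand side is affine in $u$ while the left-hand side is a convex quadratic in $u$, so the inequality holds on $[0,\I)$ as soon as it holds together with its $u$-derivative at $u=0$, i.e. as soon as $(1-\theta)R^2\geq cn$ and $2(1-\theta)R^2\geq c(c+n-2)$. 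A short split into $a\geq-2$ (so $c\leq2$, where $(1-\theta)R^2\geq cn=-an$ already forces the derivative condition since $2cn\geq c(c+n-2)$) and $a<-2$ (where the hypothesis gives directly $(1-\theta)R^2\geq c(n+c-2)\geq cn$) shows that the stated condition $(1-\theta)R^2\geq-a\big(n-(a+2)_-\big)$ is exactly what is needed, proving $\CB\LL\leq\theta^{-1}\LL$.

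For part~(2), observe that $\|\vp\|_{R,a}=\|\LL\vp\|_{L^\I}$ gives $|\vp|\leq\|\vp\|_{R,a}\,\LL^{-1}$ with $\LL^{-1}=(R^2+|x|^2)^{a/2}$, a weight of the type covered by part~(1) with exponent $-a\leq0$; positivity of $\CB$ then yields $|\CB\vp|\leq\|\vp\|_{R,a}\,\CB(\LL^{-1})\leq\theta^{-1}\|\vp\|_{R,a}\,\LL^{-1}$, i.e. $\|\CB\vp\|_{R,a}\leq\theta^{-1}\|\vp\|_{R,a}$, once one checks that the hypothesis $(1-\theta)R^2\geq a(n+(a-2)_+)$ is precisely the condition of part~(1) applied to $\LL^{-1}$. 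Iterating, $\|\CB^k\vp\|_{R,a}\leq\theta^{-k}\|\vp\|_{R,a}$, and since $\CG(t)=e^{-t}\sum_{k\geq0}\frac{t^k}{k!}\CB^k$ by (\ref{Eqn:Green}), the series converges absolutely in $BC_a$ and
\[
\|\CG(t)\vp\|_{R,a}\leq e^{-t}\sum_{k\geq0}\frac{t^k}{k!}\theta^{-k}\|\vp\|_{R,a}=e^{-(1-\theta^{-1})t}\|\vp\|_{R,a}=\B(t)\|\vp\|_{R,a}.
\]

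The only genuinely delicate point is the scalar inequality in the second step together with keeping the constant in the hypothesis sharp: the crude estimate $a(a+2-n)r^2-anR^2\lesssim(R^2+r^2)$ wastes too much, so one must treat the expression honestly as a quadratic in $r^2$ and run the case analysis on the signs of $a+2-n$, $(a+2)_-$ and $(a-2)_+$. Everything else — the positivity reduction, the Laplacian computation, and the passage to $\CG(t)$ through the exponential series — is routine.
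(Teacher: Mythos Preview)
Your proof is correct and follows the same strategy as the paper: compute the radial Laplacian to show $\LL-\Lap\LL\geq\theta\LL$ pointwise, apply $\CB$, and for part~(2) dualize to $\LL^{-1}$ and sum the series for $\CG(t)$. The only cosmetic difference is in verifying the scalar inequality: where you set it up as a quadratic versus an affine function in $u=r^2$ and invoke a tangent-line argument, the paper writes it as $1+an\VS^{-1}-a(a+2)r^2\VS^{-2}\geq\theta$ (with $\VS=R^2+r^2$) and uses directly that $\VS^{-1},\,r^2\VS^{-2}\in[0,R^{-2}]$, which sidesteps your case split.
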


\begin{proof}
(1) Let us denote $\LL=\VS^{-a/2}$ where $\VS=R^2+r^2$. Then we have
\begin{align*}
\LL-\Lap\LL&=\LL-\LL''-\frac{n-1}{r}\LL',\nonumber\\
&=\LL\left(1+an\VS^{-1}-a(a+2)r^2\VS^{-2}\right),\nonumber\\
&\geq\LL\left(1+anR^{-2}-\left(a(a+2)_-\right)R^{-2}\right)\geq\th\LL,
\end{align*}
where we have used that $a\leq0$, $0\leq\VS^{-1}\leq R^{-2}$, $0\leq r^2\VS^{-2}\leq\VS^{-1}\leq R^{-2}$, and the assumption on $R$. Taking $\CB$, we get $\LL=\CB(\LL-\Lap\LL)\geq\theta\CB\LL$ hence (1) is true.\smallskip

(2) Since $\|\vp\|_{R,a}=\|\LL\vp\|_{L^\I}$, we have
\begin{align*}
\CB\vp=\CB\left(\LL^{-1}\LL\vp\right)\leq\CB\left(\LL^{-1}\|\LL\vp\|_{L^\I}\right)=\CB(\LL^{-1})\|\vp\|_{R,a}.
\end{align*}
Since $a\geq0$, we get by (1) that $\CB(\LL^{-1})\leq\theta^{-1}\LL^{-1}$ provided $(1-\th)R^2\geq a(n-(-a+2)_+)=a(n+(a-2)_-)$. Thus
\begin{align*}
&\|\CB\vp\|_{R,a}=\left\|\LL\CB\vp\right\|_{L^\I}\leq\theta^{-1}\|\vp\|_{R,a},\\
&\|\CG(t)\vp\|_{R,a}\leq e^{-t}e^{t\|\CB\|}\|\vp\|_{R,a}\leq e^{-t+\frac{t}{\th}}\|\vp\|_{R,a},
\end{align*}
which are the desired estimates in (2).\QED
\end{proof}

\begin{remark}

\begin{enumerate}
\item[(i)] Using Proposition \ref{Prop:TwoSB} (1) and the same argument as above, it can be shown that 
\begin{align*}
\|\CG(t)\vp\|_{R,a}\leq(1+ct)^{\frac{a}{2}}\|\vp\|_{R,a},
\end{align*} 
for some constant $c=c(n,a)>0$, provided $R$ is large enough. This estimate is of course sharper than the one given in the proposition. For our work, however, the latter is enough.
\item[(ii)] If $a\in[2-n,0)$, of course $n\geq3$, then $\LL^{-1}-\Lap(\LL^{-1})\geq\LL^{-1}$, hence $\CB(\LL^{-1})\leq\LL^{-1}$. This implies
\[
\|\CB\vp\|_{R,a}\leq\|\vp\|_{R,a}.
\]
It is now clear that $\CG(t)(\LL^{-1})\leq\LL^{-1}$. Then $\|\CG(t)\vp\|_{R,a}
\leq\|\LL\CG(t)(\LL^{-1})\|_{L^\I}\|\vp\|_{R,a}$ and hence
\[
\|\CG(t)\vp\|_{R,a}\leq\|\vp\|_{R,a}.
\]
These two boundedness results are true for all $R>0$.
\item[(iii)] In our recent work \cite{KhomrutaiNA15} on superlinear pseudoparabolic equations (\ref{Eqn:Main}) with $p>1$, we have established the boundedness of $\CB$ and $\CG(t)$ on {\it weighted Lebesgue spaces}
\begin{align*}
L^{q,a}=\{\vp\in L^1_{loc}(\R^n):\|\vp\|_{L^{q,a}}\triangleq \|(1+|\cdot|^2)^{a/2}\vp\|_{L^q}<\I\}
\end{align*}
where $q\in[1,\I]$, $a\in\R$. The proof is rather involved. It was shown that
\begin{align*}
&\|\CB\vp\|_{L^{q,a}}\leq C_{n,a}\|\vp\|_{L^{q,a}},\\
&\|\CG(t)\vp\|_{L^{q,a}}\leq C_{n,q,a}\lng t\rng^{\frac{|a|}{2}}\|\vp\|_{L^{q,a}}\quad(t>0).
\end{align*}
This result, of course, generalizes Proposition \ref{Prop:BoundBG} (2). For the present investigation of sublinear equations, $BC_a$ is a more natural choice of function spaces the same as in the study of sublinear heat equation (\cite{AguirEscobedo86}). A novel idea in this work is that by using $\|\cdot\|_{R,a}$ instead of the usual norm $\|\cdot\|_{R=1,a}$, we gain a control over the operator norms of $\CB$ and $\CG(t)$. As was shown above, these linear operators behave almost as contractions when $\th\to1$ by choosing $R\gg1$.
\end{enumerate}

\end{remark}



We have the following pointwise lower estimate for the Bessel kernel.

\begin{lemma}\label{Lem:B1}

Let $n\geq1$. Then we have
\begin{align}
B(x)\geq b_0\phi_0(x)e^{-|x|}\quad\mbox{for all $x\in\R^n$},
\end{align}
where $b_0>0$ is a constant and
\begin{align}
\phi_0(x)=\begin{cases}
|x|^{\frac{1-n}{2}}&\mbox{if $n\neq2$ or $|x|\geq1$},\\
1-\ln|x|&\mbox{if $n=2$ and $|x|<1$}.
\end{cases}
\end{align}

\end{lemma}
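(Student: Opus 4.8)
The plan is to obtain the lower bound for the Bessel kernel $B(x) = \CF^{-1}\bigl(1/(1+|\xi|^2)\bigr)$ directly from the explicit formula for $B_\nu$ with $\nu = 1$, namely
\[
B(x) = B_1(x) = \frac{2^{0}}{\Gm(1)}|x|^{\frac{2-n}{2}}K_{\frac{n-2}{2}}\bigl(|x|\bigr) = |x|^{\frac{2-n}{2}}K_{\frac{n-2}{2}}\bigl(|x|\bigr),
\]
so the task reduces entirely to a lower bound for the modified Bessel function $K_\alpha(r)$ with $\alpha = (n-2)/2$. First I would recall the standard integral representation and known asymptotics of $K_\alpha$: as $r \to \infty$ one has $K_\alpha(r) \sim \sqrt{\pi/(2r)}\,e^{-r}$ for every fixed $\alpha \geq 0$, while as $r \to 0^+$ one has $K_\alpha(r) \sim \tfrac12\Gm(\alpha)(r/2)^{-\alpha}$ for $\alpha > 0$ and $K_0(r) \sim -\ln r$. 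Both asymptotics are uniform in the sense needed, and $K_\alpha$ is continuous and strictly positive on $(0,\infty)$, so on any compact subinterval away from $0$ and $\infty$ it is bounded below by a positive constant.

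The key steps, in order, are: (1) split $\R^n$ into the regions $|x| \geq 1$ and $|x| < 1$; (2) for $|x| \geq 1$, use that $e^{r/2}K_\alpha(r)$ is bounded below on $[1,\infty)$ — this follows because it tends to a positive limit (up to the $r^{-1/2}$ factor) at infinity and is continuous and positive on $[1,\infty)$ — to get $K_{(n-2)/2}(r) \geq c\, r^{-1/2} e^{-r}$, hence $B(x) = |x|^{(2-n)/2}K_{(n-2)/2}(|x|) \geq c\,|x|^{(2-n)/2}|x|^{-1/2}e^{-|x|} = c\,|x|^{(1-n)/2}e^{-|x|}$, which is exactly $c\,\phi_0(x)e^{-|x|}$ in this region for all $n$; (3) for $|x| < 1$ and $n \neq 2$, use the near-origin asymptotic $K_{(n-2)/2}(r) \geq c\, r^{-|n-2|/2}$ (valid since $(n-2)/2 \neq 0$ and $K_{-\alpha} = K_\alpha$), combine with $e^{-|x|} \geq e^{-1}$ on the unit ball, and check that $|x|^{(2-n)/2}|x|^{-|n-2|/2}$ equals $|x|^{(1-n)/2}$ when $n \geq 3$ (both exponents $(2-n)/2 - (n-2)/2 = 2-n \geq \min$... wait, rather: $|x|^{(2-n)/2 - (n-2)/2} = |x|^{2-n}$ — this is $\geq |x|^{(1-n)/2}$ only if $2-n \leq (1-n)/2$, i.e. $n \geq 3$, which holds) and when $n = 1$ one has $K_{-1/2}(r) = K_{1/2}(r) = \sqrt{\pi/(2r)}e^{-r}$ explicitly, giving $B(x) = |x|^{1/2}\sqrt{\pi/(2|x|)}e^{-|x|} = \sqrt{\pi/2}\,e^{-|x|}$, which dominates $c\,|x|^0 e^{-|x|}$; (4) for $|x| < 1$ and $n = 2$, $\alpha = 0$ and $B(x) = K_0(|x|)$; since $K_0(r) + \ln r \to \ln 2 - \gamma$ as $r \to 0^+$ and $K_0$ is positive, one gets $K_0(r) \geq c(1 - \ln r)$ on $(0,1)$ for a suitable $c > 0$, and since $e^{-|x|} \leq 1$ this yields $B(x) \geq c(1-\ln|x|) \geq c(1-\ln|x|)e^{-|x|}$. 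Taking $b_0$ to be the minimum of the finitely many constants produced gives the claim.

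The main obstacle is purely bookkeeping: keeping track of the sign of the exponent $(n-2)/2$ across the cases $n=1$, $n=2$, $n \geq 3$, and making sure the power of $|x|$ coming from $|x|^{(2-n)/2}$ times the near-origin behavior of $K_{(n-2)/2}$ collapses to exactly $\phi_0$ (or something larger) in each regime; the $n=1$ and $n=2$ cases must be handled separately because the generic asymptotic $K_\alpha(r) \sim \tfrac12\Gm(\alpha)(r/2)^{-\alpha}$ degenerates at $\alpha = 0$ and is suboptimal at half-integer $\alpha$. No delicate estimate is needed beyond the classical monotonicity/positivity of $K_\alpha$ and its two-sided asymptotics, both of which are standard (see the references cited, e.g.\ \cite{AronSmith61}); the continuity-plus-limit argument on compact intervals handles the "middle" region $|x|$ bounded away from $0$ and $\infty$ without any explicit constant.
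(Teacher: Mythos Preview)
Your proof is correct and follows essentially the same route as the paper's: split into $|x|\geq1$ and $|x|<1$, use the large-$r$ asymptotic $K_\alpha(r)\sim\sqrt{\pi/(2r)}\,e^{-r}$ (equivalently $B\sim r^{(1-n)/2}e^{-r}$) plus positivity/continuity for the outer region, and the small-$r$ asymptotic of $K_\alpha$ (equivalently the known near-origin behavior $B\sim\xi(r)$) for the inner region, then check that the inner bound dominates $\phi_0 e^{-|x|}$ case by case in $n$. The only slip is the phrase ``$e^{r/2}K_\alpha(r)$ is bounded below'' --- you mean $r^{1/2}e^{r}K_\alpha(r)$, as your stated conclusion $K_{(n-2)/2}(r)\geq c\,r^{-1/2}e^{-r}$ makes clear; the paper shortcuts your derivation slightly by quoting the asymptotics of $B$ itself from \cite{AronSmith61,KhomrutaiNA15} rather than going through $K_\alpha$, but the argument is otherwise identical.
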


\begin{proof}
This lemma has already appeared in \cite{Khomrutai15}, but for completeness we present a proof. It is well-known (see \cite{AronSmith61}, Lemma 4 in \cite{KhomrutaiNA15}) that $B$ is radial, strictly decreasing in $r=|x|$, and
\begin{align*}
B\sim\begin{cases}
r^{\frac{1-n}{2}}e^{-r}&\mbox{as $r\to\I$}\\
\xi(r)&\mbox{as $r\to0$}
\end{cases}\qquad\mbox{where}\quad\xi(r)=\begin{cases}
1&\mbox{if}\,\,n=1,\\
1-\ln r&\mbox{if}\,\,n=2,\\
r^{2-n}&\mbox{if}\,\,n\geq3.
\end{cases}
\end{align*}
Thus we can choose $C_1,C_2>0$ so that $B\geq C_1r^{\frac{1-n}{2}}e^{-r}$ for all $r\geq1$ and $B\geq C_2\xi(r)$ for all $0<r<1$.\smallskip

The desired estimate is trivial when $n=2$. If $n\neq2$, one can verify directly that $\xi\geq r^{\frac{1-n}{2}}e^{-r}$ for all $0<r<1$. Then by taking $C=\min\{C_1,C_2\}$, we obtain the desired estimate as well.\QED
\end{proof}

\subsection{Mild solutions, (mild) super and subsolutions; semigroup properties}\label{Sec:1.3}

We specify the notion of solutions considered in this work. 

\begin{definition}

Let $0<T\leq\I$ and $a\in\R$. By a positive \textbf{(mild) solution} of (\ref{Eqn:Main}) we mean $0\leq u\in \CZ_{T,a}$ which satisfies (\ref{Eqn:MildOrig}), or equivalently, 
\begin{align}
u=\CM u\quad\mbox{on $Q_T$},
\end{align}
where
\begin{align}\label{Def:M}
\CM u=\CM_{(u_0,V)}u\triangleq\CG(t)u_0+\int_0^t\CG(t-\t)\CB\left[Vu^p\right](x,\t)d\t.
\end{align}
In the case $T=\I$, $u$ is said to be a \textbf{global solution}.

\end{definition}

We will also need the following variant of the above definition. By formally setting 
\begin{align}
u=e^{-t}\mu
\end{align}
into (\ref{Eqn:Main}), we get the equation $\P_t\mu-\Lap\P_t\mu=\mu+e^{(1-p)t}V(x,t)\mu^p$
and the non-local equation
\begin{align}
\P_t\mu=\CB\mu+e^{(1-p)t}\CB\left[V\mu^p\right].
\end{align}

\begin{definition}

A function $u$ is said to be a positive \textbf{s-(mild) solution} of (\ref{Eqn:Main}) if $\mu\triangleq e^tu\in\CZ_{T,a}$ for some $0<T\leq\I$ and $a\in\R$, and $\mu$ satisfies (\ref{Eqn:MildMod}), or equivalently, 
\begin{align}
\mu=\CM^s\mu\quad\mbox{on $Q_T$},
\end{align}
where
\begin{align}\label{Def:Ms}
\CM^s\mu=\CM^s_{(u_0,V)}\mu\triangleq u_0+\int_0^t\CB \mu(\t)d\t+\int_0^te^{(1-p)\t}\CB\left[V\mu^p\right](x,\t)d\t.
\end{align}

\end{definition}

\begin{remark}
Under the assumptions in this work, nontrivial solutions to Eq.\ (\ref{Eqn:Main}) exhibit a {\it regularization effect} in the sense that, for any $t>0$, the solutions $u(t)\in BC_a$ with $a\geq0$, even if the initial condition $u_0\in BC_b$ with $b<0$, i.e.\ $u(t)$ is growing (in $x$) whereas $u_0$ is decaying! In view of that $BC_b\subset BC_a$, so the underlying function spaces, in the above definitions, can be taken to be $\CZ_{T,a}$ with $a\geq0$.
\end{remark}

\begin{remark}
\begin{enumerate}
\item[(i)] In the case that the data $V,u_0$ are sufficiently regular, e.g.\ in H\"older classes, and $V=V(x)$, it was shown in \cite{KhomrutaiNA15} every mild solution of the pseudoparabolic equation (\ref{Eqn:Main}) is classical. Thus the above two definitions coincide in this case.
\item[(ii)] It will be shown that the notion of s-mild solutions is `stronger' in the sense that every s-mild solution of (\ref{Eqn:Main}) is a mild solution. See Lemma \ref{Lem:TMisM} below.
\item[(ii)]  (Mild) \textit{supersolutions} (respectively, \textit{subsolutions}) are defined by requiring
\begin{align}
u\geq \CM u\quad\mbox{on $Q_T$}\quad (\mbox{resp.},\,\,u\leq\CM u\quad\mbox{on $Q_T$}).
\end{align}
We define s-(mild) super and subsolutions in a similar fashion using $\CM^s$.
\end{enumerate}

\end{remark}

\begin{lemma}\label{Lem:TMisM}

Let $0\leq u\in\CZ_{T,a}$ where $0<T\leq\I$ and $a\geq0$.
\begin{enumerate}
\item[(1)] If $u$ is an s-supersolution of (\ref{Eqn:Main}), then $u$ is a supersolution.
\item[(2)] If $u$ is an s-subsolution of (\ref{Eqn:Main}), then $u$ is a subsolution.
\end{enumerate}

\end{lemma}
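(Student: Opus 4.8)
The plan is to convert the $\CM^s$-inequality into an $\CM$-inequality by explicitly resumming the series that defines $\CG(t)=e^{-t}e^{t\CB}$. First I would recall the starting point: if $u$ is an s-supersolution, then with $\mu=e^tu$ we have $\mu\geq\CM^s\mu$ on $Q_T$, i.e.
\begin{align*}
e^tu(t)\geq u_0+\int_0^te^\t\CB u(\t)\,d\t+\int_0^te^{(1-p)\t}\CB\left[Vu^p e^{p\t}\right](\t)\,d\t,
\end{align*}
where I have used $\mu(\t)=e^\t u(\t)$ and $\mu^p=e^{p\t}u^p$, so that $e^{(1-p)\t}\CB[V\mu^p]=e^\t\CB[Vu^p]$. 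Thus the inequality reads $e^tu(t)\geq u_0+\int_0^te^\t\big(\CB u(\t)+\CB[Vu^p](\t)\big)\,d\t$. The key observation is that the right-hand side has the structure of an integrated form of the linear equation $\P_t\mu=\CB\mu+(\text{source})$, whose solution operator is precisely $e^{-t}e^{t\CB}=\CG(t)e^t$ — a Gr\"onwall-type comparison for the vector-valued (operator) ODE, using positivity of $\CB$ from Lemma \ref{Lem:BasicBG}(1).

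The core step is to prove the following comparison: if $v\in\CZ_{T,a}$, $v\geq0$, $h\in\CZ_{T,a}$, $h\geq0$, and $v(t)\geq h(t)+\int_0^t\CB v(\t)\,d\t$ on $Q_T$, then $v(t)\geq h(t)+\int_0^t\CG(t-\t)\CB h(\t)\,d\t$ — or more precisely the right form matching Duhamel. I would establish this by iteration: substitute the inequality into itself repeatedly. After $k$ substitutions one obtains $v(t)\geq \sum_{j=0}^{k}\frac{1}{j!}\int_0^t (t-\t)^j \CB^{\,j}(\text{appropriate term})\,d\t + (\text{remainder})$, where each $\CB^j$ acts on nonnegative quantities so every term is nonnegative and the partial sums are monotone in $k$. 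Matching this against the expansion $\CG(t-\t)=e^{-(t-\t)}\sum_{j\geq0}\frac{(t-\t)^j}{j!}\CB^j$ from (\ref{Eqn:Green}), together with the factor $e^\t$ appearing inside the integral, gives exactly $\int_0^te^\t\cdot e^{-(t-\t)}\sum\frac{(t-\t)^j}{j!}\CB^j(\cdots)=e^t\int_0^t\CG(t-\t)(\cdots)$ after pulling out $e^t$; one also needs $\CG(t)u_0$ to emerge from the constant term $u_0$, which it does since $e^{-t}e^{t\CB}u_0=\CG(t)u_0$. Care must be taken that the series converges in $\|\cdot\|_{\CZ_{T,a}}$ on compact time intervals: this follows from Proposition \ref{Prop:BoundBG}(2), $\|\CB\vp\|_{R,a}\leq\th^{-1}\|\vp\|_{R,a}$ (choosing $R$ large), so $\|\CB^j\vp\|_{R,a}\leq\th^{-j}\|\vp\|_{R,a}$ and $\sum_j\frac{(t-\t)^j}{j!}\th^{-j}$ converges. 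Dividing the resulting inequality by $e^t$ yields $u(t)\geq\CG(t)u_0+\int_0^t\CG(t-\t)\CB[Vu^p](\t)\,d\t=\CM u$, which is the claim. Part (2) is identical with all inequalities reversed, noting that reversing is legitimate because $\CB$ and $\CG(t)$ are positivity-preserving and the remainder terms stay nonnegative.

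The main obstacle I anticipate is the bookkeeping in the iteration: tracking how the $e^\t$ weights combine with the $(t-\t)^j/j!$ kernels to reproduce exactly $e^{-(t-\t)}$ inside $\CG(t-\t)$, and ensuring the remainder term after $k$ iterations tends to $0$ in $\CZ_{T',a}$ for each $T'<T$ (or at least has a nonnegative liminf, which suffices for the one-sided inequality). This is where the operator-norm control from Proposition \ref{Prop:BoundBG} is essential — without the near-contraction estimate (valid for $R\gg1$, which is harmless since norms for different $R$ are equivalent) one could not guarantee convergence of the rearranged series. A cleaner alternative, which I would mention, is to differentiate formally: set $\Phi(t)=e^{-t}e^{t\CB}$-conjugate of the defect, show that $t\mapsto \CG(t_0-t)\big[\mu(t)-u_0-\int_0^t\CB\mu-\int_0^te^{(1-p)\t}\CB[V\mu^p]\big]$ is monotone using $\frac{d}{dt}\CG(t_0-t)=-\CG(t_0-t)\CB+\CG(t_0-t)$ (valid weakly on $BC_a$), then evaluate at $t=0$ and $t=t_0$; but making this rigorous in the weighted continuous setting still reduces to the same series manipulation, so the iteration approach is the honest route.
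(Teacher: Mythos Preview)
Your approach is essentially the same as the paper's: rewrite the $\CM^s$-inequality in terms of $u$ as $u\gtrless e^{-t}u_0+\CF u+\CF(Vu^p)$ with $\CF\vp=\int_0^te^{-(t-\t)}\CB\vp\,d\t$, iterate, and identify the resulting series with $\CG(t)u_0+\int_0^t\CG(t-\t)\CB[Vu^p]\,d\t$ via $\CF^k(e^{-t}u_0)=e^{-t}\frac{t^k}{k!}\CB^ku_0$ and the analogous Fubini computation for the source term.

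One point needs correction. Your closing sentence ``Part (2) is identical with all inequalities reversed \ldots\ the remainder terms stay nonnegative'' is not right, and this is precisely where Parts (1) and (2) differ. In the supersolution case the iteration gives $u\geq I_k$ with $I_k\uparrow\CM u$, and no remainder appears (you may simply drop $\CF u\geq0$ at the first step and then feed $u\geq I_{k-1}$ back in). In the subsolution case the iteration gives $u\leq I_k+\CF^{k+1}u$, and the nonnegativity of $\CF^{k+1}u$ is on the \emph{wrong} side; you must show $\CF^{k+1}u\to0$. The paper does this exactly as you anticipated earlier in your write-up: fix $t_0\in(0,T)$, choose $\th\in(0,1)$ with $\gamma:=\th^{-1}(1-e^{-t_0})<1$, take $R$ large per Proposition \ref{Prop:BoundBG}(2), and estimate $\|\CF\vp\|_{\CZ_{t_0,a}}\leq\gamma\|\vp\|_{\CZ_{t_0,a}}$, whence $\|\CF^{k+1}u\|_{\CZ_{t_0,a}}\leq\gamma^{k+1}\|u\|_{\CZ_{t_0,a}}\to0$. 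So your invocation of Proposition \ref{Prop:BoundBG} is needed for (2), not for (1), and the reason is the remainder vanishing, not its sign.
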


\begin{proof}
A proof for part (1) can be found in \cite{Khomrutai15}. However, we sketch it again here for completeness sake and also because its idea will be used in proving part (2).\smallskip

(1) Recall $\CB$ is a positive operator. Setting $\mu=e^tu$ into the inequality $\mu\geq\CM^s\mu$, then $u$ satisfies
\begin{align}\label{MMM0}
u\geq e^{-t}u_0+\int_0^te^{-(t-\t)}\CB u(\t)d\t+\int_0^te^{-(t-\t)}\CB\left[Vu^p\right](\t)d\t
=v+\CF u+\CF w,
\end{align}
where 
\begin{align}
v=e^{-t}u_0,\quad \CF\vp\triangleq\int_0^te^{-(t-\t)}\CB\vp\,d\t,\quad\mbox{and}\quad w=Vu^p.
\end{align}
Since $\CB u\geq0$, we have $u\geq I_0\triangleq v+\CF w$. By iterating (\ref{MMM0}), $u\geq I_1\triangleq I_0+\CF I_0=(1+\CF)v+(\CF^2+\CF)w$, $u\geq I_0+\CF I_1=(1+\CF+\CF^2)v+(\CF^3+\CF^2+\CF)w\triangleq I_2$, and generally
\begin{align*}
u&\geq I_k\triangleq I_0+\CF I_{k-1}\nonumber\\
&=(1+\CF+\cdots+\CF^k)v+(\CF^{k+1}+\cdots+\CF)w\quad(k\geq1).
\end{align*}
We calculate $\CF^kv$ and $\CF^kw$. For any $k\geq1$, it is easy to see that
\begin{align*}
&\CF^kv=e^{-t}\frac{t^k}{k!}\CB^ku_0,\\
&\CF^{k}w=\int_0^te^{-(t-\t)}\frac{(t-\t)^{k-1}}{(k-1)!}\CB^{k}\left[Vu^p\right](\t)d\t,
\end{align*}
where the latter is true by Fubini's theorem. From the preceding three estimates, we conclude that 
\begin{align*}
u&\geq\lim_{k\to\I}I_k=\sum_{k=0}^\I \CF^kv+\sum_{k=1}^\I \CF^{k}w,\nonumber\\
&=\CG(t)u_0+\int_0^t\CG(t-\t)\CB\left[Vu^p\right](\t)d\t=\CM u,
\end{align*}
which is the desired result.\smallskip

(2) We perform an iteration similar to (1). Since $u$ is an s-mild sub-solution, it follows that $u$ satisfies
\begin{align*}
u\leq v+\CF u+\CF w=I_0+\CF u,
\end{align*}
where $v,\CF,w,I_0$ are the same as above. By iteration, $u\leq I_0+\CF(I_0+\CF u)=I_1+\CF^2u$, and generally
\begin{align*}
u\leq I_0+\CF(I_{k-1}+\CF^ku)=I_k+\CF^{k+1}u\quad(k\geq0),
\end{align*}
where $I_k$ is the same as in part (1). As above, $I_k\to\CM u$, so it suffices to show that 
\begin{align*}
\CF^ku\to0\quad\mbox{as $k\to\I$}.
\end{align*}

Let $t_0\in(0,T)$. We choose $\th\in(0,1)$ such that 
\begin{align*}
\theta^{-1}(1-e^{-t_0})=:\gamma<1,
\end{align*}
and $R>0$ sufficiently large according to Proposition \ref{Prop:BoundBG} (2). If $0<t\leq t_0$, then
\begin{align*}
\|\CF u(\t)\|_{R,a}&\leq\int_0^te^{-(t-\t)}\theta^{-1}\|u(\t)\|_{R,a}d\t,\nonumber\\
&\leq\gamma\sup_{[0,t_0]}\|u(\t)\|_{R,a}=\gamma\|u\|_{\CZ_{t_0,a}}.
\end{align*}
Thus we have $\|\CF v\|_{\CZ_{t_0,a}}\leq\gamma\|v\|_{\CZ_{t_0,a}}$ which implies
\begin{align*}
\|\CF^ku\|_{\CZ_{t_0,a}}\leq\gamma^k\|u\|_{\CZ_{t_0,a}}\to0\quad\mbox{as $k\to\I$}.
\end{align*}
Now $0\leq\LL \CF^ku\leq\|\CF^ku\|_{\CZ_{t_0,a}}\to0$ on $Q_{t_0}$. So $\CF^ku\to0$ pointwise as $k\to\I$.\QED
\end{proof}

\begin{corollary}\label{Cor:PimM}

If $u$ is an s-solution of (\ref{Eqn:Main}), then it is a solution. 

\end{corollary}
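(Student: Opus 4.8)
The plan is to deduce Corollary \ref{Cor:PimM} directly from Lemma \ref{Lem:TMisM} by noting that an s-solution is simultaneously an s-supersolution and an s-subsolution. Concretely, suppose $u$ is an s-solution of (\ref{Eqn:Main}); by definition $\mu\triangleq e^tu\in\CZ_{T,a}$ for some $0<T\leq\I$ and $a\in\R$, and $\mu=\CM^s\mu$ on $Q_T$. Since $u(t)\in BC_a\subset BC_{a_+}$, we may without loss of generality assume $a\geq0$ (this is exactly the hypothesis under which Lemma \ref{Lem:TMisM} is stated, and the regularization remark already notes the function spaces may be taken with $a\geq0$). The equality $\mu=\CM^s\mu$ trivially yields both $\mu\geq\CM^s\mu$ and $\mu\leq\CM^s\mu$ on $Q_T$, so $u$ is both an s-supersolution and an s-subsolution.

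Next I would apply Lemma \ref{Lem:TMisM} in both directions. Part (1) gives $u\geq\CM u$ on $Q_T$, and part (2) gives $u\leq\CM u$ on $Q_T$. Combining the two inequalities yields $u=\CM u$ on $Q_T$, which is precisely the definition of a (mild) solution. Since $0\leq u\in\CZ_{T,a}$ with $a\geq0$ already holds, all the requirements in the definition of a positive (mild) solution are met, and the corollary follows.

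There is essentially no obstacle here: the work has already been done in Lemma \ref{Lem:TMisM}. The only minor point worth spelling out is the reduction to the case $a\geq0$, which uses the inclusion $BC_b\subsetneq BC_a$ for $b<a$ so that an element of $\CZ_{T,b}$ also lies in $\CZ_{T,a}$; this is needed because Lemma \ref{Lem:TMisM} is stated for $a\geq0$, while the definition of s-solution allows arbitrary $a\in\R$. If one wanted to be fully careful one should also check that the defining integrals in $\CM u$ and $\CM^s u$ converge in the larger space, but this is already implicit in the statements of Lemma \ref{Lem:TMisM} and requires no new argument. Thus the proof is a two-line deduction: $\mu=\CM^s\mu \Rightarrow$ $u$ is both s-super and s-sub $\Rightarrow$ (by Lemma \ref{Lem:TMisM}) $u\geq\CM u$ and $u\leq\CM u$ $\Rightarrow$ $u=\CM u$.
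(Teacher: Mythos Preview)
Your proposal is correct and is exactly the approach intended by the paper: the corollary is stated without proof immediately after Lemma \ref{Lem:TMisM} precisely because an s-solution is simultaneously an s-supersolution and an s-subsolution, so both parts of the lemma give $u\geq\CM u$ and $u\leq\CM u$, hence $u=\CM u$. Your remark on reducing to $a\geq0$ via the inclusion $BC_b\subset BC_a$ is also appropriate and matches the paper's standing convention.
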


Finally, we present some semigroup properties for the problem (\ref{Eqn:Main}). 



\begin{lemma}\label{Lem:supersub}

Let $\t\in(0,T)$ and $0\leq u\in\CZ_{T,a}$.
\begin{enumerate}
\item[(1)] If $u$ is a supersolution (respectively, subsolution) of (\ref{Eqn:Main}) on $(0,T)$, then $u^{\t}$ satisfies
\begin{align}
u^{\t}\geq\CM_{\left(u^\ast(\t),V^{\t}\right)}u^{\t}\quad\mbox{on $Q_{T-\t}$}\quad\left(\mbox{resp.},\,\,u^\t\leq\CM_{\left(u^\ast(\t),V^\t\right)}u^\t\quad\mbox{on $Q_{T-\t}$}\right).
\end{align}
\item[(2)] If $u$ is a solution of (\ref{Eqn:Main}) on $(0,T)$, then $u^\t$ satisfies
\begin{align}
u^\t=\CM_{\left(u(\t),V^\t\right)}u^\t\quad\mbox{on $Q_{T-\t}$}.
\end{align}
\end{enumerate}

\end{lemma}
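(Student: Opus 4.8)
The plan is to reduce everything to a straightforward change of the time variable combined with the defining Duhamel identity, treating part (2) as the equality case and parts (1)(supersolution/subsolution) as the two inequalities obtained by the same computation. First I would recall that, by definition, $u$ being a solution on $(0,T)$ means
\[
u(x,t)=\CG(t)u_0+\int_0^t\CG(t-s)\CB\left[Vu^p\right](x,s)\,ds\qquad\text{on }Q_T .
\]
Now fix $\t\in(0,T)$ and evaluate this identity at time $t+\t$ with $t\in[0,T-\t)$. The free term becomes $\CG(t+\t)u_0=\CG(t)\CG(\t)u_0$, and since $u$ is a solution, $\CG(\t)u_0$ differs from $u(\t)=u(\cdot,\t)$ only by the Duhamel contribution up to time $\t$; the clean way to organize this is to split the integral $\int_0^{t+\t}$ as $\int_0^{\t}+\int_{\t}^{t+\t}$. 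The first piece plus $\CG(t+\t)u_0$ reassembles, using the semigroup property $\CG(t)\CG(\t)=\CG(t+\t)$ of the linear Green operator (which follows from $\CG(t)=e^{-t}e^{t\CB}$ and commutativity of $\CB$ with itself), into $\CG(t)u(\t)$. The second piece, after the substitution $s=\sigma+\t$, becomes $\int_0^{t}\CG(t-\sigma)\CB\left[V(\cdot,\sigma+\t)\,u(\cdot,\sigma+\t)^p\right](x,\sigma)\,d\sigma=\int_0^t\CG(t-\sigma)\CB\left[V^\t (u^\t)^p\right](x,\sigma)\,d\sigma$. Putting the two pieces together gives exactly $u^\t=\CM_{(u(\t),V^\t)}u^\t$ on $Q_{T-\t}$, which is part (2).

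For part (1) the computation is verbatim the same, with the single equality sign replaced throughout by $\geq$ (supersolution case) or $\leq$ (subsolution case); the only point requiring a word of care is monotonicity. When $u$ is merely a supersolution we have $u\geq\CM u$, so evaluating at $t+\t$ and splitting as above yields $u^\t\geq \CG(t)\,[\text{(something)}] + (\text{shifted Duhamel term})$, and to identify the leading term with $\CG(t)u^\ast(\t)$ rather than $\CG(t)u(\t)$ one uses that $u^\ast(\t)=(\CM u)(\cdot,\t)\leq u(\cdot,\t)$ together with positivity of $\CG(t)$ (Lemma \ref{Lem:BasicBG}(1)); this is precisely why the statement is phrased with $u^\ast(\t)$ and not $u(\t)$. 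Symmetrically, in the subsolution case one replaces $u(\t)$ by $u^\ast(\t)\geq u(\t)$ and again invokes positivity of $\CG(t)$, now in the reverse direction.

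I do not expect a genuine obstacle here: the lemma is essentially the statement that the evolution generated by \eqref{Eqn:MildOrig} is a (nonautonomous) semigroup, and the proof is bookkeeping. The one mildly delicate step — the part I would write out most carefully — is justifying the interchange of the shift with the time integral and the reindexing $s\mapsto\sigma+\t$ inside $\int_0^t\CG(t-\sigma)\CB[\cdots]\,d\sigma$, i.e.\ checking that all the integrals converge in $BC_a$ (so that splitting and substituting are legitimate) and that $u^\t\in\CZ_{T-\t,a}$ and $V^\t$ satisfies the same growth hypothesis \eqref{HypVU} with the translated coefficient $\L(\cdot+\t)$. Both are immediate from the already-established mapping properties of $\CB$ and $\CG(t)$ on $BC_a$ in Proposition \ref{Prop:BoundBG} and from $u\in\CZ_{T,a}$, so once these are noted the argument closes.
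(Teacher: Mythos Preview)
Your approach is essentially the same as the paper's: evaluate the Duhamel identity at time $t+\t$, split $\int_0^{t+\t}=\int_0^{\t}+\int_{\t}^{t+\t}$, use the semigroup identity $\CG(t+\t)=\CG(t)\CG(\t)$, and change variables in the second integral. One minor clarification: the positivity argument you invoke in part~(1) is unnecessary. The computation of $\CM u(t+\t)$ is an \emph{exact identity} regardless of whether $u$ is a solution, super-, or subsolution --- the quantity $\CG(\t)u_0+\int_0^{\t}\CG(\t-s)\CB[Vu^p](s)\,ds$ is, by definition, $u^\ast(\t)=(\CM u)(\t)$, so the leading term is $\CG(t)u^\ast(\t)$ on the nose, not merely up to a monotonicity argument. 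The only inequality in part~(1) comes from the single line $u(t+\t)\ge\CM u(t+\t)$ (resp.\ $\le$), after which everything is equality; no appeal to positivity of $\CG(t)$ or to the relation $u^\ast(\t)\lessgtr u(\t)$ is needed.
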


\begin{proof}
It suffices to prove (1). Let $u$ be a supersolution of (\ref{Eqn:Main}). So it satisfies $u(t+\t)\geq\CM u(t+\t)$ on $Q_{T-\t}$. Since
\begin{align*}
\CM u(t+\t)&=\CG(t+\t)u_0+\int_0^{t+\t}\CG(t+\t-s)\CB[Vu^p](s)ds,\nonumber\\
&=\CG(t)\left(\CG(\t)u_0+\int_0^\t\CG(\t-s)\CB[Vu^p](s)ds\right)+\int_\t^{t+\t}\CG(t+\t-s)\CB[Vu^p](s)ds,\nonumber\\
&=\CG(t)u^\ast(\t)+\int_0^{t}\CG(t-s)\CB[Vu^p](s+\t)ds\quad(\mbox{see (\ref{Not})}),\nonumber\\
&=\CM_{\left(u^\ast(\t),V^\t\right)}u^\t(t),
\end{align*}
it follows that $u^\t\geq\CM_{\left(u^\ast(\t),V^\t\right)}u^\t$ as claim. The second assertion is also true.\QED
\end{proof}

\begin{remark}\label{Rem:MsSem}
The semigroup property of s-sub (and s-super) solutions is a bit different. One can show that
\begin{align*}
\CM^s\mu(t+\t)=\CM^s_{\left(\mu^\dag(\t),e^{(1-p)\t}V^\t\right)}\mu^\t(t).
\end{align*}
If $u$ is an s-supersolution and $\T u\triangleq u^\t$, for a fixed $\t>0$, then $\T\mu\triangleq e^t\T u=e^{-\t}\mu^\t$ satisfies
\begin{align}
\T\mu\geq\CM^s_{\left(e^{-\t}\mu^\dag(\t),V^\t\right)}\T\mu\quad(\mbox{on $Q_{T-\t}$}),
\end{align}
where $\mu^\dag=\CM^s\mu$. Similar statements hold for s-solutions and s-subsolutions.
\end{remark}

\section{Global existence}\label{Sec:Glob}

We establish the global existence of solutions for the Cauchy problem (\ref{Eqn:Main}) under the assumption (\ref{HypVU}). Fix $\th\in(0,1)$ and $R\geq1$ such that Proposition \ref{Prop:BoundBG} is true and let $\t_0>0$. 
The assumption (\ref{HypVU}) implies that $V$ satisfies
\begin{align}\label{HypVUp}
0\leq V(x,t)\leq\Vt\left(\max_{[0,\t_0]}\L\right)\left(R^2+|x|^2\right)^{\frac{\S_+}{2}}\quad(\mbox{on $Q_{\t_0}$}),
\end{align}
for some constant $\Vt>0$ independent of $\t_0$. Let $\LL=\left(R^2+|x|^2\right)^{-a/2}$ where $a$ satisfies (\ref{HypAL}).


\begin{lemma}\label{Lem:LamV}

Assume (\ref{HypVU}) (see (\ref{HypVUp})) and (\ref{HypAL}). 
Then
\begin{align}
\LL^{-p}V\leq\Vt\left(\max_{[0,\t_0]}\L\right)\LL^{-1}\quad(\mbox{on $Q_{\t_0}$}).
\end{align}
Thus, for all $\vp\in BC_a$ and $t\leq\t_0$, we have
\begin{align}
\left\|\CG(t)\CB\left(V\vp^p\right)\right\|_{R,a}\leq\vU\B(t)\|\vp\|_{R,a}^p,\quad\vU=\vU(\t_0)\triangleq\theta^{-1}\Vt\left(\max_{[0,\t_0]}\L\right).
\end{align}

\end{lemma}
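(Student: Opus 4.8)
The plan is to prove the two assertions in sequence, both being essentially pointwise estimates combined with the operator bounds from Proposition \ref{Prop:BoundBG} (2). First I would establish the pointwise inequality $\LL^{-p}V\leq\Vt\left(\max_{[0,\t_0]}\L\right)\LL^{-1}$ on $Q_{\t_0}$. Recalling $\LL=\left(R^2+|x|^2\right)^{-a/2}$, we have $\LL^{-p}=\left(R^2+|x|^2\right)^{ap/2}$ and $\LL^{-1}=\left(R^2+|x|^2\right)^{a/2}$, so after dividing by the common (positive) factor the claim reduces to checking that the exponent in $\LL^{-p}\cdot\left(R^2+|x|^2\right)^{\S_+/2}$, namely $\tfrac{ap}{2}+\tfrac{\S_+}{2}$, does not exceed $\tfrac{a}{2}$. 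This is exactly the condition $ap+\S_+\leq a$, i.e.\ $a(1-p)\geq\S_+$, which is precisely (\ref{HypAL}). Combining this with the bound (\ref{HypVUp}) on $V$ gives the first displayed inequality.

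Next I would prove the operator estimate. Starting from $\left|\vp(x)\right|\leq\|\vp\|_{R,a}\LL^{-1}$ (by definition of the norm), and using $0<p<1$, we get the pointwise bound $\left|\vp(x)\right|^p\leq\|\vp\|_{R,a}^p\,\LL^{-p}$. Multiplying by $V$ and applying the first part of the lemma yields $V\left|\vp\right|^p\leq\Vt\left(\max_{[0,\t_0]}\L\right)\|\vp\|_{R,a}^p\,\LL^{-1}$ on $Q_{\t_0}$. Now I would apply $\CB$: since $\CB$ is a positive operator (Lemma \ref{Lem:BasicBG} (1)) and $\LL^{-1}=(R^2+|\cdot|^2)^{a/2}$ with $a\geq0$, Proposition \ref{Prop:BoundBG} (2) (in the form $\CB(\LL^{-1})\leq\theta^{-1}\LL^{-1}$, valid since $R$ is chosen large) gives
\[
\CB\left(V\vp^p\right)\leq\Vt\left(\max_{[0,\t_0]}\L\right)\|\vp\|_{R,a}^p\,\theta^{-1}\LL^{-1}.
\]
Then I apply $\CG(t)$, again a positive operator, and use the semigroup/Green-operator bound $\|\CG(t)\psi\|_{R,a}\leq\B(t)\|\psi\|_{R,a}$ from Proposition \ref{Prop:BoundBG} (2). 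Since $\LL\cdot\LL^{-1}=1$, the quantity $\theta^{-1}\Vt\left(\max_{[0,\t_0]}\L\right)\|\vp\|_{R,a}^p\,\LL^{-1}$ has $\|\cdot\|_{R,a}$-norm equal to $\theta^{-1}\Vt\left(\max_{[0,\t_0]}\L\right)\|\vp\|_{R,a}^p$, so
\[
\left\|\CG(t)\CB\left(V\vp^p\right)\right\|_{R,a}\leq\B(t)\,\theta^{-1}\Vt\left(\max_{[0,\t_0]}\L\right)\|\vp\|_{R,a}^p=\vU\B(t)\|\vp\|_{R,a}^p,
\]
which is the claimed estimate with $\vU=\theta^{-1}\Vt\left(\max_{[0,\t_0]}\L\right)$.

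I do not anticipate a serious obstacle here; this lemma is a bookkeeping step that packages (\ref{HypVUp}), (\ref{HypAL}), and Proposition \ref{Prop:BoundBG} into a single estimate suitable for the contraction-mapping argument in the existence proof. The one point requiring mild care is the order of operations: the monotonicity of $\CB$ and $\CG(t)$ (positivity) is what lets us pass from the pointwise bound $V\vp^p\leq C\LL^{-1}$ to $\CB(V\vp^p)\leq C\,\CB(\LL^{-1})\leq C\theta^{-1}\LL^{-1}$ and then to the norm bound after applying $\CG(t)$, rather than trying to estimate $\CB$ and $\CG(t)$ directly in operator norm on a function that is merely controlled pointwise. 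One should also note $\vU$ is independent of $\t_0$ only through $\Vt$; the dependence on $\t_0$ is entirely via $\max_{[0,\t_0]}\L$, which is exactly what the notation $\vU=\vU(\t_0)$ records.
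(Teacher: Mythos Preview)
Your proposal is correct and follows essentially the same argument as the paper's proof: first reduce the pointwise inequality to the exponent comparison $\S_++ap\leq a$ (which is (\ref{HypAL})), then combine with the operator bounds from Proposition~\ref{Prop:BoundBG}~(2). The only detail you leave implicit is that the step $(R^2+|x|^2)^{(\S_++ap)/2}\leq(R^2+|x|^2)^{a/2}$ requires the base to be $\geq1$, i.e.\ $R\geq1$, which is fixed at the start of Section~\ref{Sec:Glob}.
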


\begin{proof}
By (\ref{HypAL}), we have $\S_++ap\leq a$. Hence
\begin{align*}
\LL^{-p}V\leq \Vt\left(\max_{[0,\t_0]}\L\right)\left(R^2+|x|^2\right)^{\frac{\S_++ap}{2}}\leq\Vt\left(\max_{[0,\t_0]}\L\right)\LL^{-1},
\end{align*}
where we have used (\ref{HypVUp}) and that $R\geq1$. By Proposition (\ref{Prop:BoundBG}) (2) and the preceding estimate, we have
\begin{align*}
\left\|\CG(t)\CB\left(V\vp^p\right)\right\|_{R,a}&\leq\theta^{-1}\B(t)\left\|\LL V\vp^p\right\|_{L^\I},\\
&\leq\theta^{-1}\Vt\left(\max_{[0,\t_0]}\L\right)\B(t)\|(\LL\vp)^p\|_{L^\I}\leq\vU\B(t)\|\vp\|_{R,a},
\end{align*}
which is the desired estimate. \QED
 
\end{proof}

Now we prepare to solve the problem. Observe that $0\leq u\in\CZ_{\t_0,a}$ is a solution of (\ref{Eqn:Main}) if and only if the function
\begin{align}
w=\LL u\in\CZ_{\t_0,0}
\end{align}
satisfies the integral equation
\begin{align}\label{Eqn:wp}
w(t)=\LL\CG(t)u_0+\LL\int_0^t\CG(t-\t)\CB\left(\LL^{-p}Vw^p\right)(\t)\,d\t\quad(\mbox{on $Q_{\t_0}$}).
\end{align}
The function $w\mapsto w^p$ is not Lipschitz continuous near $w=0$ when $0<p<1$, so we cannot employ the standard contraction mapping technique directly to solve (\ref{Eqn:wp}). An approximation argument is needed (see for instance \cite{AguirEscobedo86},\cite{WuZhaoYinLi01}).\smallskip

We consider $F:\R\to\R$ such that
\begin{align}\label{Assump:Phi}
F\in\mbox{Lip}(\R),\qquad F'\geq0,\qquad F(0)=0,
\end{align}
and then solve the following problem: Find $0\leq w\in\CZ_{\t_0,0}$ satisfying
\begin{align}\label{Eqn:Komg}
\begin{cases}
\displaystyle w=\CK w\quad(\mbox{on $Q_{\t_0})$, where}\\
\vspace{-10pt}\\
\displaystyle\CK w(t)\triangleq\LL\CG(t)u_0+\LL\int_0^t\CG(t-\t)\CB\left(\LL^{-p}VF(w)\right)(\t)\,d\t.
\end{cases}
\end{align}
Whenever the dependence on $u_0,F$ is important, $\CK$ will be denoted by $\CK_{(u_0,F)}$.

\begin{proposition}\label{Prop:wcomp}

Assume (\ref{HypVU}) (see (\ref{HypVUp})), (\ref{HypAL}), and (\ref{Assump:Phi}). If $u_0\in BC_a$ then Eq.\ (\ref{Eqn:Komg}) admits a unique solution $w\in\CZ_{\t_0,0}$, and if $u_0\geq0$ then $w\geq0$. In particular,  the solution can be extended globally in time to $w\in\CZ_{\I,a}$.

\end{proposition}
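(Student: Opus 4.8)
The plan is to solve \eqref{Eqn:Komg} by a contraction mapping argument on a short time interval, exploiting the almost-contraction property of $\CG(t)$ and $\CB$ from Proposition~\ref{Prop:BoundBG}~(2) together with the Lipschitz continuity of $F$, and then to patch local solutions together to obtain a global one via the semigroup structure. First I would fix $\th\in(0,1)$ and $R\geq1$ so that Proposition~\ref{Prop:BoundBG} and Lemma~\ref{Lem:LamV} hold, let $L=\Lip(F)$, and work in the Banach space $\CZ_{\t_1,0}=C([0,\t_1);BC_0)$ with the norm $\|\cdot\|_{\CZ_{\t_1,0}}$ for a small $\t_1\in(0,\t_0]$ to be chosen. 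For $w\in\CZ_{\t_1,0}$ one estimates, using $F(0)=0$ so that $|F(w)|=|F(w)-F(0)|\leq L|w|$, Lemma~\ref{Lem:LamV} (with $\vp$ replaced by the relevant function and $p$-power replaced by the linear bound on $F$), and $\B(t)=e^{-(1-\theta^{-1})t}$ which is bounded on $[0,\t_1]$, that
\begin{align*}
\|\CK w(t)-\CK \tilde w(t)\|_{R,a}
\leq \int_0^t \theta^{-1}\B(t-\t)\,\Vt\Big(\max_{[0,\t_0]}\L\Big)\,L\,\|w(\t)-\tilde w(\t)\|_{R,a}\,d\t
\leq C\,\t_1\,\|w-\tilde w\|_{\CZ_{\t_1,0}},
\end{align*}
and similarly $\|\CK w\|_{\CZ_{\t_1,0}}\leq \|\LL\CG(t)u_0\|_{\CZ_{\t_1,0}}+C\t_1\|w\|_{\CZ_{\t_1,0}}\leq C'\|u_0\|_{R,a}+C\t_1\|w\|_{\CZ_{\t_1,0}}$, so that $\CK$ maps a suitable closed ball of $\CZ_{\t_1,0}$ into itself and is a contraction there once $\t_1$ is small enough (depending only on $\th,\Vt,\max_{[0,\t_0]}\L,L$, but \emph{not} on $u_0$). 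Banach's fixed point theorem then yields a unique local solution $w\in\CZ_{\t_1,0}$.

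Next I would promote uniqueness and positivity. Uniqueness on all of $[0,\t_0)$ follows by a standard continuation/Gronwall argument: if $w,\tilde w$ are two solutions, the same estimate gives $\|w-\tilde w\|_{\CZ_{s,0}}\leq C s\|w-\tilde w\|_{\CZ_{s,0}}$ on every subinterval of length $<1/C$, forcing coincidence, and one slides the interval. For positivity, note that if $u_0\geq0$ then $\LL\CG(t)u_0\geq0$ since $\CG(t)$ is a positive operator (Lemma~\ref{Lem:BasicBG}~(1)); replacing $F$ by $F_+(w)=F(w\vee 0)$ (which still satisfies \eqref{Assump:Phi} and agrees with $F$ on $[0,\I)$) one sees that $\CK$ maps the positive cone $\{w\geq0\}$ into itself because $\CB$ and $\CG(t)$ preserve positivity and $\LL^{-p}V\geq0$; since the positive cone is closed and invariant, the fixed point obtained above lies in it, and by the uniqueness just proved this fixed point is \emph{the} solution, so $w\geq0$, and then $F_+(w)=F(w)$ so $w$ solves the original \eqref{Eqn:Komg}.

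Finally I would globalize. The key point is that the length $\t_1$ of the existence interval depends only on the structural constants and not on the size of the initial datum, because $F$ being globally Lipschitz makes the nonlinearity grow at most linearly — there is no superlinear feedback that could blow up in finite time. Concretely, using the semigroup identity of Lemma~\ref{Lem:supersub}~(2) (applied to the $\CK$-equation, i.e.\ that $w^{\t_1}$ solves \eqref{Eqn:Komg} with initial datum $w^\ast(\t_1)=\CG(\t_1)u_0+\cdots$ and potential $V^{\t_1}$, all of which still satisfy the standing hypotheses on $[0,\t_0-\t_1)$ with the \emph{same} constants), one restarts the fixed point construction at $t=\t_1$ with a step of the same size $\t_1$, then at $2\t_1$, and so on, reaching any $T<\t_0$ in finitely many steps; the pieces agree on overlaps by uniqueness, giving $w\in\CZ_{\t_0,0}$. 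Since $\t_0>0$ was arbitrary and the construction at each stage uses $\max_{[0,\t_0]}\L<\I$, iterating over $\t_0=1,2,3,\dots$ extends $w$ to $\CZ_{\I,0}$, hence $u=\LL^{-1}w\in\CZ_{\I,a}$. The main obstacle is bookkeeping the uniformity of $\t_1$ across restarts — one must check that the restarted data $w^\ast(m\t_1)$ stay in $BC_a$ and that the relevant constant $\vU(\t_0)$ in Lemma~\ref{Lem:LamV} controls every stage — but this is exactly where the global Lipschitz assumption \eqref{Assump:Phi} and the $t\to0^+$ almost-contraction of $\CB,\CG(t)$ do the work, so no genuine difficulty arises beyond careful constant-tracking.
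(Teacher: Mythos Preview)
Your proposal is correct and follows essentially the same route as the paper: fix $\th,R$ so that Proposition~\ref{Prop:BoundBG} applies, use the global Lipschitz bound $|F(w)|\leq L|w|$ together with $\LL^{-p}V\lesssim\LL^{-1}$ to get a contraction on a ball in $\CZ_{\t_1,0}$ with $\t_1$ depending only on structural constants (not on $u_0$), then iterate via the semigroup property to reach any finite time. The only cosmetic differences are that the paper obtains positivity directly from Picard iteration (since $F'\geq0$, $F(0)=0$ already give $F(w)\geq0$ for $w\geq0$, so your $F_+$ detour is unnecessary though harmless), and the semigroup identity you invoke is not literally Lemma~\ref{Lem:supersub}~(2) but its analogue for $\CK$, which the paper writes out explicitly.
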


\begin{proof}

Fix $\theta\in(0,1)$ and $R\geq1$ satisfying Proposition (\ref{Prop:BoundBG}). 
For convenience, we use $\|\cdot\|=\|\cdot\|_{R,a}=\|\LL\cdot\|_{L^\I}$. 
By (\ref{Assump:Phi}), there is a constant $l>0$ such that 
\begin{align*}
|F(s)|\leq l|s|\quad\mbox{for all $s\in\R$}.
\end{align*}
First we solve the local problem. Let $T>0$. The Banach space $\CX\triangleq\CZ_{T,0}$ is equipped with the norm
\begin{align*}
\|w\|_{\CX}=\sup_{[0,T]}\|w(t)\|_{L^\I}.
\end{align*}
$w$ is a solution of (\ref{Eqn:Komg}) on $[0,T]$ if and only it is a fixed point of $\CK$ on $\CX$.\smallskip

Let us show that $\CK$ is a self-map on $\CX$. For $w\in\CX$, we have by Proposition \ref{Prop:BoundBG} (2) and Lemma \ref{Lem:LamV} that
\begin{align*}
\|\CK w(t)\|_{L^\I}&\leq\|\CG(t)u_0\|+\int_0^t\left\|\CG(t-\t)\CB\left(\LL^{-p}VF(w)\right)(\t)\right\|d\t,\nonumber\\
&\leq\B(t)\|u_0\|+\vU\int_0^t\B(t-\t)\|F(w)(\t)\|_{L^\I}d\t,
\nonumber\\
&\leq\B(t)\|u_0\|+\vU l\int_0^t\B(t-\t)\|w(\t)\|_{L^\I}d\t,\nonumber\\
&\leq\B(T)\left(\|u_0\|+\vU lT\|w\|_\CX\right).
\end{align*}
Thus $\CK w(t)\in L^\I$ for all $t\in[0,T]$. It is also clear that $\CK w(t)\in C(\R^n)$ so $\CK w(t)\in BC_0$.
For $s,t\in(0,T]$, one can argue as above to get that
\begin{align*}
\|\CK w(s)-\CK w(t)\|_{L^\I}&\leq\|(\CG(t)-\CG(s))u_0\|+\vU l\B(T)\|w\|_\CX|s-t|\to0
\end{align*}
as $|s-t|\to0$. So $t\mapsto\CK w(t)$ is continuous, hence $\CK w\in\CX$. Also, note that $\CK$ satisfies the estimate
\begin{align}\label{Eqn:QWX}
\|\CK w\|_\CX\leq\B(T)\left(\|u_0\|+\vU lT\|w\|_\CX\right).
\end{align}

Let $D>0$ be a number to be specified and define 
\begin{align}
\CX_D=\{w\in\CX:\|w\|_\CX\leq D\}.
\end{align}
We show that $\CK$ is a self-map on $\CX_D$ provided $D$ is sufficiently large and $T$ is sufficiently small. Let $w\in\CX_D$. By (\ref{Eqn:QWX}), we have $\|\CK w\|_\CX\leq D$, provided $T,D$ satisfy
\begin{align*}
\B(T)\|u_0\|\leq\frac{D}{2}\quad\mbox{and}\quad\vU l\B(T)T\leq\frac{1}{2}.
\end{align*}
The second condition is true if $T>0$ is chosen sufficiently small and it can be achieved depending only upon $\vU,\theta,l$ and is independent of $u_0$. Next we choose $D>0$ large, depending upon $T$ and $u_0$, to fulfil the first condition. Fix such a choice of $D,T$ then we obtain $\CK:\CX_D\to\CX_D$.\smallskip

We show that $\CK$ is a contraction. Let $w,\omg\in\CX_D$. By the property of $F$ and Lemma \ref{Lem:LamV}, then
\begin{align*}
\|\CK w-\CK\omg\|_\CX&\leq\sup_{t\in[0,T]}\int_0^t\left\|\CG(t-\t)\CB\left(\LL^{-p}V|F( w)-F(\omg)|\right)(\t)\right\|d\t,\nonumber\\
&\leq\sup_{t\in[0,T]}\vU \B(t)\int_0^t\left\|\left(F( w)-F(\omg)\right)(\t)\right\|_{L^\I}d\t,\nonumber\\
&\leq \vU l\B(T)T\| w-\omg\|_\CX\leq\frac{1}{2}\| w-\omg\|_\CX.
\end{align*}
Hence, $\CK:\CX_D\to\CX_D$ is a contraction as desired.\smallskip

By the Banach fixed point theorem, there is a unique mild solution $ w$ to the Cauchy problem (\ref{Eqn:Komg}) which is defined on the time interval $[0,T]$. Note that $T$ depends only on $\Vt,\theta,l$ and is independent of $u_0$.\smallskip

The semigroup property of the integral equation $w=\CK w$ is
\begin{align*}
w^T(t)=\LL\CG(t)\left(\LL^{-1}w(T)\right)+\LL\int_0^t\CG(t-\t)\CB\left(\LL^{-p}V^TF(w^T)\right)(\t)\,d\t\quad(\mbox{on $Q_{\t_0-T}$}),
\end{align*}
which is almost the same as the equation $w=\CK w$ except for the replacement $u_0\to\LL^{-1}w(T)$ and $V\to V^T$, where $V^T$, $w^T$ are the translations by time $T$. Since the local existence time $T$ for the equation $w=\CK w$ is independent of $u_0$ and the potential $V^T$ satisfies (\ref{HypVUp}) on $Q_{\t_0-T}$, we can apply the preceding result to obtain a unique solution $w$ on
\[
[T,2T],\,\,[2T,3T],\,\,\ldots\quad\mbox{and so forth}.
\]
Hence the mild solution $w$ can be extended to $[0,\t_0]$.\smallskip

The assertion that $w\geq0$ provided $u_0\geq0$ follows from Picard iteration and (\ref{Assump:Phi}). See also the following proposition. Applying the above semigroup property and the existence result on $[0,\t_0]$ for any $\t_0>0$, the solution $w$ can be extended to $[\t_0,2\t_0],[2\t_0,3\t_0]$, and so on. Therefore $w$ exists globally in time.\QED

\end{proof}

Next we prove a comparison result for Eq.\ (\ref{Eqn:wp}).

\begin{proposition}\label{Prop:CompW}

Assume (\ref{HypVU}) (see (\ref{HypVUp}), (\ref{HypAL}), and (\ref{Assump:Phi}). If $w\geq0$ and $\omg\geq0$ satisfy
\begin{align}
w&\geq\CK_{(u_0,V)}w\quad\mbox{on $Q_{\t_0}$},\quad\omg\leq\CK_{(v_0,V)}\omg\quad\mbox{on $Q_{\t_0}$},
\end{align}
and $u_0\geq v_0$ on $\R^n$, then $w\geq\omg$ on $Q_{\t_0}$. In particular, if $u_0\geq0$ then $w\geq0$.

\end{proposition}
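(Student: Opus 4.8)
The plan is to run an iteration/monotone-limit argument analogous to the one used in Proposition~\ref{Prop:wcomp}, but now comparing the supersolution $w$ against the subsolution $\omg$ termwise. First I would fix $\th\in(0,1)$ and $R\geq1$ as in Proposition~\ref{Prop:BoundBG}, and pick a time step $T>0$, depending only on $\vU,\theta,l$ (where $l$ is the Lipschitz-type bound $|F(s)|\leq l|s|$ coming from (\ref{Assump:Phi})), small enough that $\vU l\B(T)T\leq 1/2$. Since $\B(t)\leq 1$ on $[0,\t_0]$, the contraction constant obtained for $\CK$ on $\CX=\CZ_{T,0}$ is at most $1/2$ on every subinterval of length $T$, which is exactly what makes the iteration converge. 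It suffices to prove $w\geq\omg$ on $[0,T]$; the semigroup property of the equation $w=\CK w$ noted in the proof of Proposition~\ref{Prop:wcomp} (replacing $u_0\mapsto\LL^{-1}w(T)$, $v_0\mapsto\LL^{-1}\omg(T)$, $V\mapsto V^T$, and using $w(T)\geq\omg(T)$) then lets one bootstrap to $[T,2T],[2T,3T],\dots$ and cover all of $[0,\t_0]$.

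On $[0,T]$, from $w\geq\CK_{(u_0,V)}w$ and $\omg\leq\CK_{(v_0,V)}\omg$ and $u_0\geq v_0$ together with positivity of $\CG(t)$, $\CB$, and monotonicity $F'\geq0$, I would subtract and estimate
\begin{align*}
w-\omg\ &\geq\ \CG(t)(u_0-v_0)+\LL\int_0^t\CG(t-\t)\CB\!\left(\LL^{-p}V\big(F(w)-F(\omg)\big)\right)(\t)\,d\t\\
&\geq\ -\,\LL\int_0^t\CG(t-\t)\CB\!\left(\LL^{-p}V\big(F(\omg)-F(w)\big)^+\right)(\t)\,d\t .
\end{align*}
Writing $z=(\omg-w)^+\geq0$, the monotonicity of $F$ gives $(F(\omg)-F(w))^+\leq F'$-Lipschitz bound $\leq l\,(\omg-w)^+=l z$ pointwise (using $|F(a)-F(b)|\leq l|a-b|$, which follows from (\ref{Assump:Phi})). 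Hence, applying the weighted norm $\|\cdot\|_{R,a}=\|\LL\cdot\|_{L^\I}$ and Lemma~\ref{Lem:LamV} exactly as in Proposition~\ref{Prop:wcomp}, one gets, for $s\in[0,T]$,
\begin{align*}
\|z(s)\|_{L^\I}\ \leq\ \vU l\int_0^s\B(s-\t)\,\|z(\t)\|_{L^\I}\,d\t\ \leq\ \vU l\,T\,\B(T)\,\sup_{[0,T]}\|z(\t)\|_{L^\I}\ \leq\ \tfrac12\,\|z\|_{\CZ_{T,0}} .
\end{align*}
Taking the supremum over $s\in[0,T]$ yields $\|z\|_{\CZ_{T,0}}\leq\frac12\|z\|_{\CZ_{T,0}}$, whence $z\equiv0$, i.e.\ $\omg\leq w$ on $Q_T$. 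The final sentence, that $u_0\geq0$ forces $w\geq0$, is the special case $\omg\equiv0$, $v_0\equiv0$: the zero function satisfies $0\leq\CK_{(0,V)}0$ trivially, so the comparison just proved gives $w\geq0$.

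The only genuine subtlety — rather than real obstacle — is bookkeeping with the weighted space: one must pass from the pointwise inequality for $w-\omg$ to an $L^\I$ inequality for the \emph{positive part} $z=\LL(\omg-w)^+$, which is legitimate because $\CG(t)$ and $\CB$ are positive operators, so dropping the nonnegative term $\CG(t)(u_0-v_0)$ and replacing $F(w)-F(\omg)$ by $-(F(\omg)-F(w))^+$ only weakens a lower bound, and because $\|(\cdot)^+\|_{L^\I}\leq\|\cdot\|_{L^\I}$. One should also note the Gronwall-type closing step can alternatively be done without the factor-$\tfrac12$ trick, by iterating $\|z(s)\|_{L^\I}\leq \vU l\int_0^s\|z(\t)\|_{L^\I}d\t$ to get $\|z(s)\|_{L^\I}\leq \|z\|_{\CZ_{T,0}}(\vU l s)^k/k!\to0$; either route works, and the contraction version is the more economical since $T$ was already chosen with $\vU l\B(T)T\leq1/2$ in Proposition~\ref{Prop:wcomp}.
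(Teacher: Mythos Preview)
Your argument is correct and matches the paper's: subtract, drop the nonnegative initial term $\LL\CG(t)(u_0-v_0)$, bound $(F(\omg)-F(w))_+\leq l(\omg-w)_+$ via the Lipschitz condition, and close with a Gronwall-type inequality for $\zeta(t)=\|(\omg-w)_+(t)\|_{L^\I}$. The paper takes your ``alternative'' route---a direct Gronwall on all of $[0,\t_0]$---rather than the small-$T$ contraction plus semigroup bootstrap; incidentally $\B(t)=e^{(\th^{-1}-1)t}\geq1$, not $\leq1$, but this is harmless since only $\B(T)$ enters the contraction constant on each length-$T$ subinterval.
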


\begin{proof}
Since $v_0\leq u_0$, we have $\LL\CG(t)(v_0-u_0)\leq0$. Since $(F(\omg)-F(w))_+\leq l(\omg-w)_+$, we have
\begin{align*}
(\omg-w)(t)&\leq\LL\int_0^t\CG(t-\t)\CB\left(\LL^{-p}V\left(F(\omg)-F(w)\right)_+\right)(\t)d\t,\nonumber\\
&\leq l\int_0^t\left\|\CG(t-\t)\CB\left(\LL^{-p}V(\psi-w)_+\right)(\t)\right\|_{R,a}d\t,\nonumber\\
&\leq \vU l\int_0^t\B(t-\t)\|(\omg-w)_+(\t)\|_{L^\I}d\t\quad(\mbox{by Lemma \ref{Lem:LamV}}),\nonumber\\
&\leq \vU l\B(t)\int_0^t\|(\omg-w)_+(\t)\|_{L^\I}d\t.
\end{align*}
Let $\zeta(t)=\|(\omg-w)_+(t)\|_{L^\I}$. Then the continuous function $\zeta$ satisfies the differential inequality 
\begin{align*}
\zeta(t)\leq\vU l\B(t)\int_0^t\zeta(s)\,ds.
\end{align*}
By Gronwall's inequality, we conclude that $\zeta\equiv0$. Therefore $w\geq\omg$.\QED
\end{proof}

The following elementary result will also be useful.

\begin{lemma}\label{Lem:Gt1}

Let $a\in\R$. If $\vp\in BC_a$ and $\vp\geq0$, then
\begin{align*}
\CG(t)\vp\geq e^{-t}\vp\quad\mbox{on $Q_\I$}.
\end{align*}

\end{lemma}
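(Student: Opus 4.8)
The plan is to prove the pointwise inequality $\CG(t)\vp\geq e^{-t}\vp$ directly from the series representation of the Green operator given in (\ref{Eqn:Green}). Recall that $\CG(t)=e^{-t}e^{t\CB}=e^{-t}\sum_{k=0}^\I\frac{t^k}{k!}\CB^k$, where $\CB^0=I$. Since $\vp\geq0$ and $\CB$ is a positive operator on $BC_a$ (Lemma \ref{Lem:BasicBG} (1)), every iterate $\CB^k\vp$ is nonnegative for each $k\geq0$. Hence each term $\frac{t^k}{k!}\CB^k\vp$ in the series is nonnegative, and for $t>0$ the partial sums dominate the $k=0$ term:
\begin{align*}
e^{t\CB}\vp=\sum_{k=0}^\I\frac{t^k}{k!}\CB^k\vp\geq\CB^0\vp=\vp\quad\mbox{on $\R^n$}.
\end{align*}

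Multiplying through by the scalar $e^{-t}>0$ yields $\CG(t)\vp=e^{-t}e^{t\CB}\vp\geq e^{-t}\vp$ on $Q_\I$, which is exactly the claim. The case $t=0$ is the trivial identity $\CG(0)\vp=\vp$, so the inequality holds on all of $Q_\I$.

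There is essentially no obstacle here; the only point requiring a word of care is the convergence and term-by-term nonnegativity of the series $\sum_k\frac{t^k}{k!}\CB^k\vp$ in $BC_a$, but this is already implicit in the definition of $\CG(t)$ in (\ref{Eqn:Green}) and in the boundedness estimate of Proposition \ref{Prop:BoundBG} (2) (for $a\geq0$; for $a<0$ one has $BC_a\subset BC_0$, so it suffices to work in $BC_0$, or one can simply note that $\CB^k\vp\geq0$ pointwise regardless of the norm used). An alternative, even more elementary, route avoiding the series is to invoke Lemma \ref{Lem:BasicBG} (2): writing $\CG(t)\vp\geq\CG(t)(0)=0$ is not enough, but one can instead observe that $e^{-t}\vp$ is an $s$-subsolution-type lower barrier; however, the series argument above is the cleanest and is self-contained, so that is the one I would present.
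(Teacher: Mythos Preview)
Your proof is correct. It differs in presentation from the paper's argument: the paper sets $\mu=e^{t}\CG(t)\vp$ and uses the s-mild integral equation $\mu(t)=\vp+\int_0^t\CB\mu(\t)\,d\t$, from which $\mu\geq\vp$ follows immediately since $\mu\geq0$ and $\CB$ is positive. Your argument bypasses the integral formulation and works directly with the series $\CG(t)=e^{-t}\sum_{k\geq0}\frac{t^k}{k!}\CB^k$, dropping all terms with $k\geq1$. The two arguments are really the same observation viewed from two sides --- the series is the Picard expansion of the integral equation --- so neither buys substantially more than the other. Your version is marginally more self-contained since it appeals only to the definition (\ref{Eqn:Green}) and positivity of $\CB$, while the paper's version ties the lemma into the s-mild framework used throughout Section~\ref{Sec:Grow}. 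Either would be perfectly acceptable here.
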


\begin{proof}
Since $u=\CG(t)\vp\geq0$ solves the linear problem $\P_tu-\Lap\P_tu=\Lap u$, $u(\cdot,0)=\varphi$, in the sense of mild solutions. So the function $\mu=e^tu$ satisfies
\begin{align*}
\mu(t)=\vp+\int_0^t\CB\mu(\t)\,d\t.
\end{align*}
Since $\mu\geq0$, we have $\mu\geq\vp$. Hence $\CG(t)\vp=e^{-t}\mu\geq e^{-t}\vp$ as needed.\QED
\end{proof}

We can now state and prove our main global existence result.

\begin{theorem}[Global existence]\label{Thm:Existence}

Assume (\ref{HypVU}) (see (\ref{HypVUp})) and (\ref{HypAL}). If $u_0\in BC_a$ with $u_0\geq0$, then (\ref{Eqn:Main}) has a global solution 
\[
0\leq u\in\CZ_{\I,a}.
\]

\end{theorem}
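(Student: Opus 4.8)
The plan is to obtain the solution $u$ as the monotone limit of solutions $w_\varepsilon$ to the approximate problems (\ref{Eqn:Komg}) with a carefully chosen family of Lipschitz approximants $F_\varepsilon$ of the map $s\mapsto s_+^p$. First I would fix $\theta\in(0,1)$ and $R\ge 1$ so that Proposition \ref{Prop:BoundBG} applies, and set $\LL=(R^2+|\cdot|^2)^{-a/2}$ with $a$ satisfying (\ref{HypAL}). For each $\varepsilon>0$ define $F_\varepsilon\in\Lip(\R)$ with $F_\varepsilon'\ge 0$, $F_\varepsilon(0)=0$, $F_\varepsilon(s)=s^p$ for $s\ge\varepsilon$, $F_\varepsilon$ affine (with slope $p\varepsilon^{p-1}$) on $[0,\varepsilon]$, and $F_\varepsilon(s)=0$ for $s\le 0$; these satisfy (\ref{Assump:Phi}) and, crucially, $F_\varepsilon\ge F_{\varepsilon'}\ge (\cdot)_+^p$ pointwise whenever $\varepsilon\le\varepsilon'$. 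Proposition \ref{Prop:wcomp} then gives a unique global $0\le w_\varepsilon\in\CZ_{\I,0}$ solving $w_\varepsilon=\CK_{(\LL u_0,F_\varepsilon)}w_\varepsilon$, and Proposition \ref{Prop:CompW} (applied on each $Q_{\t_0}$, using that $\CK$ is monotone in $F$ as well as in the data, since $F_\varepsilon\ge F_{\varepsilon'}$) shows $w_\varepsilon$ is nonincreasing in $\varepsilon$.

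Next I would establish a uniform-in-$\varepsilon$ upper bound so that the monotone limit $w\triangleq\lim_{\varepsilon\downarrow 0}w_\varepsilon$ is finite. On a fixed slab $[0,\t_0]$, pick $\th$ with $\gamma\triangleq\theta^{-1}(1-e^{-\t_0\wedge 1})$ controllable, or more simply run the fixed-point estimate (\ref{Eqn:QWX}) on short intervals: using $|F_\varepsilon(s)|\le \varepsilon^{p-1}|s|$ is $\varepsilon$-dependent, so instead I would use the sublinear bound $F_\varepsilon(s)\le s_+^p+C\varepsilon^p$ together with Lemma \ref{Lem:LamV} and Lemma \ref{Technical1}(2) of Bihari type to get $\|w_\varepsilon(t)\|_{L^\I}\le \B(t)\|u_0\|+\big((1-p)\vU\, t\big)^{1/(1-p)}+o(1)$, hence a bound independent of $\varepsilon$ on each $[0,\t_0]$ (and thus, by the semigroup argument, on $[0,\I)$). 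Then $u\triangleq\LL^{-1}w$ is well-defined, $0\le u$, and $u(t)\in BC_a$ for each $t$ by the weight bound. Monotone (dominated) convergence in the integral $\CK_{(\LL u_0,F_\varepsilon)}w_\varepsilon$ — valid because $F_\varepsilon(w_\varepsilon)\downarrow$ pointwise to some limit dominated by $w^p\in L^1_{loc}$ against the kernels $G$ and $B$, which are positive and integrable — passes to the limit and yields $w=\LL\CG(t)u_0+\LL\int_0^t\CG(t-\t)\CB(\LL^{-p}V\,(\LL^{-1}w)^p\,\LL^{\,?})\,d\t$; unwinding the weights, $u=\CM u$ on $Q_\I$, i.e. $u$ is a mild solution.

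Two technical points need care. The first is \emph{continuity in time}, $u\in\CZ_{\I,a}$: pointwise monotone limits need not be continuous, so I would instead verify that the convergence $w_\varepsilon\to w$ is uniform on compact time intervals in the $\|\cdot\|_{\CZ_{\t_0,a}}$ norm. This follows from Dini's theorem once I know $w$ is continuous, which in turn I would get by showing $w$ itself satisfies the integral equation with a \emph{continuous} right-hand side: the map $t\mapsto \CG(t)u_0$ is continuous, and the Duhamel term is continuous because its integrand is dominated uniformly (by the $\varepsilon$-uniform bound above) and $\CG(t-\t)\CB$ depends continuously on $t$ in operator norm on $BC_a$ by Proposition \ref{Prop:BoundBG}(2) — a standard $\varepsilon/3$ argument then closes continuity. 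The second point is making sure the limit is a genuine mild solution and not merely a subsolution: here I would additionally produce $w$ as the limit from \emph{below} as well, using that $w_\varepsilon$ is also a subsolution of the un-truncated equation (since $F_\varepsilon(s)\ge s_+^p$ forces $\CK_{(\LL u_0,F_\varepsilon)}w_\varepsilon\ge$ the un-truncated $\CK$ evaluated at $w_\varepsilon$ only after we know $w_\varepsilon\ge 0$, which we do), so the monotone limit is squeezed to be an exact solution.

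The main obstacle I anticipate is precisely controlling the passage to the limit in the nonlinear term on the \emph{unbounded} domain with the \emph{unbounded} potential $V$: one must ensure that $\LL^{-p}V\,F_\varepsilon(w_\varepsilon)$ stays dominated in a space against which $\CB$ and $\CG(t-\t)$ act boundedly, uniformly in $\varepsilon$ and locally uniformly in $t$. This is exactly where hypothesis (\ref{HypAL}), $a\ge\S_+/(1-p)$, is used: Lemma \ref{Lem:LamV} converts the growth of $V$ into the weight $\LL^{-1}$, so that $\|\CG(t-\t)\CB(\LL^{-p}V F_\varepsilon(w_\varepsilon))\|_{R,a}\lesssim \B(t-\t)\|w_\varepsilon\|_{L^\I}^p$ with a constant independent of $\varepsilon$, and the dominated-convergence argument goes through. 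The rest — positivity, the weight estimate placing $u(t)$ in $BC_a$, and the global-in-time extension — is then routine, relying on Proposition \ref{Prop:wcomp}'s $u_0$-independent local existence time together with the semigroup property of $\CK$.
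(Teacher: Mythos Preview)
Your overall strategy---approximate $s\mapsto s^p$ by Lipschitz $F_\varepsilon$, solve via Proposition~\ref{Prop:wcomp}, order the approximants via Proposition~\ref{Prop:CompW}, and pass to a monotone limit---is the paper's, but the paper executes it with one extra device that simplifies matters considerably: it also perturbs the \emph{initial data}, taking $u_{0,m}=u_0+\tfrac{1}{m}\LL^{-1}$. By Lemma~\ref{Lem:Gt1} this forces $w_m\geq e^{-t}/m$, so on each $Q_{\t_0}$ and for $m\geq e^{\t_0}$ one has $w_m\geq m^{-2}$, whence $F_m(w_m)=w_m^p$ \emph{exactly}. Thus every $w_m$ is already a solution of the genuine (untruncated) equation, merely with shifted data; the sequence is \emph{decreasing} in $m$ and automatically bounded above by $w_N$. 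No Bihari-type a~priori bound is required, and there is nothing to analyse in the limit of the nonlinearity because $F_m(w_m)=w_m^p$ for all large $m$---only the shift $1/m$ disappears.

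Your route (fixed initial data, $w_\varepsilon$ increasing as $\varepsilon\downarrow0$, uniform upper bound via Bihari) can be made to work, but two points need correcting. First, your $F_\varepsilon$ is discontinuous: with slope $p\varepsilon^{p-1}$ through the origin one gets $F_\varepsilon(\varepsilon^-)=p\varepsilon^p\ne\varepsilon^p$; the slope must be $\varepsilon^{p-1}$ (this is the paper's $F_m$ with $\varepsilon=m^{-2}$). Second, and more importantly, the inequality $F_{\varepsilon}\geq(\cdot)_+^p$ is false: on $[0,\varepsilon]$ one has $\varepsilon^{p-1}s\leq s^p$, so in fact $F_\varepsilon\leq(\cdot)_+^p$. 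Consequently each $w_\varepsilon$ is a \emph{sub}solution of the untruncated problem, not a supersolution, and your ``squeeze from below'' paragraph is backwards and unnecessary: you already approach from below, and passing $\varepsilon\downarrow0$ in $w_\varepsilon=\CK_{F_\varepsilon}w_\varepsilon$ via monotone convergence (using $F_\varepsilon(w_\varepsilon)\uparrow w^p$ and the Bihari bound for a dominating function) gives $w=\CK w$ directly. The paper's data-shift trick sidesteps both the need for the $\varepsilon$-uniform upper bound and the analysis of $\lim_\varepsilon F_\varepsilon(w_\varepsilon)$.
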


\begin{remark}\label{Rem:Existence}
In this theorem, if $u_0\in BC_a$ with $a<a_0\triangleq\frac{\S_+}{1-p}$, i.e.\ (\ref{HypAL}) is not true, we still can apply the result of this theorem to get a global mild solution 
\[
0\leq u\in\CZ_{\I,a_0},
\]
because $u_0\in BC_{a_0}$. The path $t\mapsto u(t)$ is continuous in $BC_{a_0}$ but it is not continuous when considered in $BC_a$. In fact, as will be seen in the next section, the solution lies in $BC_{a_0}\setminus BC_a$ for all $t>0$. In particular, if $\S>0$ and $a\leq0$, the unbounded sublinear source induce an instantaneous growth even when the initial condition is a decay (or bounded) function.

\end{remark}

\begin{proof}[\textbf{Theorem \ref{Thm:Existence}}]
For each $m\in\BN$, let $F_m$ be a function satisfying (\ref{Assump:Phi}) and $F_m(s)=s^p$ for all $s\geq m^{-2}$. One can choose for example,
\[
F_m(s)=\begin{cases}\displaystyle
m^{2(1-p)}s&\mbox{if $0\leq s\leq m^{-2}$},\\
\vspace{-10pt}\\
s^p&\mbox{if $s\geq m^{-2}$},\\
\vspace{-10pt}\\
-F_m(-s)&\mbox{if $s<0$}.
\end{cases}
\]

Fix $u_0\in BC_a$ and let $u_{0,m}=u_0+\frac{1}{m\LL}\in BC_a$. Let $0\leq w_m\in\CZ_{\I,0}$ be the unique mild solution of (\ref{Eqn:Komg}) with $F,u_0$ replaced by $F_m,u_{0,m}$. This means $w_m$ satisfies 
\begin{align}
w_m(t)=\LL\CG(t)u_{0,m}+\LL\int_0^t\CG(t-\t)\CB\left(\LL^{-p}VF_m(w_m)\right)(\t)d\t.
\end{align}
Since $F_m(w_m)\geq0$ and $u_{0,m}\geq\frac{1}{m\LL}$, we have by Lemma \ref{Lem:Gt1} that
\begin{align}
w_m\geq\LL\CG(t)u_{0,m}\geq\frac{e^{-t}}{m}\quad(m\geq1).
\end{align}

\begin{claim}
For each $\t_0>0$, there is $N>0$ such that $\{w_m\}_{N}^\I$ is pointwise non-increasing on $Q_{\t_0}$.
\end{claim}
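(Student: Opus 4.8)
The plan is to establish monotonicity by combining the comparison principle (Proposition \ref{Prop:CompW}) with a careful choice of the truncation level $N$, depending on $\t_0$. The key observation is that on the time interval $[0,\t_0]$ every approximate solution $w_m$ (restored to $u_m = \LL^{-1}w_m$) satisfies a uniform upper bound. Indeed, by Lemma \ref{Technical1}(2) applied to a comparison with the spatially homogeneous maximal-type solution, or directly by iterating the mild equation and using Lemma \ref{Lem:LamV}, one gets $\|w_m\|_{\CZ_{\t_0,0}}\leq M$ for a constant $M=M(\t_0)$ independent of $m$; in particular $w_m(x,t)\leq M$ pointwise on $Q_{\t_0}$. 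Choose $N$ so large that $e^{-\t_0}/N$ — no, rather: choose $N$ so that $N^{-2}\leq$ (a lower bound on $w_m$ is not directly useful); the right choice is $N$ so large that $N^{-2} < e^{-\t_0}/N$ fails — let me restate: pick $N$ with $N^{-2}\le e^{-\t_0}/m$ for... The clean statement is: since $w_m\geq e^{-t}/m\geq e^{-\t_0}/m$ on $Q_{\t_0}$, choosing $m$ with $m^{-2}\leq e^{-\t_0}/m$, i.e. $m\geq e^{\t_0}$, guarantees $w_m\geq m^{-2}$ on $Q_{\t_0}$, so that $F_m(w_m)=w_m^p$ there. Thus for $m\geq N:=\lceil e^{\t_0}\rceil$, each $w_m$ actually solves the \emph{untruncated} equation \eqref{Eqn:wp} with initial datum $u_{0,m}$ on $Q_{\t_0}$.

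With this reduction, monotonicity is a direct application of comparison. First I would record that $u_{0,m+1}=u_0+\frac{1}{(m+1)\LL}\leq u_0+\frac{1}{m\LL}=u_{0,m}$, so the initial data are non-increasing in $m$. Next, for $m\geq N$ both $w_{m+1}$ and $w_m$ satisfy $w=\CK_{(u_{0,\cdot},F_k)}w$ with $F_k(w)=w^p$ on $Q_{\t_0}$ (any $k\geq N$), hence both are simultaneously a subsolution and a supersolution of the \emph{same} equation $w=\CK_{(\,\cdot\,,F_N)}w$ once we note $F_N(w_{m})=w_m^p=F_m(w_m)$ for $m\ge N$ (here one uses $w_m\ge m^{-2}\ge N^{-2}$). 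Then Proposition \ref{Prop:CompW}, with $w:=w_m$ (supersolution, datum $u_{0,m}$) and $\omg:=w_{m+1}$ (subsolution, datum $u_{0,m+1}\leq u_{0,m}$), yields $w_m\geq w_{m+1}$ on $Q_{\t_0}$. This gives the claimed pointwise monotonicity of $\{w_m\}_{m\geq N}$ on $Q_{\t_0}$.

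The main obstacle, and the point needing the most care, is verifying that $w_m\geq m^{-2}$ on $Q_{\t_0}$ so that the truncation $F_m$ is inactive — this is what makes all $w_m$ (for $m\geq N$) solutions of a common equation to which Proposition \ref{Prop:CompW} applies cleanly. This follows from the already-established lower bound $w_m\geq e^{-t}/m\geq e^{-\t_0}/m$ on $Q_{\t_0}$ (a consequence of $u_{0,m}\geq \frac{1}{m\LL}$ and Lemma \ref{Lem:Gt1}) together with the elementary inequality $e^{-\t_0}/m\geq m^{-2}$ whenever $m\geq e^{\t_0}$; hence $N=\lceil e^{\t_0}\rceil$ works. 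A secondary technical point is that Proposition \ref{Prop:CompW} is stated for a fixed Lipschitz $F$, whereas here $w_m$ and $w_{m+1}$ are a priori attached to different truncations $F_m,F_{m+1}$; one resolves this by observing that on $Q_{\t_0}$ both coincide with the untruncated nonlinearity, so one may apply the proposition with the single function $F_N$, for which $w_m\geq\CK_{(u_{0,m},F_N)}w_m$ and $w_{m+1}\leq\CK_{(u_{0,m+1},F_N)}w_{m+1}$ hold as genuine (in fact exact) super/subsolution inequalities.
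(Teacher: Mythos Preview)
Your approach is essentially the paper's: take $N\geq e^{\t_0}$ so that $w_m\geq e^{-t}/m\geq m^{-2}$ on $Q_{\t_0}$ for every $m\geq N$, whence the truncation is inactive and comparison (Proposition \ref{Prop:CompW}) applies between consecutive terms with ordered initial data. The core is right, but there is a genuine slip in the last step.

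You try to run the comparison with the \emph{single} nonlinearity $F_N$, justifying this by ``$w_m\ge m^{-2}\ge N^{-2}$''. That inequality is backwards: for $m\ge N$ one has $m^{-2}\le N^{-2}$, so $w_m\ge m^{-2}$ does \emph{not} give $w_m\ge N^{-2}$, and in particular you cannot conclude $F_N(w_{m+1})=w_{m+1}^p$. Thus $w_{m+1}\le\CK_{(u_{0,m+1},F_N)}w_{m+1}$ is not established, and Proposition \ref{Prop:CompW} does not directly apply with $F_N$. The fix is immediate and is exactly what the paper does: use the \emph{larger} index as the common truncation. For $\ell>m\ge N$ one has $w_m\ge m^{-2}>\ell^{-2}$, so $F_\ell(w_m)=w_m^p=F_m(w_m)$, i.e.\ $w_m=\CK_{(u_{0,m},F_\ell)}w_m$; and $w_\ell=\CK_{(u_{0,\ell},F_\ell)}w_\ell$ by definition. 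Now Proposition \ref{Prop:CompW} (with the fixed Lipschitz $F_\ell$) and $u_{0,\ell}\le u_{0,m}$ give $w_\ell\le w_m$ on $Q_{\t_0}$. Replacing $F_N$ by $F_{m+1}$ in your argument would already suffice.
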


\begin{proof}
Choose $N\geq e^{\t_0}$. Let $(x,t)\in Q_{\t_0}$. If $m\geq N$ then $m\geq e^t$ so
\begin{align}\label{IdenWm1}
w_m\geq\frac{1}{m^2}\quad(\mbox{on $Q_{\t_0}$}).
\end{align}
By the choice of $F_m$, we have $F_m(s)=s^p$ for all $s\geq\frac{1}{m^2}$. This implies
\begin{align}\label{IdenWm2}
\begin{cases}
F_m(w_m)=w_m^p,\,\,\mbox{and}\\
F_\ell(w_m)=w_m^p=F_m(w_m)
\end{cases}
\end{align}
on $Q_{\t_0}$ for all $\ell>m\geq N$. The latter means that $w_m,w_\ell$ satisfy $w_m=\CK_{(u_{0,m},F_\ell)}w_m$ and $w_\ell=\CK_{(u_{0,\ell},F_\ell)}$ on $Q_{\t_0}$. Since $u_{0,\ell}\leq u_{0,m}$, it follows by Proposition (\ref{Prop:CompW}) that $w_\ell\leq w_m$ on $Q_{\t_0}$. \QED
\end{proof}

In view of the claim, we can define the function $w:Q_\I\to\R$ by the pointwise limit:
\begin{align}
w(x,t)=\lim_{m\to\I}w_m(x,t).
\end{align}
Clearly 
\[
w\in L^\I_{loc}\left([0,\I);L^\I(\R^n)\right)\quad\mbox{and}\quad w\geq0.
\]

\begin{claim}
The function $w$ satisfies (\ref{Eqn:wp}) on $Q_{\I}$.
\end{claim}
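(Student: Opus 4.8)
The plan is to pass to the limit $m\to\infty$ in the integral identity $w_m=\CK_{(u_{0,m},F_m)}w_m$ and recognise the limit as equation (\ref{Eqn:wp}). Fix $\t_0>0$ and work on $Q_{\t_0}$; by the previous claim, for $m\ge N$ the sequence $\{w_m\}$ is pointwise non-increasing and, by (\ref{IdenWm1})–(\ref{IdenWm2}), we have $F_m(w_m)=w_m^p$ there, so the identity for $w_m$ reads
\begin{align*}
w_m(t)=\LL\CG(t)u_{0,m}+\LL\int_0^t\CG(t-\t)\CB\left(\LL^{-p}Vw_m^p\right)(\t)\,d\t.
\end{align*}
First I would treat the linear term: since $u_{0,m}=u_0+\frac1{m\LL}\to u_0$ with $0\le \frac1{m\LL}\le\LL^{-1}/m$, positivity of $\CG(t)$ (Lemma \ref{Lem:BasicBG}) and $\CG(t)\LL^{-1}\le\th^{-1}\LL^{-1}$ (Proposition \ref{Prop:BoundBG}) give $0\le\LL\CG(t)u_{0,m}-\LL\CG(t)u_0\le\th^{-1}/m\to0$, uniformly on $Q_{\t_0}$.

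For the nonlinear term I would use monotone/dominated convergence under the integral. Since $w_m\downarrow w$ pointwise and $x\mapsto x^p$ is increasing, $w_m^p\downarrow w^p$ pointwise; moreover all $w_m$ (for $m\ge N$) are bounded above by $w_N$, which by Proposition \ref{Prop:wcomp} lies in $\CZ_{\t_0,0}$, hence $\LL^{-p}Vw_m^p\le\LL^{-p}Vw_N^p$ and the right-hand side is controlled using Lemma \ref{Lem:LamV} by $\vU\B(t)\|w_N\|_{\CZ_{\t_0,0}}^p\,\LL^{-1}$, an $L^\infty_{loc}$-in-$\t$, locally $\LL^{-1}$-bounded majorant. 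Positivity of $\CG(t-\t)\CB$ then lets me apply the dominated convergence theorem in $(x,\t)$ (after testing against a point $x$ and using the kernel representation $G(\cdot,t-\t)\ast B\ast(\cdot)$) to pass the limit inside, obtaining
\begin{align*}
w(t)=\LL\CG(t)u_0+\LL\int_0^t\CG(t-\t)\CB\left(\LL^{-p}Vw^p\right)(\t)\,d\t\quad\mbox{on $Q_{\t_0}$},
\end{align*}
which is exactly (\ref{Eqn:wp}) on $Q_{\t_0}$. Since $\t_0>0$ is arbitrary, the identity holds on $Q_\I$.

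The main obstacle I anticipate is justifying the interchange of limit and integral rigorously given that the convergence $w_m\to w$ is only pointwise (and a priori only locally bounded), not uniform or monotone as a path in $\CZ_{\t_0,0}$; the non-Lipschitz map $x\mapsto x^p$ at $0$ is harmless here precisely because the uniform lower bound $w_m\ge e^{-t}/m$ and the monotonicity confine the iterates to a region where $x^p$ is continuous and monotone, but one must still produce the uniform-in-$m$ dominating function (supplied by $w_N$ and Lemma \ref{Lem:LamV}) and check that $\CG(t-\t)\CB$ acting on it is finite for each fixed $(x,t)$, which follows from $\CB\LL^{-1}\lesssim\LL^{-1}$ and $\CG(t-\t)\LL^{-1}\lesssim\LL^{-1}$. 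A minor point to record is that continuity of the limit, i.e.\ $w\in\CZ_{\I,a}$ rather than merely $L^\infty_{loc}$, is not asserted in this claim and will be recovered afterwards from the fixed-point equation itself.
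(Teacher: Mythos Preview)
Your proposal is correct and follows essentially the same route as the paper: fix $\t_0$, use that $F_m(w_m)=w_m^p$ for $m\ge N$, split off the linear term $\LL\CG(t)(u_{0,m}-u_0)=\frac{1}{m}\LL\CG(t)\LL^{-1}\to0$, and pass to the limit in the nonlinear integral using the monotone sequence $w_m^p\downarrow w^p$ with $w_N^p$ as integrable majorant. The only cosmetic difference is that the paper bounds the majorant via $w_N^p\le N^{2(1-p)}w_N$ (exploiting $w_N\ge N^{-2}$) and invokes the monotone convergence theorem explicitly, whereas you phrase it through Lemma~\ref{Lem:LamV} and dominated convergence; both yield the same finiteness and the same limit.
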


\begin{proof}
Let $\t_0>0$ and $m\geq N\geq e^{\t_0}$. Then we have, on $Q_T$, by (\ref{IdenWm2}) that
\begin{align}\label{IdenWm3}
w_m(t)&=\LL\CG(t)u_0+\frac{1}{m}\LL\CG(t)\LL^{-1}+\LL\int_0^t\CG(t-\t)\CB\left(\LL^{-p}Vw_m^p\right)(\t)d\t\quad(\mbox{on $Q_{\t_0}$}).
\end{align}
By the proof the first claim, we have $w_N\geq 1/N^{2}$ hence $w_N^p=w_N^{p-1}w_N\leq N^{2(1-p)}w_N$. This implies that the integral on the right hand side of (\ref{IdenWm3}) when $m=N$ is convergent:
\begin{align}
\LL\int_0^t\CG(t-\t)\CB\left[\LL^{-p}Vw_N^p\right](\t)\,d\t\leq\vU N^{2(1-p)}\t_0\B(\t_0)\|w_{N}\|_{\CZ_{\t_0,0}}<\I.
\end{align}

We take $m\to\I$ in (\ref{IdenWm3}) pointwise. The left hand side converges to $w$. On the right hand side, the first term is constant while the second term converges to zero. For the third term, we apply the monotone convergence theorem to conclude that it converges to $\LL\int_0^t\CG(t-\t)\CB\left(\LL^{-p}Vw^p\right)(\t)\,d\t$. Thus, as $m\to\I$,
\begin{align}
w=\LL\CG(t)u_0+\LL\int_0^t\CG(t-\t)\CB\left(\LL^{-p}Vw^p\right)(\t)\,d\t\quad(\mbox{on $Q_{\t_0}$}).
\end{align}
This is true for arbitrary $\t_0>0$ so the claim is true.\QED
\end{proof}

We finish the proof of Theorem \ref{Thm:Existence}. We set $u=\LL^{-1}w\geq0$. It was shown that $\LL u(t)\in L^\I$ and $u$ satisfies $u=\CM u$ on $Q_\I$. Using Proposition \ref{Prop:BoundBG} (2), one can readily check that $\CM u(t)\in BC_a$. Finally, one can argue as in Proposition \ref{Prop:wcomp} to find that $t\mapsto\CM u(t)$ is continuous. Therefore $u=\CM u\in C([0,\I);BC_a)$ is a mild solution of the Cauchy problem (\ref{Eqn:Main}).\QED
\end{proof}

\section{Lower grow-up rate}\label{Sec:Grow}

In this section, we prove a lower grow-up result for solutions of (\ref{Eqn:Main}). We assume that $u_0$ is a non-negative continuous function and $u_0\neq0$. 
We also assume the potential $V$ satisfies (\ref{HypVL}), i.e.\
\begin{align}\label{HypVLp}
V(x,t)\geq\L(t)|x|^\S\quad(|x|\geq1,0<t<\t_0),
\end{align}
where $\L(t)>0$ is a continuous function and $\S\in J_n^+$. Here it is no loss of generality in considering ``$|x|\geq1$" instead of a slightly more general case that ``$|x|\geq R_0$" for some $R_0>0$. See also Remark \ref{Rem:AftLow} below.

\begin{lemma}\label{Lem:lowert0}

If $0\leq\mu\in\CZ_{T,a}$ satisfies 
\begin{align}\label{GrowLem1}
\mu(t)\geq u_0(x)+\int_0^t\CB\mu(\t)d\t\quad(\mbox{on $Q_T$}),
\end{align}
then, for $\D>1$ and $0<t_0<T$, there is a constant $C_0=C_0(\D,t_0,u_0)>0$ such that 
\begin{align}
\int_0^{t_0}\CB\mu(\t)d\t\geq C_0e^{-\D|x|}\quad(\mbox{on $\R^n$}).
\end{align}
In particular, if $u$ is an s-supersolution of (\ref{Eqn:Main}) and $\mu=e^tu$, then
\begin{align}
\mu(t_0)\geq\CM^s\mu(t_0)\geq C_0e^{-\D|x|}\quad(\mbox{on $\R^n$}).
\end{align}

\end{lemma}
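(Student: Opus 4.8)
The plan is to reduce the statement to a single pointwise lower bound for the function $\CB u_0$, and then extract that bound from the kernel estimate of Lemma \ref{Lem:B1}. First, since $\mu\geq0$ and $\CB$ is a positive operator (Lemma \ref{Lem:BasicBG}), the integral term in (\ref{GrowLem1}) is nonnegative, so (\ref{GrowLem1}) already forces $\mu(t)\geq u_0$ on $Q_T$. Monotonicity of $\CB$ then gives $\CB\mu(\t)\geq\CB u_0$ for every $\t\in[0,t_0]$, and integrating over $[0,t_0]$,
\begin{align*}
\int_0^{t_0}\CB\mu(\t)\,d\t\geq t_0\,\CB u_0\quad(\mbox{on }\R^n).
\end{align*}
Hence it suffices to produce a constant $C=C(\D,u_0)>0$ with $\CB u_0(x)\geq Ce^{-\D|x|}$ for all $x\in\R^n$; one then takes $C_0=t_0C$.

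For this I would use the convolution representation $\CB u_0=B\ast u_0$ together with the fact that $u_0\geq0$ is continuous and not identically zero: choose $x_0\in\R^n$, $r>0$, $c>0$ with $u_0\geq c$ on the closed ball $K=\{|y-x_0|\leq r\}$. Since $B>0$,
\begin{align*}
\CB u_0(x)=\int_{\R^n}B(x-y)u_0(y)\,dy\geq c\int_K B(x-y)\,dy=:cI(x),
\end{align*}
where $I$ is continuous and strictly positive on $\R^n$. The crucial point is the decay of $I$ at infinity. For $|x|\geq|x_0|+r+1$ we have $|x-y|\geq1$ for all $y\in K$, so Lemma \ref{Lem:B1} gives $B(x-y)\geq b_0|x-y|^{\frac{1-n}{2}}e^{-|x-y|}$; bounding $|x-y|\leq|x|+|x_0|+r$ (which, as $\frac{1-n}{2}\leq0$, only decreases both $|x-y|^{\frac{1-n}{2}}$ and $e^{-|x-y|}$) yields
\begin{align*}
I(x)\geq b_0|K|\,(|x|+|x_0|+r)^{\frac{1-n}{2}}e^{-|x_0|-r}\,e^{-|x|}.
\end{align*}
Because $\D>1$, the continuous positive function $x\mapsto I(x)e^{\D|x|}$ is therefore bounded below by a multiple of $(|x|+|x_0|+r)^{\frac{1-n}{2}}e^{(\D-1)|x|}$, which tends to $\I$ as $|x|\to\I$; hence $I(x)e^{\D|x|}$ attains a positive infimum $m>0$ over $\R^n$. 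Thus $\CB u_0(x)\geq cm\,e^{-\D|x|}$, the desired bound with $C=cm$.

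For the "in particular" assertion, suppose $u$ is an $s$-supersolution and set $\mu=e^tu$, so $\mu\geq\CM^s\mu$ on $Q_T$. Dropping the nonnegative nonlinear term $\int_0^te^{(1-p)\t}\CB(V\mu^p)(\t)\,d\t$ in (\ref{Def:Ms}) shows that $\mu$ satisfies (\ref{GrowLem1}), so the first part applies and $\int_0^{t_0}\CB\mu(\t)\,d\t\geq C_0e^{-\D|x|}$. Dropping the same nonnegative term at $t=t_0$ then gives $\CM^s\mu(t_0)\geq\int_0^{t_0}\CB\mu(\t)\,d\t\geq C_0e^{-\D|x|}$, while $\mu(t_0)\geq\CM^s\mu(t_0)$ holds by definition of $s$-supersolution; this is exactly the stated chain of inequalities.

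The step I expect to be the main obstacle is the passage from the pointwise kernel bound of Lemma \ref{Lem:B1} to the uniform exponential lower bound for the convolution $I(x)$: one must handle the polynomial factor $|x-y|^{\frac{1-n}{2}}$ carefully, and it is precisely the strict inequality $\D>1$ (not $\D=1$) that allows this factor to be absorbed when $n\geq2$. Everything else is positivity and monotonicity of $\CB$ plus a compactness argument applied to the continuous positive function $I(x)e^{\D|x|}$.
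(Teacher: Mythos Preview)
Your proof is correct, and it takes a slightly more direct route than the paper's. The paper argues in two stages: from $\mu\geq u_0$ it first extracts a uniform lower bound $\mu(y,t)\geq\A_0\triangleq\frac{t_0}{2}\min_{|x|\leq2}\CB u_0$ on the spacetime cylinder $\{|y|\leq1\}\times[t_0/2,t_0]$, and then convolves this constant against the kernel estimate of Lemma~\ref{Lem:B1} over the half-interval $[t_0/2,t_0]$ to handle $|x|>2$. You instead integrate $\CB\mu(\t)\geq\CB u_0$ over the full interval $[0,t_0]$ in one stroke and bound $\CB u_0$ itself from below by restricting the convolution to a ball $K$ on which $u_0\geq c>0$. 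Both approaches rest on the same ingredients---positivity of $\CB$, the pointwise bound $B\gtrsim|x|^{\frac{1-n}{2}}e^{-|x|}$ from Lemma~\ref{Lem:B1}, and the strict inequality $\D>1$ that absorbs the polynomial prefactor---but yours skips the intermediate cylinder step and makes the factorization $C_0=t_0\cdot C(\D,u_0)$ explicit. The paper's version has the minor advantage of working with a ball centered at the origin (so the triangle-inequality bookkeeping is slightly lighter), while yours accommodates an arbitrary support location $x_0$ for $u_0$ at essentially no extra cost.
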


\begin{proof}
Since $\CB$ is a positive operator and $\mu\geq0$, we have by (\ref{GrowLem1}) that $\mu\geq u_0$ on $Q_T$. Since $u_0\geq0$ and $u_0\neq0$, we have $\CB u_0=B\ast u_0>0$ on $Q_T$. We define
\begin{align*}
\A_0\triangleq\frac{t_0}{2}\min_{|x|\leq2}\CB u_0\quad\Rightarrow\quad\A_0\leq\min_{|x|\leq2,\,t_0/2\leq t\leq t_0}\int_0^t\CB\mu(x,\t)d\t.
\end{align*}
Clearly $\A_0>0$. If $|x|\leq2$ then $\int_0^{t_0}\CB\mu(x,\t)d\t\geq \A_0\geq C_1e^{-\D|x|}$. By (\ref{GrowLem1}), it follows that $\mu(y,t)\geq \A_0$ for all $|y|\leq1$, $t_0/2\leq t\leq t_0$. If $|x|>2$ then we have by Lemma \ref{Lem:B1} that
\begin{align*}
\int_0^{t_0}\CB\mu(x,\t)ds&\geq\int_{t_0/2}^{t_0}\int_{|y|<1}B(x-y)\mu(y,\t)\,dyd\t,\nonumber\\
&\geq \A_0b_0\frac{t_0}{2}\int_{|y|<1}|x-y|^{-\frac{n-1}{2}}e^{-|x-y|}dy\qquad(\because\,|x-y|>1),\nonumber\\
&\geq \A_0b_0\frac{t_0}{2}e^{-(|x|+1)}\int_{|y|<1}|x-y|^{-\frac{n-1}{2}}dy,\nonumber\\
&\geq \A_0b_0\frac{t_0}{2}e^{-|x|}(1+|x|)^{-\frac{n-1}{2}}\geq C_2e^{-\D|x|}.
\end{align*}
Taking $C_0=C_1\wedge C_2$, the desired estimate then follows.\QED
\end{proof}




\begin{lemma}\label{Lem:BesselLower}

Assume $V$ satisfies (\ref{HypVL}). There are a constant $\V_0>0$ and a non-increasing function $\eta:\R\to\R$, $\eta>0$, 
such that the following properties hold. For any $P\geq0$, we have
\begin{align}\label{Est:BeP}
\CB\left(Ve^{-P|\cdot|}\right)\geq \eta(P)\L(t)e^{-P|x|}\quad(\mbox{on $Q_{\t_0}$}),
\end{align}
and for any $d\geq0$ we have
\begin{align}\label{Est:BVd}
\CB\left(V|x|^d\right)\geq\V_0\L(t)|x|^{\S+d}\quad(\mbox{on $Q_{\t_0}$}).
\end{align}
If $d$ is bounded, then $\V_0>0$ can be chosen independent of $d$.

\end{lemma}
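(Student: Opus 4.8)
The plan is to prove the two estimates separately, the first by a direct kernel computation and the second by decomposing the integral over an annular region where the lower bound on $V$ in (\ref{HypVLp}) applies.

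\textbf{Step 1: The exponential estimate (\ref{Est:BeP}).} Fix $t\in(0,\t_0)$ and $x\in\R^n$. Using Lemma \ref{Lem:B1}, $B(x-y)\geq b_0\phi_0(x-y)e^{-|x-y|}$, and (\ref{HypVLp}), which gives $V(y,t)\geq\L(t)|y|^\S$ for $|y|\geq1$, I would write
\begin{align*}
\CB(Ve^{-P|\cdot|})(x,t)=\int_{\R^n}B(x-y)V(y,t)e^{-P|y|}dy\geq b_0\L(t)\int_{|y|\geq1}\phi_0(x-y)e^{-|x-y|}|y|^\S e^{-P|y|}dy.
\end{align*}
Restricting the integral to $y$ in the ball $B(x,1)$ intersected with $\{|y|\geq1\}$ (or, when $|x|$ is small, to a fixed ball near the origin — handled as in Lemma \ref{Lem:lowert0}), one has $e^{-|x-y|}\geq e^{-1}$ and $e^{-P|y|}\geq e^{-P(|x|+1)}=e^{-P}e^{-P|x|}$ there, and $|y|^\S\geq(\,|x|-1)_+^\S$ or a fixed positive constant. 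Collecting the $|x|$-independent constants and setting $\eta(P)\triangleq c\,e^{-P}$ for a suitable $c>0$ (adjusting so $\eta$ is non-increasing and positive; one may truncate/modify for small $|x|$ exactly as in Lemma \ref{Lem:lowert0} so that the clean form $\eta(P)\L(t)e^{-P|x|}$ holds for all $x$) yields (\ref{Est:BeP}). The time factor $\L(t)$ comes out cleanly because (\ref{HypVLp}) holds with $\L(t)$ for every $t\in(0,\t_0)$.

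\textbf{Step 2: The polynomial estimate (\ref{Est:BVd}).} For $|x|$ large, decompose $\R^n$ and keep only $y$ in the annulus $A_x\triangleq\{y:|x|/2\leq|y|\leq|x|,\ |x-y|\leq 1\}$; for $|x|$ bounded, again argue with a fixed ball as in Lemma \ref{Lem:lowert0}. On $A_x$ we have $|y|\geq1$ (for $|x|\geq2$), so $V(y,t)\geq\L(t)|y|^\S\geq\L(t)(|x|/2)^\S$ and $|y|^d\geq(|x|/2)^d$, while $B(x-y)\geq b_0\phi_0(x-y)e^{-|x-y|}\geq b_0 e^{-1}\min_{|z|\leq1}\phi_0(z)>0$ (the minimum of $\phi_0$ over the unit ball is a positive constant, using the explicit form of $\phi_0$). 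Since $|A_x|$ is bounded below by a positive constant independent of $x$ (it contains a ball of fixed radius), we obtain
\begin{align*}
\CB(V|x|^d)(x,t)\geq b_0 e^{-1}\Big(\min_{|z|\leq1}\phi_0(z)\Big)\L(t)\Big(\tfrac{|x|}{2}\Big)^\S\Big(\tfrac{|x|}{2}\Big)^d|A_x|\geq\V_0\L(t)|x|^{\S+d},
\end{align*}
with $\V_0\triangleq b_0 e^{-1}(\min_{|z|\leq1}\phi_0)\,2^{-\S-d}|A_x|$. If $d$ ranges over a bounded set, $2^{-\S-d}$ stays bounded below, so $\V_0$ can be taken uniform in $d$; this gives the final sentence. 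The small-$|x|$ regime contributes a separate positive constant and one takes the minimum, exactly as in Lemma \ref{Lem:lowert0}.

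\textbf{Main obstacle.} The genuinely delicate point is matching the clean multiplicative form — a single factor $\eta(P)\L(t)e^{-P|x|}$ (resp.\ $\V_0\L(t)|x|^{\S+d}$) valid for \emph{all} $x\in\R^n$, not just large $|x|$ — when (\ref{HypVLp}) only controls $V$ from below for $|x|\geq1$. The resolution is the same two-case device used in Lemma \ref{Lem:lowert0}: for $|x|$ in a fixed compact set, $\CB(Ve^{-P|\cdot|})$ is bounded below by a positive constant times $\L(t)$ (since $B\ast(\cdot)>0$ and the integrand is positive on $|y|\geq1$), and this constant dominates $\eta(P)e^{-P|x|}$ after possibly shrinking $\eta$; for $|x|$ large the annulus/ball-near-$x$ argument above applies. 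One also needs $\eta$ to be monotone in $P$, which is automatic from the form $\eta(P)=ce^{-P}$ (or $\eta(P)=c(1+P)^{-N}$ if a polynomial weight is preferred). The rest is routine estimation of the Bessel kernel via Lemma \ref{Lem:B1}.
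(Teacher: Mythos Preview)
Your strategy is sound and yields the lemma, but it differs from the paper's proof in the way the convolution is localized. The paper writes the Bessel potential with the kernel variable centered, i.e.\ $\CB(Ve^{-P|\cdot|})(x)=\int B(y)V(x-y,t)e^{-P|x-y|}\,dy$, and then uses the triangle inequality $e^{-P|x-y|}\geq e^{-P|x|}e^{-P|y|}$ to extract $e^{-P|x|}$ \emph{before} restricting the domain; what remains is a single integral $K(x,P)=b_0\int_{|x-y|\geq1}\phi_0(y)e^{-(1+P)|y|}|x-y|^\S\,dy$, which is then bounded below by a two-case split ($|x|\leq2$ with $|y|\geq4$, versus $|x|\geq2$ with $|y|\leq|x|/2$), giving $\eta(P)=\min\{\eta_1(P),\eta_2(P)\}$ as explicit integrals in $e^{-(1+P)r}$. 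The polynomial estimate is handled by the \emph{same} integral with $e^{-(1+P)|y|}$ replaced by $e^{-|y|}$ and $|x-y|^\S$ by $|x-y|^{\S+d}$, so the case split is reused verbatim. By contrast, you keep $V$ at the integration variable and localize to a small set near $x$ (the unit ball, respectively the annulus $A_x$), using $\phi_0\geq1$ on the closed unit ball and $|A_x|\geq c_n>0$; your $\eta(P)$ then comes out as an explicit exponential $ce^{-cP}$ rather than an integral. Both routes are legitimate; the paper's buys a cleaner, uniform treatment of the two estimates (one integral, one case split), while yours is more hands-on but requires a genuinely separate small-$|x|$ argument because $B(x,1)\cap\{|y|\geq1\}$ degenerates near the origin---your deferral to ``as in Lemma \ref{Lem:lowert0}'' should be replaced by the concrete choice of, say, the shell $\{2\leq|y|\leq3\}$ (on which $V(y,t)\geq\L(t)2^{\S_-}$ and $B(x-y)\geq b_0\phi_0(5)e^{-5}$ for $|x|\leq2$), which then gives the missing constant. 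Note also that your small-$|x|$ bound produces a factor $e^{-3P}$ rather than $e^{-P}$, so $\eta(P)=ce^{-3P}$ is the honest form; this is still positive and non-increasing, so the lemma is unaffected.
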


\begin{proof}
By Lemma \ref{Lem:B1}, the Bessel potential kernel satisfies $B\geq b_0\phi_0(x)e^{-|x|}>0$. Using (\ref{HypVLp}), then we have
\begin{align*}
&\CB\left(Ve^{-P|\cdot|}\right)=\int B(y)V(x-y,t)e^{-P|x-y|}dy,\nonumber\\
&\hphantom{\CB\left(Ve^{-P|\cdot|}\right)}\geq b_0\L(t)\int_{|x-y|\geq1}\phi_0(y)e^{-|y|}|x-y|^\S e^{-P(|x|+|y|)}dy,\nonumber\\
&\hphantom{\CB\left(Ve^{-P|\cdot|}\right)}\geq b_0\L(t)e^{-P|x|}\int_{|x-y|\geq1}\phi_0(y)e^{-(1+P)|y|}|x-y|^\S dy=K\L(t)e^{-P|x|},\\
&\mbox{where}\,\, K\triangleq b_0\int_{|x-y|\geq1}\phi_0(y)e^{-(1+P)|y|}|x-y|^{\S}dy.
\end{align*}
If $|x|\leq2$ and $|y|\geq4$, then $|x-y|\geq|y|-|x|\geq\frac{|y|}{2}>1$, hence 
\begin{align*}
K&\geq b_0\omg_n\int_4^\I\phi_0(r)e^{-(1+P)r}\left(\frac{r}{2}\right)^\S r^{n-1}dr=:\eta_1(P).
\end{align*}
If $|x|\geq2$ and $|y|\leq\frac{|x|}{2}$, then $|x-y|\geq|x|-|y|\geq\max\{1,|y|\}$, hence
\begin{align*}
K&\geq b_0\omg_n\int_0^{1}\phi_0(r)e^{-(1+P)r}r^{\S+n-1}dr
=:\eta_2(P).
\end{align*}
$\eta_1,\eta_2$ are finite because $\int_0^\I\phi_0(r)e^{-(1+P)r}r^{n+\S-1}dr<\I$. Also, $\eta_1,\eta_2>0$. Setting $\eta=\min\{\eta_1,\eta_2\}$, the estimate (\ref{Est:BeP}) then follows.\smallskip

To prove (\ref{Est:BVd}), we employ as above to get
\begin{align*}
\CB\left(V|x|^d\right)\geq\L(t)K,\quad K\triangleq b_0\int_{|x-y|\geq1}\phi_0(y)e^{-|y|}|x-y|^{\S+d}dy.
\end{align*}
If $|x|\leq2$ and $|y|\geq4$ then $|x-y|>1\geq\frac{|x|}{2}$ hence 
\[
K\geq \V_1|x|^{\S+d},\quad\V_1\triangleq b_0\omg_n2^{-(\S+d)}\int_4^\I\phi_0(r)e^{-r}r^{n-1}dr.
\]
If $|x|\geq2$ and $|y|\leq\frac{|x|}{2}$, then $|x-y|\geq\frac{|x|}{2}\geq1$, hence
\[
K\geq\V_2|x|^{\S+d},\quad\V_2\triangleq b_0\omg_n2^{-(\S+d)}\int_0^1\phi_0(r)e^{-r}r^{n-1}dr.
\]
Taking $\V_0=\min\{\V_1,\V_2\}$ the estimate (\ref{Est:BVd}) then follows.\QED
\end{proof}

\begin{remark} \label{Rem:AftLow}
\begin{enumerate}
\item[(i)] Observe that, in the assumption (\ref{HypVLp}), we assume no control of $V(\cdot,t)$ on $B_1(0)$. If (\ref{HypVLp}) holds on $\R^n\times(0,\t_0)$ instead, then we can apply the fact that $\CB(|x|^d)\geq|x|^d$ when $d\in J_n^+$ (Lemma (\ref{Lem:BasicBG}) to deduce that $\V_0=1$. Also note that $\V_0=\V_0(R_0)$ if the  assumption (\ref{HypVLp}) is assumed for $|x|\geq R_0$. 
\item[(ii)] The results of the above lemma and the following theorem can be generalized to a potential $V$ satisfying 
\begin{align}
V(x,t)\geq\sum_{k=1}^N\L_k(t)|x-x_k|^{\S_k}\quad(\forall\,k,|x-x_k|\geq R_k,0<t<\t_0),
\end{align}
where $x_k\in\R^n,\S_k\in J_n^+$, and $\L_k(t)>0$ are continuous functions, for all $k$.
\end{enumerate} 

\end{remark}

We prove the following preliminary version of lower {\it grow-up rate} of solutions of (\ref{Eqn:Main}).

\begin{theorem}[Lower grow-up estimate]\label{Thm:uniflower}

Assume (\ref{HypVL}) (see (\ref{HypVLp}) and $0\leq u_0\in C(\R^n)$, $u_0\neq0$. If $u\geq0$ is an s-supersolution of Eq.\ (\ref{Eqn:Main}) on $Q_{\t_0}$, then $u$ satisfies
\begin{align}
u(x,t)\geq\left(\V_0(1-p)|x|^\S\int_0^t\L(\t)d\t\right)^{\frac{1}{1-p}}\quad(\mbox{on $Q_{\t_0}$}),
\end{align}
where $\V_0>0$ is the constant occurred from the estimate (\ref{Est:BVd}).

\end{theorem}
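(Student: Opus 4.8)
The plan is to combine the pointwise lower bound of Lemma \ref{Lem:lowert0} with the nonlinear integral inequality satisfied by the $s$-supersolution, and then iterate using the Bessel kernel estimate (\ref{Est:BVd}). First I would recall that if $u\geq0$ is an $s$-supersolution of (\ref{Eqn:Main}) on $Q_{\t_0}$ and $\mu=e^tu$, then $\mu\geq\CM^s\mu$ on $Q_{\t_0}$, which in particular gives both
\begin{align}\label{Plan:1}
\mu(t)\geq u_0+\int_0^t\CB\mu(\t)\,d\t\quad\text{and}\quad\mu(t)\geq\int_0^te^{(1-p)\t}\CB\left(V\mu^p\right)(\t)\,d\t\quad(\text{on }Q_{\t_0}).
\end{align}
Fix any $t_0\in(0,\t_0)$ and $\D>1$. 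By Lemma \ref{Lem:lowert0} applied to the first inequality in (\ref{Plan:1}), there is $C_0=C_0(\D,t_0,u_0)>0$ with $\mu(x,t_0)\geq C_0e^{-\D|x|}$, and since $t\mapsto\CB\mu(t)\geq0$ this persists: $\mu(x,t)\geq C_0e^{-\D|x|}$ for all $t\in[t_0,\t_0)$.

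Next I would feed this exponential lower bound into the second inequality of (\ref{Plan:1}) and use the kernel estimate (\ref{Est:BeP}) from Lemma \ref{Lem:BesselLower}: for $t\in[t_0,\t_0)$,
\begin{align}\label{Plan:2}
\mu(x,t)\geq\int_{t_0}^te^{(1-p)\t}\CB\left(V\,(C_0e^{-\D|\cdot|})^p\right)(\t)\,d\t\geq C_0^p\,\eta(\D p)\,e^{-\D p|x|}\int_{t_0}^te^{(1-p)\t}\L(\t)\,d\t.
\end{align}
This already upgrades the decay rate in the exponent from $\D$ to $\D p<\D$. Iterating, after $k$ steps the exponent is $\D p^k\to0$, so by a straightforward induction (keeping careful bookkeeping of the accumulating constants and of the shrinking time origin, which one can fix once and for all at, say, $t_0/2$ after relabelling) one obtains, for each $t$ in a fixed subinterval, a bound of the form $\mu(x,t)\geq c_k e^{-\D p^k|x|}$; taking $k\to\I$ yields a pointwise lower bound $\mu(x,t)\geq c(x,t)>0$ that is bounded below on compact sets uniformly, i.e. effectively $\mu$ is bounded below by a positive quantity with \emph{polynomial} (in fact arbitrarily slow exponential, hence dominating any fixed power) behavior. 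Once $\mu$ is known to be bounded below by a positive constant on every compact spatial set and every time interval $[t_0,\t_0)$, I can restart the argument using (\ref{Est:BVd}) instead of (\ref{Est:BeP}): substituting a lower bound of the form $\mu(x,\t)\geq c|x|^d$ (starting from $d=0$) into the second inequality of (\ref{Plan:1}) and applying (\ref{Est:BVd}) produces $\mu(x,t)\gtrsim|x|^{\S+d}$, so the power grows by $\S$ at each iteration; combined with the nonlinear feedback $\mu\geq\int_{t_0}^te^{(1-p)\t}\CB(V\mu^p)(\t)\,d\t$ and Lemma \ref{Technical1}(2) (Bihari), the fixed point of the recursion $d\mapsto\S+pd$ is $d_\ast=\S/(1-p)$, which is exactly the exponent claimed.

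To make the last step precise and get the sharp constant, I would instead argue directly: let $h(x,t)$ denote the right-hand side of the asserted inequality (so $h=\bigl(\V_0(1-p)|x|^\S\int_0^t\L\bigr)^{1/(1-p)}$ on $Q_{\t_0}$, and note $h\equiv0$ when $|x|<1$ can be handled by the slight enlargement of domain as in Remark \ref{Rem:AftLow}, or one works with $|x|$ replaced by $(1\vee|x|)$ and absorbs the discrepancy). One checks that $\widetilde\mu(x,t):=e^th(x,t)$ — or more cleanly $h$ itself against the $u$-formulation — is an $s$-subsolution: using $\CB(V|x|^\S \cdot |x|^{d})\geq\V_0\L(t)|x|^{\S+d}$ with $d=\S p/(1-p)$ so that $\S+d=\S/(1-p)=\frac{1}{p}\cdot\frac{\S p}{1-p}+\S$... wait, more simply, $(h)^p=\bigl(\V_0(1-p)\int_0^t\L\bigr)^{p/(1-p)}|x|^{\S p/(1-p)}$, and $\S p/(1-p)\in J_n^+$ since $\S\in J_n^+$, so (\ref{Est:BVd}) gives $\CB(V h^p)\geq\V_0\L(t)\bigl(\V_0(1-p)\int_0^t\L\bigr)^{p/(1-p)}|x|^{\S+\S p/(1-p)}$, and $\S+\frac{\S p}{1-p}=\frac{\S}{1-p}$; then $\int_0^te^{(1-p)\t}\CB(Vh^p)(\t)\,d\t\geq\dots$ — here one actually wants to \emph{drop} the $e^{(1-p)\t}\geq1$ factor, compute $\int_0^t\V_0\L(\t)\bigl(\V_0(1-p)\int_0^\t\L\bigr)^{p/(1-p)}d\t=|x|^{\S/(1-p)}$-free part $=\bigl(\V_0(1-p)\int_0^t\L\bigr)^{1/(1-p)}$ by the fundamental theorem of calculus, and also check the linear term $\int_0^t\CB h(\t)\,d\t\geq0$ and $u_0\geq0$ are harmless on the subsolution side — confirming $h\leq\CM^s h$-type inequality, i.e. $h$ is an $s$-subsolution with zero initial data. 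Since $u$ (equivalently $\mu=e^tu$) is an $s$-supersolution with $\mu\geq u_0\geq0$ and, by the iteration above, $\mu$ dominates any fixed positive power of $|x|$ on compacta — in particular $\mu$ is not the trivial solution — a comparison between the $s$-subsolution $e^th$ and the $s$-supersolution $\mu$ (which one proves by the same Gronwall/Bihari scheme used in Proposition \ref{Prop:CompW}, exploiting that $h$ and $\mu$ are ordered at the "seed" time $t_0$ thanks to the exponential lower bound) yields $\mu\geq e^th$ on $Q_{\t_0}$, hence $u\geq h$, which is the claim. The main obstacle I anticipate is the passage from the exponential-in-$|x|$ lower bound to the polynomial one: the iteration $\D\mapsto\D p$ must be carried out with uniform control of the constants $c_k$ so that the $k\to\I$ limit is nontrivial, and one must be careful that each application of (\ref{Est:BeP}) is legitimate (the function $e^{-P|\cdot|}$ lies in the right space and the time origin can be held fixed). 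Once past that, the power-boosting step and the final comparison are routine given Lemma \ref{Lem:BesselLower}, Lemma \ref{Technical1}, and the comparison machinery of Proposition \ref{Prop:CompW}.
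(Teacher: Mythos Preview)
Your iteration strategy --- seed an exponential lower bound via Lemma \ref{Lem:lowert0}, use (\ref{Est:BeP}) to drive the decay exponent $\D p^k\to0$, then switch to (\ref{Est:BVd}) to boost the spatial power toward the fixed point $\S/(1-p)$ --- is exactly the skeleton of the paper's proof (its Steps 1--3). The paper does the bookkeeping you flag as ``the main obstacle'' explicitly: it computes the iterates $\L_m(t)$ via $\L_m'=\L\L_{m-1}^p$, $\L_m(0)=0$, obtaining $\L_m=\L_\ast^{K_m}/L_m$ with $K_m=1+p+\cdots+p^{m-1}$, and bounds $C_m\geq C_0^{p^m}\eta(P)^{K_m}$; taking $m\to\I$ gives $\mu\geq\bigl(\eta(P)(1-p)\L_\ast(t)\bigr)^{q}$, after which a second iteration with (\ref{Est:BVd}) yields $\mu\geq\bigl(\V_0(1-p)|x|^\S\L_\ast(t)\bigr)^q$. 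So far so good.

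There is, however, a genuine gap in your plan at the very end. All of your iteration is in the $\mu$-variable, and from $\mu\geq\bigl(\V_0(1-p)|x|^\S\L_\ast(t)\bigr)^q$ you only get $u=e^{-t}\mu\geq e^{-t}\bigl(\V_0(1-p)|x|^\S\L_\ast(t)\bigr)^q$, with an unwanted $e^{-t}$ factor. The paper removes this in a separate step (its Step 4): it passes via Lemma \ref{Lem:TMisM} to the mild formulation $u\geq\CN u\triangleq\int_0^t\CG(t-\t)\CB(Vu^p)\,d\t$, and iterates once more using $\CG(t)(|x|^d)\geq|x|^d$ from Lemma \ref{Lem:BasicBG}(3). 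Each pass replaces $e^{-t}$ by $e^{-pt}$ (since $(\CI_{m})^p$ carries $e^{-p^{m+1}t}$), and the constant is self-reproducing, so in the limit the exponential disappears and one recovers the claimed bound for $u$. Your outline never switches to the $u$-formulation, so as written it falls short by precisely this $e^{-t}$.

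Your ``direct'' alternative --- show that $h$ is an $s$-subsolution and compare --- does not close the gap, for two reasons. First, the computation you sketch gives only $\CM^s(e^th)\geq h$, not $\CM^s(e^th)\geq e^th$: after dropping $e^{(1-p)\t}\geq1$ you recover $h$ exactly, but you need the extra factor $e^t$. Second, and more seriously, the comparison you would need is not Proposition \ref{Prop:CompW} (which is for Lipschitz nonlinearities $F$) but the sublinear comparison principle, Theorem \ref{Thm:Comp} --- and that theorem \emph{uses} the present Theorem \ref{Thm:uniflower} in an essential way (via the lower bound on the supersolution in its Claim). Invoking it here would be circular.

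One last minor point: your assertion ``$\S p/(1-p)\in J_n^+$ since $\S\in J_n^+$'' fails for $n=1$ (where $J_1^+=\{0\}\cup[1,\I)$), but fortunately it is irrelevant --- estimate (\ref{Est:BVd}) holds for all $d\geq0$, not only $d\in J_n^+$.
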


Let us use the notation
\begin{align}\label{Def:Last}
\L_\ast(t)=\int_0^t\L(\t)\,d\t.
\end{align}

\begin{proof}
Let $\D>1$ be such that $P\triangleq\D p\in(0,1)$. We split the proof into several steps. In Step 1-4, we establish the theorem assuming that 
\begin{align*}
u_0(x)\geq C_0e^{-\D|x|}\quad(\mbox{on $\R^n$}),
\end{align*}
for some constant $C_0>0$. The general case will be proved in Step 5 using lemma (\ref{Lem:lowert0}). \smallskip

{\it Step 1.} Let $\mu=e^tu$ and $q=\frac{1}{1-p}$.

\begin{claim}\label{Claim:Lower}
We have 
\begin{align}
\mu(x,t)\geq\left(\V_0|x|^\S(1-p)\L_\ast(t)\right)^q\quad(\mbox{on $Q_{\t_0}$}).
\end{align}

\end{claim}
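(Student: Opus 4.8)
The plan is to extract from the $s$-supersolution inequality $\mu=e^tu\geq\CM^s\mu$ only two consequences: that $\mu(x,t)\geq u_0(x)\geq C_0e^{-\D|x|}$ on $Q_{\t_0}$ (the running hypothesis of Steps~1--4), and that $\mu$ dominates the pure nonlinear term,
\[
\mu\geq\CN^s\mu,\qquad \CN^s\nu\triangleq\int_0^te^{(1-p)\t}\CB\bigl(V\nu^p\bigr)(x,\t)\,d\t ,
\]
which holds because the other two terms of $\CM^s\mu$ are nonnegative. Since $\CB$ is a positive operator and $s\mapsto s^p$ is nondecreasing on $[0,\I)$, the operator $\CN^s$ is monotone; hence, if $\nu_0\leq\mu$ and $\nu_{k+1}=\CN^s\nu_k$, then $\nu_k\leq\mu$ for all $k$ by induction, and one may pass to the limit $k\to\I$ at the end. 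I will run two such iterations: one to destroy the exponential decay inherited from $u_0$, and one to grow the correct radial power. Write $q=\frac{1}{1-p}$ and $s_k=\frac{1-p^k}{1-p}$ (so $s_k\uparrow q$), and note the elementary identity $\int_0^t\L_\ast(\t)^{\gamma}\L(\t)\,d\t=\L_\ast(t)^{\gamma+1}/(\gamma+1)$ and $e^{(1-p)\t}\geq1$, which are used repeatedly.

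\emph{Phase 1: flattening the exponential.} Start from $\mu_0\triangleq C_0e^{-\D|\cdot|}\leq\mu$ and set $\mu_{k+1}=\CN^s\mu_k$. Applying the lower bound (\ref{Est:BeP}) of Lemma~\ref{Lem:BesselLower} with $P=\D p^{k+1}$ gives inductively
\[
\mu_k(x,t)\geq c_k\,e^{-\D p^{k}|x|}\,\L_\ast(t)^{\,s_k},\qquad c_{k+1}=\frac{\eta(\D p^{k+1})}{s_{k+1}}\,c_k^{\,p},\quad c_0=C_0 .
\]
Since $\eta$ is positive and non-increasing and $s_{k+1}\leq q$, we have $c_{k+1}\geq\frac{\eta(\D p)}{q}c_k^{\,p}$; comparing with the attracting fixed point of $c\mapsto\frac{\eta(\D p)}{q}c^{\,p}$ yields a uniform lower bound $c_k\geq c_\star>0$. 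Letting $k\to\I$ (so $p^{k}\to0$ and $s_k\to q$) gives the \emph{flat-in-$x$} estimate $\mu(x,t)\geq c_\star\,\L_\ast(t)^{\,q}$ on $Q_{\t_0}$.

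\emph{Phase 2: building the radial growth.} Restart from $\nu_0\triangleq c_\star\L_\ast(t)^{\,q}\leq\mu$ and set $\nu_{k+1}=\CN^s\nu_k$. Now use the lower bound (\ref{Est:BVd}) — crucially with a single $\V_0$ valid for all the exponents arising, since these stay in the bounded range $[0,\S q]$ — to obtain inductively
\[
\nu_k(x,t)\geq \A_k\,|x|^{\,\S s_k}\,\L_\ast(t)^{\,q},\qquad \A_{k+1}=\V_0(1-p)\,\A_k^{\,p},\quad \A_1=\V_0(1-p)\,c_\star^{\,p};
\]
the power of $\L_\ast$ stays equal to $q$ precisely because $qp+1=q$. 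The recursion $\A\mapsto\V_0(1-p)\A^{\,p}$ is an affine contraction in $\log$, so $\A_k\to\A_\infty=(\V_0(1-p))^{q}$. Passing to the limit $k\to\I$, with $s_k\to q$ and $|x|^{\S s_k}\to|x|^{\S q}$ pointwise, we conclude
\[
\mu(x,t)\geq\A_\infty\,|x|^{\S q}\,\L_\ast(t)^{\,q}=\bigl(\V_0(1-p)\,|x|^{\S}\,\L_\ast(t)\bigr)^{\frac{1}{1-p}},
\]
which is Claim~\ref{Claim:Lower} (and is consistent with $\mu(x,0)=0$, since $\L_\ast(0)=0$).

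The two inductions and the integral identity are routine bookkeeping; the real content, and where I expect the main obstacle, is the \emph{passage between the two phases}. One cannot bootstrap directly from $e^{-\D|x|}$ to polynomial growth, because (\ref{Est:BeP}) only reproduces exponential profiles whereas (\ref{Est:BVd}) only applies to polynomial ones, so the exponential first has to be flattened to a constant in $x$; matching the iteration to the exact exponents $\S/(1-p)$ and $1/(1-p)$ and the sharp constant $(\V_0(1-p))^{1/(1-p)}$ is what forces the geometric-series accounting above. A secondary point needing care is that the coefficient sequences $c_k$ and $\A_k$ do not collapse to $0$ under repeated $p$-th powers; this is exactly where $0<p<1$ enters, via the fixed-point analysis of $c\mapsto(\mathrm{const})\,c^{\,p}$.
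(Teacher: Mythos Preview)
Your proposal is correct and follows essentially the same two-phase iteration as the paper: first apply (\ref{Est:BeP}) repeatedly to flatten the exponential $e^{-\D|x|}$ to a constant-in-$x$ lower bound $c_\star\L_\ast(t)^q$, then apply (\ref{Est:BVd}) repeatedly to build up the radial factor $|x|^{\S q}$ with the exact limiting constant $(\V_0(1-p))^q$. The only differences are cosmetic: you track the exponent $\D p^k$ directly (the paper weakens to $P^k$ with $P=\D p<1$), and you phrase the coefficient convergence as a fixed-point argument for $c\mapsto(\mathrm{const})\,c^{p}$ rather than estimating the products $\prod_j\eta(P^j)^{p^{m-j}}$ explicitly.
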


\begin{proof}[{\bf Claim \ref{Claim:Lower}}]
Fix $(x,t)\in Q_{\t_0}$. The claim will be proved by repeatedly applying the fact that
\begin{align}\label{NonLins}
\mu\geq\CN^s\mu\triangleq\int_0^t\CB\left(V\mu^p\right)(\t)\,d\t,
\end{align}
which is true because $\mu\geq\CM^s\mu$ and $e^{(1-p)\t}\geq1$. Since $u$ is an s-supersolution, we have $\mu\geq u_0\geq C_0e^{-\D|x|}$. In $Q_T$, using Lemma \ref{Lem:BesselLower} we have
\begin{align*}
&\mu\geq\CN^s\left(C_0e^{-\D|\cdot|}\right)=C_0^p\int_0^t\CB\left[Ve^{-P|\cdot|}\right](\t)\,d\t,\nonumber\\
&\hphantom{\mu}\geq C_0^p\eta(P)e^{-P|x|}\int_0^t\L(\t)d\t,\nonumber\\
&\hphantom{\mu}=C_1\L_1(t)e^{-P|x|}\quad\mbox{where}\,\,C_1\triangleq C_0^p\eta(P),\quad\L_1(t)\triangleq\L_\ast(t).
\end{align*}
Using this estimate, the fact that $0<p<P<1$, and Lemma \ref{Lem:BesselLower}, we perform an iteration to get
\begin{align*}
&\mu\geq \CN^s\left(C_1\L_1(t)e^{-P|\cdot|}\right)\geq C_1^p\int_0^t\L_1(\t)^p\CB\left[Ve^{-P^2|\cdot|}\right](\t)\,d\t,\nonumber\\
&\hphantom{\mu}\geq C_1^p\eta\left(P^2\right)e^{-P^2|x|}\int_0^t\L(\t)\L_1(\t)^pd\t,\nonumber\\
&\hphantom{\mu}=C_2\L_2(t)e^{-P^2|x|}\quad\mbox{where}\,\,C_2\triangleq C_0^{p^2}\left[\eta(P)^p\eta\left(P^2\right)\right],\quad\L_2(t)=\int_0^t\L(\t)\L_1(\t)^pd\t.
\end{align*}
Continuing the iteration, we get
\begin{align*}
&\mu\geq\CN^s\left(C_2\L_2(t)e^{-P^2|\cdot|}\right)\geq C_2^p\int_0^t\L_2(\t)^p\CB\left[Ve^{-P^3|\cdot|}\right](\t)\,d\t,\nonumber\\
&\hphantom{\mu}\geq C_2^p\eta(P^3)e^{-P^3|x|}\int_0^t\L(\t)\L_2(\t)^pd\t,\nonumber\\
&\hphantom{\mu}=C_3\L_3(t)e^{-P^3|x|}\quad\mbox{where}\,\,C_3\triangleq C_0^{p^3}\left[\eta(P)^{p^2}\eta(P^2)^p\eta(P^3)\right],\quad\L_3(t)=\int_0^t\L(\t)\L_2(\t)^pd\t.
\end{align*}

{\it Step 2.} By induction, we obtain that
\begin{align}\label{Tmp:muCmLme}
\mu\geq C_{m}\L_m(t)e^{-P^m|x|}\quad(m\geq1),
\end{align}
where
\begin{align}
C_{m}&=C_0^{p^{m}}\prod_{j=1}^{m}\eta\left(P^j\right)^{p^{m-j}},
\quad\L_m(t)=\int_0^t\L(\t)\L_{m-1}(\t)^pd\t.
\end{align}
We have to calculate $\L_m=\L_m(t)$. By differentiation, we observe that each $\L_m$ solves 
\begin{align*}
\L_m'=\L\L_{m-1}^p,\quad\L_m(0)=0.
\end{align*}
Observe that $\L=\L_1'$. 
For $m=2$, we get 
\[
\L_2=\L_1'\L_1^p=\left(\frac{\L_1^{1+p}}{1+p}\right)'\quad\Rightarrow\quad\L_2=\frac{\L_1^{1+p}}{1+p}.
\]
For $m=3$, we get that
\[
\L_3'=\L_1'\L_2^p=\L_1'\frac{\L_1^{p+p^2}}{(1+p)^p}=\left(\frac{\L_1^{1+p+p^2}}{(1+p)^p(1+p+p^2)}\right)'\quad\Rightarrow\quad \L_3=\frac{\L_1^{1+p+p^2}}{(1+p)^p(1+p+p^2)}.
\]
By the same reasoning, it follows that
\begin{align}
\L_m=\frac{\L_\ast^{K_m}}{L_m},\quad K_m=1+p+\cdots+p^{m-1},\,\, L_m=\prod_{k=1}^{m-1}(1+p+\cdots+p^k)^{p^{m-1-k}}
\end{align}
Since $0<p<1$ and $1+p+\cdots+p^k\leq q$, we have $L_m\leq q^{K_{m-1}}$. Thus
\[
\L_m(t)\geq\frac{\L_\ast^q}{q^{K_{m-1}}}=(1-p)^{K_{m-1}}\L_\ast^q.
\]

Next we estimate $C_m$. Since $0<p,P<1$ and $\eta$ is non-increasing, we have
\begin{align*}
\begin{cases}
p^m\to0\,\,\mbox{as $m\to\I$},\\
\displaystyle 1+p+\cdots+p^k\leq q,\,\,\mbox{and}\\
\displaystyle\eta(P^j)\geq\eta(P)\,\,\mbox{for all $j\geq1$},
\end{cases}
\end{align*}
which implies
\begin{align*}
C_{m}&\geq C_0^{p^{m}}\prod_{j=1}^m\eta(P)^{p^{m-j}}=C_0^{p^{m}}\eta(P)^{K_m}.
\end{align*}
Now we obtain
\begin{align*}
\mu(x,t)\geq C_0^{p^{m}}\eta(P)^{K_m}(1-p)^{K_{m-1}}\L_\ast(t)^qe^{-P^{m}|x|}.
\end{align*}
By taking $m\to\I$ we obtain
\begin{align}\label{Tmp:muAlm}
\mu(x,t)\geq \left(\eta(P)(1-p)\L_\ast(t)\right)^q\quad(\mbox{on $Q_{\t_0}$}).
\end{align}

{\it Step 3.} We perform a further iteration. This time we use (\ref{Est:BVd}) that $\CB(V|x|^d)\geq\V_0\L(t)|x|^{\S+d}$ for all $d\geq0$. Also note that $pq=q-1$ and $\L=\L_\ast'$. Let us write (\ref{Tmp:muAlm}) as $\mu\geq\CI_0\triangleq M\L_\ast(t)^q$.  We have
\begin{align*}
\mu&\geq\CN^s\CI_0\geq M^p\int_0^t\CB\left(V\right)\L_\ast(\t)^{pq}d\t,\nonumber\\
&\geq\V_0 M^p|x|^\S\int_0^t\L(\t)\L_\ast(\t)^{q-1}d\t,\nonumber\\
&=\V_0(1-p)M^p|x|^\S\L_\ast(t)^q\triangleq \CI_1,
\end{align*}
and
\begin{align*}
\mu&\geq\CN^s\CI_1\geq\left(\V_0(1-p)\right)^pM^{p^2}\int_0^t\CB\left(V|\cdot|^{\S p}\right)\L_\ast(\t)^{pq}d\t,\nonumber\\
&\geq\left(\V_0(1-p)\right)^pM^{p^2}(\V_0|x|^{\S+\S p})\int_0^t\L(\t)\L_\ast(\t)^{q-1}d\t,\nonumber\\
&=\left(\V_0(1-p)|x|^\S\right)^{1+p}M^{p^2}\L_\ast(t)^q\triangleq \CI_2,\\
\mu&\geq\CN^s\CI_2\geq\left(\V_0(1-p)\right)^{p+p^2}M^{p^3}\int_0^t\CB\left(V|\cdot|^{\S p+\S p^2}\right)\L_\ast(\t)^{pq}d\t,\nonumber\\
&\hphantom{\mu}\geq\left(\V_0\right)^{p+p^2}M^{p^3}(\V_0|x|^{\S+\S p+\S p^2})\int_0^t\L_\ast(\t)^{q-1}d\t,\nonumber\\
&\hphantom{\mu}=\left(\V_0(1-p)|x|^\S\right)^{1+p+p^2}M^{p^3}\L_\ast(t)^q\triangleq\CI_3.
\end{align*}
By induction, we have
\begin{align*}
\mu(x,t)\geq\CI_m\triangleq\left(\V_0(1-p)|x|^\S\right)^{\sum_{j=0}^{m-1}p^j}M^{p^{m}}\L_\ast(t)^q\quad(m\geq1).
\end{align*}
Since $0<p<1$, by taking $m\to\I$, we obtain that 
\begin{align}
\mu(x,t)\geq\left(\V_0(1-p)|x|^\S\right)^q\L_\ast(t)^q=\left(\V_0(1-p)|x|^\S\L_\ast(t)\right)^q,
\end{align}
which proves Claim \ref{Claim:Lower}.\QED
\end{proof}

{\it Step 4.} We finish the proof of this theorem in the case that $u_0\gtrsim e^{-\D|x|}$. Setting $u=e^{-t}\mu$ in Claim \ref{Claim:Lower}, we have 
\begin{align}
u\geq \CI_0=C_0e^{-t}|x|^{\S q}\L_\ast(t)^q,\quad C_0\triangleq \left(\V_0(1-p)\right)^q.
\end{align}
Since $u$ is a s-supersolution, it follows, by Lemma \ref{Lem:TMisM}, that $u\geq\CM u$. In particular, $u$ satisfies
\begin{align}
u\geq\CN u\triangleq\int_0^t\CG(t-\t)\CB\left(Vu^p\right)(\t)\,d\t.
\end{align}
Employing Lemma \ref{Lem:BasicBG} (3) that $\CG(t)(|x|^d)\geq|x|^d$ for all $d\in J_n^+$ and (\ref{Est:BVd}) in the preceding inequality, we get
\begin{align*}
&u\geq\CN \CI_0\geq C_0^p\int_0^te^{-p\t}\CG(t-\t)\CB(V|\cdot|^{\S pq})\L_\ast(\t)^{pq}d\t,\nonumber\\
&\hphantom{u}\geq C_0^p\V_0\Vt e^{-pt}|x|^{\S+\S pq}\int_0^t\L(\t)\L_\ast(\t)^{q-1}d\t,\nonumber\\
&\hphantom{u}=C_1e^{-pt}|x|^{\S q}\L_\ast(t)^q\triangleq \CI_1,\,\,\mbox{where}\\
&C_1=C_0^p\V_0\Vt(1-p)=C_0.
\end{align*}
Similarly, we have
\begin{align*}
&u\geq\CN \CI_1\geq C_1^p\int_0^te^{-p^2\t}\CG(t-\t)\CB\left(V|\cdot|^{\S pq}\right)\L_\ast(\t)^{pq}d\t,\nonumber\\
&\hphantom{u}\geq C_1^p\V_0\Vt e^{-p^2t}|x|^{\S+\S pq}\int_0^t\L(\t)\L_\ast(\t)^{q-1}d\t,\nonumber\\
&\hphantom{u}=C_2e^{-p^2t}|x|^{\S q}\L_\ast(t)^q\triangleq \CI_2,\,\,\mbox{where}\\
&C_2=C_1^p\V_0\Vt(1-p)=C_0.
\end{align*}
It follows by induction that
\begin{align*}
u(x,t)\geq \CI_m\triangleq C_0e^{-p^mt}|x|^{\S q}\L_\ast(t)^q,\quad(m\geq0).
\end{align*}
Sending $m\to\I$ we obtain
\begin{align}
u(x,t)\geq C_0|x|^{\S q}\L_\ast(t)^q=\left(\V_0\Vt(1-p)|x|^\S\L_\ast(t)\right)^q.
\end{align}
This is the desired result of the theorem in the case $u_0\geq C_0e^{-\D|x|}$.\smallskip

{\it Step 5.} Finally, we prove the theorem in the general case that $u_0\geq0$ and $u_0\neq0$. Let $0<\t<\t_0$ and $\D>1$. By Lemma \ref{Lem:lowert0}, there is a constant $C_0=C_0(\D,\t,u_0)>0$ such that 
\begin{align}
\mu(\t)\geq\CM^s\mu(\t)\geq C_0e^{-\D|x|}\quad(\mbox{on $\R^n$}).
\end{align}
Using the semigroup property for s-supersolutions (Remark \ref{Rem:MsSem}), we have that $\mu^\t$ satisfies
\begin{align*}
&\mu^\t(t)\geq\CM^s_{(U_0,\T V)}\mu^\t(\t),\quad\mbox{where}\\
&U_0=\CM^s\mu(\t),\quad\T V(x,t)=e^{(1-p)\t}V(x,t+\t).
\end{align*}
By (\ref{HypVLp}), it follows that 
\begin{align*}
\T V(x,t)\geq\L(t+\t)|x|^\S\quad(|x|\geq1,0<t<\t_0-\t),
\end{align*}
Thus $U\triangleq e^{-t}\mu^\t$ is an s-supersolution of (\ref{Eqn:Main}) satisfying (\ref{HypVL}) with $\L(t)$ replaced by $\L(t+\t)$. Since $U_0=\CM^s\mu(\t)\geq C_0e^{-\D|x|}$, it follows by the previous special case that
\begin{align*}
u(x,t+\t)&=e^{-\t}U(x,t),\nonumber\\
&\geq e^{-\t}\left(\V_0(1-p)|x|^\S\int_0^t\L(s+\t)ds\right)^q\quad(\mbox{on $Q_{\t_0-\t}$}).
\end{align*}
This is true for all $0<\t<\t_0$. For each $0<t<\t_0$ and any $0<\t<t$, we have found that
\begin{align*}
u(x,t)&=u(x,(t-\t)+\t),\nonumber\\
&\geq e^{-\t}\left(\V_0(1-p)|x|^\S\int_0^{t-\t}\L(s+\t)ds\right)^q
\end{align*}
for all $x\in\R^n$. Letting $\t\to0$, $u$ satisfies the desired estimate.\QED
\end{proof}

\begin{remark}

A particularly important example of $\L$ is 
\[
\L(t)\sim t^k,\,\,(\log t)^\nu,\,\, t^k(\log t)^\nu\quad(k,\nu\in\R)\,\,\mbox{as $t\to\I$}.
\]
More generally, we shall consider $\L$ satisfying an \textit{$\V$-asymptotic bound condition}: $\L\in\CE_\V$ ($0<\V<1$) where
\begin{align}\tag{$\V$-AB}\label{Hyp:epsilonB}
\CE_\V=\left\{\L\in C(\R;\R_{>0}):\exists\,c,t_0>0\,\,\mbox{such that}\,\,\L_\ast(\V t)\geq c\L_\ast(t),\,\,\forall\,t\geq\V^{-1}t_0\right\}.
\end{align}

\begin{example}[\textbf{Power functions}]
Consider $\L>0$ satisfying 
\[
\L(t)\sim t^k\,\,(k\in\R)\quad\mbox{as $t\to\I$},
\]
i.e.\ there exist $t_0,\A_1,\A_2>0$ such that
\[
\A_1t^k\leq\L(t)\leq\A_2t^k\quad\mbox{for all $t\geq t_0$}.
\]
We claim that $\L\in\CE_\V$ for all $\V\in(0,1)$.\smallskip

We have
\begin{align}\label{Est:Lastnu}
\L_\ast(t)&=\int_0^{t_0}\L(s)ds+\int_{t_0}^t\L(s)ds
\underset{t\to\I}{\sim}
\begin{cases}
\displaystyle
1-t^{k+1}&\mbox{if $k<-1$},\\
\vspace{-10pt}\\
\displaystyle 1+\ln(t)&\mbox{if $k=-1$},\\
\vspace{-10pt}\\
\displaystyle 1+t^{k+1}&\mbox{if $k>-1$},
\end{cases}\nonumber\\
&
\underset{t\to\I}{\sim}\L_{\ast,k}(t)\triangleq
\begin{cases}
t^{(k+1)_+}&\mbox{if $k\neq-1$},\\
\ln t&\mbox{if $k=-1$}.
\end{cases}
\end{align}
The last asymptotic estimate means there exists $t_1>0$ such that $\L_\ast(t)\sim\L_{\ast,k}(t)$ for all $t\geq t_1$. 
For any $\V\in(0,1)$, if $\V t\geq t_1$ then 
\[
\L_{\ast}(\V t)\gtrsim\L_{\ast,k}(\V t)\gtrsim\L_{\ast,k}(t)\gtrsim\L_\ast(t).
\]
Therefore $\L\in\CE_\V$ as claim.
\end{example}

\begin{example}[\textbf{Logarithmic functions}]

Consider $\L>0$ satisfying 
\[
\L(t)\sim(\log t)^\nu\,\,(\nu\in\R)\quad\mbox{as $t\to\I$},
\]
i.e\ $\exists$ $\A_1,\A_2>0$ and $t_0>1$ such that 
\[
\A_1(\log t)^\nu\leq\L(t)\leq\A_2(\log t)^\nu\quad\mbox{for all $t\geq t_0$}.
\]
We show that $\L\in\CE_\V$ for any $\V\in(0,1)$.\smallskip

First assume $\nu\geq0$. We have
\begin{align*}
&\L_\ast(t)\lesssim A+\int_{t_0}^t(\log\t)^\nu\,d\t\underset{t\to\I}{\lesssim}t(\log t)^\nu,\\
&\L_\ast(t)\gtrsim A+t\int_{t_0/t}^1(\log(t\t))^\nu\,d\t\underset{t\to\I}{\gtrsim}t\int_{1/2}^1(\log(t/2))^\nu\,d\t\underset{t\to\I}{\gtrsim}t(\log t)^\nu.
\end{align*}
Then 
\[
\L_\ast(\V t)\gtrsim(\V t)(\log(\V t))^\nu\gtrsim t(\log t)^\nu\gtrsim\L_{\ast}(t),
\]
which means $\L\in\CE_\V$.\smallskip

Now assume $\nu<0$. Integrating by parts, we have
\begin{align*}
\int_{t_0}^t(\log\t)^\nu\,d\t&=A+t(\log t)^\nu+|\nu|\int_{t_0}^t(\log \t)^{\nu-1}d\t\leq A+t(\log t)^\nu+\frac{|\nu|}{\log t_0}\int_{t_0}^t(\log\t)^\nu d\t,
\end{align*}
Hence by taking $t_0$ sufficiently large, we have
\[
\int_{t_0}^t(\log\t)^\nu\,d\t\underset{t\to\I}{\lesssim}t(\log t)^\nu.
\]
On the other hand, it is clear that
\begin{align*}
\L_\ast(t)\geq A+t(\log\t)^\nu\underset{t\to\I}{\gtrsim}t(\log t)^\nu.
\end{align*}
Thus $\L_\ast(t)\underset{t\to\I}{\sim}t(\log t)^\nu$. It follows as above that $\L\in\CE_\V$. We note that the case $\L(t)=(\log t)^{-1}$ leads to the famous \textit{logarithmic integral function} $\mbox{li}\,(t)=\L_\ast(t)$ whose asymptotic property is well-known.

\end{example}

\begin{example}[\textbf{Mixed}]

Generally, consider a continuous function $\L>0$ satisfying
\begin{align*}
\L(t)\sim t^k(\log t)^\nu\quad(k,\nu\in\R)\quad\mbox{as $t\to\I$}.
\end{align*}
If $k=-1$ and $\nu\in\R$, then by direct integration, it follows that
\begin{align*}
\displaystyle\L_\ast(t)\sim A+\int_{t_0}^t\frac{(\log\t)^\nu}{\t}d\t\underset{t\to\I}{\sim}\begin{cases}
\log(\log t)&\mbox{if $\nu=-1$},\\
\vspace{-10pt}\\
\displaystyle(\log t)^{(\nu+1)_+}&\mbox{if $\nu\neq-1$}.
\end{cases}
\end{align*}

Now assume $k\neq-1$. If $\nu\geq0$ then we have
\begin{align*}
&\L_\ast(t)=A+\int_{t_0}^t\t^k(\log\t)^\nu d\t\leq A+(\log t)^\nu\frac{t^{k+1}}{k+1}\underset{t\to\I}{\lesssim}
\begin{cases}
\displaystyle 1&\mbox{if $k<-1$},\\
\vspace{-10pt}\\
\displaystyle t^{k+1}(\log t)^\nu&\mbox{if $k>-1$},
\end{cases}\\
&\L_\ast(t)\underset{t\to\I}{\gtrsim}A+\int_{t/2}^t\t^k(\log\t)^\nu d\t=A+t^{k+1}\int_{1/2}^1\t^k(\log(t/2))^\nu d\t\underset{t\to\I}{\gtrsim}A+t^{k+1}(\log t)^\nu.
\end{align*}
So we have
\begin{align*}
\L_\ast(t)\underset{t\to\I}{\sim}\left(t(\log t)^{\frac{\nu}{k+1}}\right)^{(k+1)_+}.
\end{align*}

Finally, consider the case $\nu<0$. Clearly, we have
\begin{align*}
\L_\ast(t)=A+\int_{t_0}^t\t^k(\log\t)^\nu d\t\geq A+(\log t)^\nu\int_{t_0}^t\t^kd\t\underset{t\to\I}{\gtrsim}\left(t(\log t)^{\frac{\nu}{k+1}}\right)^{(k+1)_+}
\end{align*}
Employing the integration by parts we get
\begin{align*}
\L_\ast(t)&=A+(k+1)^{-1}t^{k+1}(\log t)^\nu-\frac{\nu}{k+1}\int_{t_0}^t\t^k(\log\t)^{\nu-1}d\t,\\
&\leq A+(k+1)^{-1}t^{k+1}(\log t)^\nu+\frac{|\nu|}{(|k|+1)\log t_0}\int_{t_0}^t\t^k(\log\t)^{\nu}d\t
\end{align*}
Hence by choosing $t_0$ sufficiently large, we obtain
\[
\L_\ast(t)\underset{t\to\I}{\lesssim}\left(t(\log t)^{\frac{\nu}{k+1}}\right)^{(k+1)_+}.
\]

In summary we have shown that
\begin{align}
\L_\ast(t)\underset{t\to\I}{\sim}\L_{\ast,k,\nu}\triangleq
\begin{cases}
\displaystyle\log(\log t)&\mbox{if $k=\nu=-1$},\\
\vspace{-10pt}\\
\displaystyle(\log t)^{(\nu+1)_+}&\mbox{if $k=-1,\nu\neq-1$},\\
\vspace{-10pt}\\
\left(t(\log t)^{\frac{\nu}{k+1}}\right)^{(k+1)_+}&\mbox{all other cases}.
\end{cases}
\end{align}
It is now easy to see that $\L\in\CE_\V$ for all $k,\nu\in\R$.
\end{example}

\end{remark}

We can derive the following asymptotic lower grow-up rate for s-supersolutions.

\begin{theorem}[Lower asymptotic]\label{Thm:SharpLower}

Assume (\ref{HypVL}), (\ref{HypAL}), and $u_0\geq0$, $u_0\neq0$. In case $\S>0$, assume in addition that $\L\in\CE_\V$ for some $0<\V<1$. If $u\geq0$ is a global s-solution of (\ref{Eqn:Main}), then
\begin{align}\label{Sharp:1}
&u(x,t)\underset{t\to\I}{\gtrsim}\L_\ast(t)^{\frac{1}{1-p}}\left(1+t+|x|^2\right)^{\frac{\S}{2(1-p)}}\quad(\mbox{on $\R^n$}),\\
&\|u(\cdot,t)\|_{R,a}\underset{t\to\I}{\gtrsim}\L_\ast(t)^{\frac{1}{1-p}}t^{\frac{\S}{2(1-p)}}.\label{Est:SharpLower}
\end{align}

In the case $\L(t)\sim t^k(\log t)^\nu$ ($k,\nu\in\R$) as $t\to\I$, then
\begin{align}
&u(x,t)\underset{t\to\I}{\gtrsim}\L_{\ast,k,\nu}^{\frac{1}{1-p}}(1+t+|x|^2)^{\frac{\S}{2(1-p)}}\quad(\mbox{on $\R^n$}),\\
&\|u(\cdot,t)\|_{R,a}\underset{t\to\I} {\gtrsim}
\begin{cases}
\displaystyle(\log(\log t))^{\frac{1}{1-p}}t^{\frac{\S}{2(1-p)}}&\mbox{if $k=\nu=-1$},\\
\vspace{-10pt}\\
\displaystyle (\log t)^{\frac{(\nu+1)}{1-p}}t^{\frac{\S}{2(1-p)}}&\mbox{if $k=-1,\nu\neq-1$},\\
\vspace{-10pt}\\
\displaystyle
t^{\frac{\S+2(k+1)_+}{2(1-p)}}(\log t)^{\frac{\D\nu}{1-p}}&\mbox{if $k\neq-1$, $\nu\neq-1$, where $\D=\sign(k+1)$}.
\end{cases}
\end{align}

\end{theorem}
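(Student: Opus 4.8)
The plan is to derive everything from the pointwise space--time bound \eqref{Sharp:1}. Granting it, \eqref{Est:SharpLower} follows at once by setting $x=0$: indeed $\|u(\cdot,t)\|_{R,a}\geq R^{-a}u(0,t)\gtrsim R^{-a}\L_\ast(t)^{\frac{1}{1-p}}(1+t)^{\frac{\S}{2(1-p)}}\gtrsim\L_\ast(t)^{\frac{1}{1-p}}t^{\frac{\S}{2(1-p)}}$, and the explicit estimates for $\L(t)\sim t^k(\log t)^\nu$ follow by inserting into \eqref{Sharp:1}--\eqref{Est:SharpLower} the large-time asymptotics $\L_\ast(t)\sim\L_{\ast,k,\nu}(t)$ computed in the examples preceding the theorem (which also show $\L\in\CE_\V$, so the hypotheses hold). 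Thus the whole matter reduces to \eqref{Sharp:1}.

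The case $\S=0$ is immediate: Theorem \ref{Thm:uniflower} with $\S=0$ gives $u(x,t)\geq\bigl(\V_0(1-p)\L_\ast(t)\bigr)^{\frac{1}{1-p}}$ on $Q_\I$, which is \eqref{Sharp:1} since then $(1+t+|x|^2)^{\frac{\S}{2(1-p)}}=1$; no $\V$-asymptotic bound condition is needed here. So assume $\S>0$ and fix $0<\V<1$ with $\L\in\CE_\V$. Write $q=\tfrac{1}{1-p}$, so that $pq=q-1$, $\S(1+pq)=\S q$, and $\L(\t)\L_\ast(\t)^{q-1}=q^{-1}\frac{d}{d\t}\L_\ast(\t)^q$. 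Being a global s-solution, $u$ satisfies $u=\CM u$ on $Q_\I$ by Corollary \ref{Cor:PimM}, is in particular an s-supersolution, and hence obeys the bound of Theorem \ref{Thm:uniflower} on all of $Q_\I$.

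The heart of the argument is one pass through the Duhamel term, restricted to the \emph{first} part of the time interval. From $u=\CM u$ and positivity,
\[
u(x,t)\ \geq\ \int_0^{\V t}\CG(t-\t)\,\CB\bigl(Vu^p\bigr)(x,\t)\,d\t .
\]
Into this I feed $u(y,\t)^p\geq\bigl(\V_0(1-p)\bigr)^{pq}|y|^{\S pq}\L_\ast(\t)^{pq}$ from Theorem \ref{Thm:uniflower}, then use \eqref{Est:BVd} to get $\CB\bigl(Vu^p\bigr)(y,\t)\gtrsim\L(\t)\L_\ast(\t)^{q-1}|y|^{\S q}$, and finally evaluate $\CG(t-\t)$ on the power weight $|\cdot|^{\S q}$. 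The decisive feature is that $\t\leq\V t$ forces $t-\t\geq(1-\V)t$, which is large when $t$ is large; this lets me replace the crude inequality $\CG(s)(|\cdot|^d)\geq|\cdot|^d$ of Lemma \ref{Lem:BasicBG}(3) by a sharper lower bound $\CG(s)(|\cdot|^d)(x)\gtrsim(s+|x|^2)^{d/2}$ for $d\in J_n^+$ --- the statement that for large $s$ the pseudoparabolic Green operator additionally injects a factor $s^{d/2}$, the space--time counterpart of the $(1+ct)^{\frac{a}{2}}$ upper bound noted after Proposition \ref{Prop:BoundBG}. Uniformly in $\t\in[0,\V t]$ this gives $\CG(t-\t)(|\cdot|^{\S q})(x)\gtrsim_\V(1+t+|x|^2)^{\S q/2}$ for $t$ large; pulling it out of the integral and using $\int_0^{\V t}\L(\t)\L_\ast(\t)^{q-1}\,d\t=q^{-1}\L_\ast(\V t)^{q}$ yields $u(x,t)\gtrsim\L_\ast(\V t)^{q}(1+t+|x|^2)^{\S q/2}$. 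Finally $\L\in\CE_\V$ supplies $\L_\ast(\V t)\geq c\,\L_\ast(t)$ for all large $t$, which is \eqref{Sharp:1}. (Equivalently, one may argue through the semigroup identity $u^{\V t}=\CM_{(u(\V t),V^{\V t})}u^{\V t}$ of Lemma \ref{Lem:supersub}(2): discard its integral term and bound $\CG((1-\V)t)u(\cdot,\V t)$ from below by applying the sharpened $\CG$-estimate to the Theorem \ref{Thm:uniflower} lower bound for $u(\cdot,\V t)$.)

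The one genuinely non-routine step I anticipate is establishing the sharpened lower bound $\CG(s)(|\cdot|^d)(x)\gtrsim(s+|x|^2)^{d/2}$ (equivalently $\CG(s)\bigl((R^2+|\cdot|^2)^{d/2}\bigr)\gtrsim(R^2+cs+|x|^2)^{d/2}$) for $d\in J_n^+$, i.e.\ showing that for large times the pseudoparabolic Green operator not only preserves polynomial growth but also produces the diffusive factor $s^{d/2}$ at and near the origin --- this is exactly what turns the purely spatial grow-up of Theorem \ref{Thm:uniflower} into the true space--time rate. It should be provable in the same spirit as the $(1+ct)^{\frac{a}{2}}$ bound above (via the power-weight estimates behind Proposition \ref{Prop:TwoSB}, or a Gaussian-type lower bound for $G$ mirroring Lemma \ref{Lem:B1}). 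Everything else is bookkeeping; there, the only point needing care is that the lower bound on $\CG(t-\t)$ be uniform for $\t\in[0,\V t]$ --- which is precisely why the integral is split at $\V t$ rather than at $t$, and why the $\V$-asymptotic bound condition on $\L$ is invoked.
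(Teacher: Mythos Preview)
Your proposal is correct, and in fact your parenthetical ``alternative'' route at the end is precisely what the paper does: it uses the semigroup identity from Lemma~\ref{Lem:supersub}(2) to write $u(x,t)\geq\CG((1-\V)t)u(\cdot,\V t)$, feeds in the Theorem~\ref{Thm:uniflower} bound $u(\cdot,\V t)\geq C_0\L_\ast(\V t)^q|\cdot|^{a_0}$, regularizes $|x|^{a_0}\geq\tfrac12\bigl((1+|x|^2)^{a_0/2}-(1+r_0^2)^{a_0/2}\bigr)$, and then applies Proposition~\ref{Prop:TwoSB}(2) directly (so the ``sharpened lower bound'' you flag as the one non-routine step is exactly that proposition, and your instinct about where it lives was right). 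Your primary route through the truncated Duhamel integral $\int_0^{\V t}$ also works and yields the same estimate after the same regularization trick, but it carries the time integral through an extra step for no gain --- the semigroup shortcut discards the nonlinear term before rather than after invoking Theorem~\ref{Thm:uniflower}, which is slightly cleaner. The derivations of \eqref{Est:SharpLower} and of the explicit $\L\sim t^k(\log t)^\nu$ cases match the paper's exactly.
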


\begin{proof}

The second assertion follows from the first one and the remark above. So we only have to prove the first assertion. 
Since $u$ is an s-solution, it is a solution (Corollary \ref{Cor:PimM}). Then by the semigroup property (Lemma \ref{Lem:supersub}) and that $u^\ast(\t)=\CM u(\t)=u(\t)$, it follows that $u$ satisfies
\begin{align*}
u(x,\kappa+\t)\geq\CG(\t)u(x,\kappa)\quad(\forall\,\kappa,\t\geq0).
\end{align*}
Let $q=\frac{1}{1-p}$, $a_0=\frac{\S}{1-p}\geq0$. If $\S=0$ then $a_0=0$ and by Theorem \ref{Thm:uniflower} we obtain
\begin{align*}
&u(x,t)\geq C_0\L_\ast(t)^q,\quad C_0\triangleq\left(\V_0(1-p)\right)^q,\\
&\|u(\cdot,t)\|_{R,a}=\sup_x(R^2+|x|^2)^{-\frac{a}{2}}u(x,t)\geq C_0R^{-a}\L_\ast(t)^q,
\end{align*}
which are the desired estimates when $\S=0$. We note that, in this case, the result is true for any continuous function $\L>0$.\smallskip

Now we assume $\S>0$. By Theorem \ref{Thm:uniflower}, we have
\begin{align*}
u(x,t)\geq C_0\L_\ast(t)^q|x|^{a_0}.
\end{align*}
Since $|x|^{a_0}\geq(1/2)(1+|x|^2)^{a_0/2}$ for $|x|\geq r_0\triangleq(4^{1/a_0}-1)^{-1}$, we have
\begin{align*}
&u(x,t)\geq\frac{C_0}{2}\L_\ast(t)^q(1+|x|^2)^{a_0/2}\quad(|x|\geq r_0,t>0);\,\,\mbox{so}\\
&u(x,t)\geq\frac{C_0}{2}\L_\ast(t)^q\left((1+|x|^2)^{\frac{a_0}{2}}-(1+r_0^2)^{\frac{a_0}{2}}\right)\quad(x\in\R^n,t>0).
\end{align*}
Then by Proposition \ref{Prop:TwoSB} (2), there is $k\in(0,1)$ such that
\begin{align*}
\CG(\t)u(x,\kappa)&\geq \frac{C_0}{2}\L_\ast(\kappa)^q\left((1+k\t+|x|^2)^{\frac{a_0}{2}}-(1+r_0^2)^{\frac{a_0}{2}}\right),\\
&\geq\frac{C_0}{4}\L_\ast(\kappa)^q(1+k\t+|x|^2)^{\frac{a_0}{2}},
\end{align*}
provided $(1+k\t+|x|^2)^{a_0/2}\geq2(1+r_0^2)^{a_0/2}$ which is true by taking $\t\geq t_0\triangleq k^{-1}(4^{1/a_0}(1+r_0^2)-1)^{1/2}$. \smallskip

By assumption, there exist $\V\in(0,1)$ and $t_1,c>0$ such that $\L_\ast(\V t)\geq c\L_\ast(t)$ for all $t\geq \V^{-1}t_1$. Thus for $t\geq\max\{(1-\V)^{-1}t_0,\V^{-1}t_1\}$ we have 
\begin{align*}
u(x,t)&\geq\CG(t-\V t)u(x,\V t)\geq\frac{C_0}{4}\L_\ast(\V t)^q(1+k(t-\V t)+|x|^2)^{\frac{a_0}{2}},\\
&\geq \frac{C_0}{4}c^q\L_\ast(t)^q\left(1+k(1-\V)^{-1}t+|x|^2\right)^{\frac{a_0}{2}},\\
&\geq\frac{C_0}{4}c^q(\min\{1,k(1-\V)^{-1}\})^{\frac{a_0}{2}}\L_\ast(t)^q(1+t+|x|^2)^{\frac{a_0}{2}}.
\end{align*}
This gives the desired estimate (\ref{Sharp:1}).\smallskip

For (\ref{Est:SharpLower}), we have
\begin{align}
\|u(\cdot,t)\|_{R,a}&=\sup_{x}(R^2+|x|^2)^{-\frac{a}{2}}u(x,t),\nonumber\\
&\geq R^{-\frac{a_0}{2}}u(0,t)\gtrsim \L_\ast(t)^qt^{\frac{a_0}{2}}
\end{align}
as $t\to\I$, which is the desired estimate.\QED

\end{proof}

\begin{remark}\label{Rem:Grow}

We have discovered that the unbounded, non-autonomous, sublinear source $V(x,t)u^p\gtrsim \L(t)|x|^\S$ where $\L>0$ satisfies a certain condition and $\S\in J_n^+=\{0\}\cup[(2-n)_+,\I)$ has induced, on any nontrivial solution $u$ of Eq.\ (\ref{Eqn:Main}),
\begin{enumerate}
\item[(i)] a grow-up in time at least as
\[
\L_\ast(t)^{\frac{1}{1-p}}t^{\frac{\S}{2(1-p)}},
\]
where $\L_\ast(t)=\int_{0}^t\L(\t)d\t$ is an anti-derivative of $\L$, and 
\item[(ii)] a spatial growth at least as
\[
|x|^{\frac{\S}{1-p}}.
\]
\end{enumerate}
As has already been observed in \cite{Khomrutai15} when $\L\equiv1$ and $\S=0$ that the grow-up in time, $\L_\ast(t)^{1/(1-p)}=t^{1/(1-p)}$, is affected by the sublinearity. So the spatial growth of the potential $V$, combined with the sublinearity, induce a spatial growth of the solution at least as $|x|^{\S/(1-p)}$ and an additional grow-up (in time) factor $t^{\S/(2(1-p))}$. We note that the solution can exhibit the growth as $|x|\to\I$, at any later time, even when its initial state is a bounded or decaying function, displaying a {\it regularization effect}.

\end{remark}

\section{Comparison theorem; uniqueness of solutions}\label{Sec:CompU}

We prove the key comparison theorem for Eq.\ (\ref{Eqn:Main}) assuming that $V$ satisfies (\ref{Hyp:Vcomp}), that is
\begin{align}\label{HypV}
\begin{cases}
\displaystyle V(x,t)\geq\Uv\L(t)|x|^\S&\mbox{for $|x|\geq1,0<t<\t_0$},\\
\vspace{-10pt}\\
\displaystyle V(x,t)\leq\Ov\L(t)|x|^\S&\mbox{for $x\in\R^n,0<t<\t_0$},
\end{cases}
\end{align}
where $\L\in C(\R;\R_{>0})$, $0<\t_0\leq\I$, and $\S\in J_n^+$, for some constants $\Uv,\Ov>0$. Here it is no loss of generality in assuming ``$|x|\geq1$" instead of ``$|x|\geq R_0$" for some $R_0>0$.

\begin{theorem}[Comparison theorem]\label{Thm:Comp}

Assume (\ref{HypAL}) and (\ref{Hyp:Vcomp}) (see (\ref{HypV})). Let $0\leq u,v\in\CZ_{\t_0,a}$ be s-supersolution and s-subsolution, respectively, of (\ref{Eqn:Main}), i.e. $\mu=e^{t}u$ and $\omg=e^{t}v$ satisfy
\begin{align}
\begin{cases}
\displaystyle
\mu(t)\geq u_0+\int_0^t\CB\mu(\t)d\t+\int_0^te^{(1-p)\t}\CB\left(V\mu^p\right)(\t)d\t,\\
\vspace{-10pt}\\
\displaystyle\omg(t)\leq v_0+\int_0^t\CB\omg(\t)d\t+\int_0^te^{(1-p)\t}\CB\left(V\omg^p\right)(\t)d\t,
\end{cases}
\end{align}
on $Q_{\t_0}$ where $0\leq u_0,v_0\in BC_a$. If $p$ satisfies (\ref{HypCompOnp}) 
where $\V_0>0$ is the constant given in (\ref{Est:BVd}), and
\begin{align}
u_0\geq v_0\geq0\quad\mbox{with}\quad u_0\neq0,
\end{align}
then $u\geq v$ on $Q_{\t_0}$.

\end{theorem}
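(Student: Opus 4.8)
The plan is to estimate the positive part $(\omega - \mu)_+$ of the difference of the rescaled functions. Since $u_0 \geq v_0$, the initial terms give $u_0 - v_0 \leq 0$; subtracting the integral inequality for $\omega$ from the one for $\mu$, and using the positivity of $\CB$ together with the elementary bound $(\omega^p - \mu^p)_+ \leq ((\omega - \mu)_+)^p \wedge \dots$, one obtains a closed inequality of the form
\begin{align*}
(\omega - \mu)(t) \leq \int_0^t \CB (\omega - \mu)_+(\t)\, d\t + \int_0^t e^{(1-p)\t}\CB\!\left(V\,|\omega^p - \mu^p|\,\mathbf{1}_{\{\omega > \mu\}}\right)(\t)\, d\t.
\end{align*}
The difficulty is that $s \mapsto s^p$ is only H\"older, not Lipschitz, at $s=0$, so the second term is \emph{not} controlled by a linear multiple of $(\omega - \mu)_+$ in any naive way. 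This is exactly where the lower grow-up estimate (Theorem \ref{Thm:uniflower}) enters: since $u_0 \neq 0$, we have $\mu(x,t) \gtrsim \left(\V_0(1-p)|x|^\S \L_\ast(t)\right)^{1/(1-p)}$ on $Q_{\t_0}$, which keeps $\mu$ bounded away from $0$ for $t > 0$ and lets us linearize.

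First I would write $\omega^p - \mu^p = p\,\xi^{p-1}(\omega - \mu)$ for some intermediate value $\xi$ between $\mu$ and $\omega$ on the set $\{\omega > \mu\}$; there $\xi \geq \mu$, so $\xi^{p-1} \leq \mu^{p-1} \lesssim \left(\V_0(1-p)|x|^\S \L_\ast(t)\right)^{-1}$. Then $V \cdot p\,\xi^{p-1} \leq p\,\Ov \L(t)|x|^\S \cdot \mu^{p-1} \lesssim \frac{p\,\Ov}{\V_0\,\Uv}\cdot\frac{1}{\L_\ast(t)}$ — the $|x|^\S$ and the $\L(t)$ cancel, leaving a coefficient $\lesssim \frac{p\,\Ov}{\V_0\,\Uv\,\L_\ast(t)}$ that is \emph{independent of $x$}. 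Here the hypothesis \eqref{HypCompOnp}, $0 < p < \V_0(\Ov/\Uv)$ (properly: $0<p<\V_0\Uv/\Ov$), is what makes the constant in front of the linear Gronwall term small enough; combined with the decay of $1/\L_\ast(\t)$ this will close the estimate. One subtlety near $\t = 0$: $\L_\ast(\t) \to 0$, so $1/\L_\ast(\t)$ blows up; this is handled by restricting first to $[\t', \t_0]$ and running the Gronwall argument there, then letting $\t' \to 0^+$ — on $(0,\t']$ both $\mu,\omega$ are close to $u_0, v_0$ (which already satisfy $u_0 \geq v_0$) via the contraction-type estimates from Proposition \ref{Prop:BoundBG}, so $(\omega-\mu)_+$ is controlled there independently.

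With the linearized inequality in hand, set $\zeta(t) = \|(\omega - \mu)_+(\cdot,t)\|_{R,a}$ (or the $L^\infty$-weighted norm). Applying the operator-norm bounds of Proposition \ref{Prop:BoundBG} (2) — $\CB$ and $\CG(t)$ act almost as contractions on $BC_a$ for $R$ large — to the inequality above yields
\begin{align*}
\zeta(t) \leq C\int_0^t \zeta(\t)\, d\t + C\,\frac{p\,\Ov}{\V_0\,\Uv}\int_0^t \frac{\zeta(\t)}{\L_\ast(\t)}\, d\t,
\end{align*}
and Gronwall's inequality (in the Bihari/integral form, accommodating the integrable-at-worst singularity after the $\t'$-truncation) forces $\zeta \equiv 0$, i.e. $\omega \leq \mu$ and hence $u \leq v$ is reversed correctly to $u \geq v$. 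The main obstacle, as indicated, is the non-Lipschitz nonlinearity: the entire argument hinges on using the strict positivity from Theorem \ref{Thm:uniflower} to linearize, and on the precise matching of the $|x|^\S\L(t)$ growth of $V$ from above against the $\left(|x|^\S\L_\ast(t)\right)^{1/(1-p)}$ lower bound on $\mu$ so that the resulting linear coefficient is both $x$-free and smaller than $1$ under \eqref{HypCompOnp}. Everything else — the iteration to pass from s-sub/supersolutions to the integral inequalities, the continuity and Banach-space bookkeeping — is routine given the preliminaries.
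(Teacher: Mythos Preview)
Your linearization step is right and matches the paper: on $\{\omega>\mu\}$ you write $\omega^p-\mu^p=p\xi^{p-1}(\omega-\mu)$ with $\xi\geq\mu$, invoke Theorem~\ref{Thm:uniflower} to bound $\mu^{p-1}$ from above, and watch the $|x|^\S$ cancel against the upper bound on $V$. (Minor slip: the $\L(t)$ does \emph{not} cancel; the resulting coefficient is $\dfrac{p\,\Ov}{\V_0\Uv(1-p)}\cdot\dfrac{\L(\t)}{\L_\ast(\t)}$, not $\dfrac{1}{\L_\ast(\t)}$. This is actually essential, since $\L(\t)/\L_\ast(\t)=(\log\L_\ast)'(\t)$ is what allows explicit integration below.)

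The genuine gap is in how you close the argument. A linear Gronwall inequality of the form
\[
\zeta(t)\leq C\int_0^t\frac{\L(\t)}{\L_\ast(\t)}\,\zeta(\t)\,d\t
\]
does \emph{not} force $\zeta\equiv0$: the kernel is not integrable at $\t=0$ (it behaves like $1/\t$), so Gronwall only yields $\zeta(t)\leq\zeta(\t')\exp\bigl(C\log(\L_\ast(t)/\L_\ast(\t'))\bigr)=\zeta(\t')\bigl(\L_\ast(t)/\L_\ast(\t')\bigr)^{C}$ on $[\t',t]$. Your proposed fix (``on $(0,\t']$, $\mu,\omega$ are close to $u_0,v_0$, so $(\omega-\mu)_+$ is controlled'') gives no \emph{rate} at which $\zeta(\t')\to0$, and without a rate faster than $\L_\ast(\t')^{C}$ you cannot send $\t'\to0$. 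Nor is this where \eqref{HypCompOnp} enters: in a plain linear Gronwall the size of the constant only affects the exponential growth rate, never whether $\zeta=0$.

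What the paper does (following Aguirre--Escobedo) is a two-pass argument. First, from the cruder H\"older bound $(v^p-u^p)_+\leq(v-u)_+^p$ and Bihari's inequality (Lemma~\ref{Technical1}(2)) one obtains the preliminary rate
\[
\|w(t)\|\leq\vU_T\,\L_\ast(t)^{q},\qquad q=\tfrac{1}{1-p},
\]
on a short interval $[0,T]$. Second, the linearized inequality integrates explicitly (because the kernel is a logarithmic derivative) to give $H(t)\leq H(\V)\,\L_\ast(\V)^{-\VS_T}\L_\ast(t)^{\VS_T}$ with $\VS_T=\dfrac{p}{p_0}\theta^{-1}\B(T)\cdot q$; plugging the Bihari rate into $H(\V)$ yields
\[
H(t)\lesssim\L_\ast(\V)^{\,q-\VS_T}\,\L_\ast(t)^{\VS_T}.
\]
Now the hypothesis $p<p_0=\V_0\Uv/\Ov$ is exactly what makes $q-\VS_T>0$ once $\theta$ is close to~$1$ and $T$ is small, so $\V\to0$ gives $H\equiv0$ on $[0,T]$. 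A semigroup step then propagates this to all of $Q_{\t_0}$. You are missing the Bihari pass and the explicit competition of exponents; without them the role of \eqref{HypCompOnp} remains invisible and the argument does not close.
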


\begin{proof}
We will prove the result when $\t_0<\I$. The case $\t_0=\I$ then follows by considering the problem on any finite subintervals.\smallskip

Let $0<\th<1$ to be specified and $R=R(n,a,\th)>0$ be sufficiently large according to Proposition \ref{Prop:BoundBG} (2). Recall $\LL=(R^2+|x|^2)^{-a/2}$ so that $\|\vp\|_{R,a}=\|\LL\vp\|_{L^\I}$. We denote $\|\cdot\|=\|\cdot\|_{R,a}$.\smallskip

Let $q=1/(1-p)$. We define
\begin{align}
w=(v-u)_+.
\end{align}
By Lemma \ref{Lem:TMisM}, $u,v$ are supersolution and subsolution, respectively, of (\ref{Eqn:Main}), and since $v_0\leq u_0$, we have
\begin{align}
(v-u)(t)&\leq \CG(t)(v_0-u_0)+\int_0^t\CG(t-\t)\CB\left[V(v^p-u^p)\right](\t)\,d\t,\nonumber\\
&\leq\int_0^t\CG(t-\t)\CB\left[V(v-u)_+^p\right](\t)\,d\t\quad(\mbox{by Lemma \ref{Technical1} (1)}).
\end{align}
This implies that $w\geq0$ satisfies the integral inequality
\begin{align}
w(t)\leq\int_0^t\CG(t-\t)\CB\left[Vw^p\right](\t)\,d\t.
\end{align}
Since $V(x,\t)\leq\Ov\L(\t)|x|^\S$, one can prove as in Lemma \ref{Lem:LamV} to find that 
\[
\|\CG(t-\t)\CB(Vw^p)(\t)\|\leq\th^{-1}\Ov\L(\t)\B(t-\t)\|w(\t)\|^p.
\]

Let $0<T<\t_0$ to be specified. For $t\in(0,T]$, by taking the norm, we get
\begin{align}
\|w(t)\|&\leq\int_0^t\left\|\CG(t-\t)\CB\left(Vw^p\right)(\t)\right\|d\t,\nonumber\\
&\leq\th^{-1}\Ov\int_0^t\L(\t)\B(t-\t)\|w(\t)\|^pd\t,
\nonumber\\
&\leq\th^{-1}\Ov\B(T)\int_0^t\L(\t)\|w(\t)\|^pd\t.
\end{align}
Then we have by Lemma \ref{Technical1} (2) that
\begin{align}
\|w(t)\|\leq
\vU_T\L_\ast(t)^q,\quad\vU_T\triangleq\left(\th^{-1}\Ov\B(T)(1-p)\right)^q,\label{Tmp:Fin}
\end{align}
where $\L_\ast(t)=\int_0^t\L(\t)d\t$. 

\begin{claim}
At each $x\in\R^n,t>0$, we have
\begin{align}\label{Est:Uniquetmp1}
(v^p-u^p)_+\leq pq\left(\V_0\Uv|x|^\S\L_\ast(t)\right)^{-1}w.
\end{align}
\end{claim}

\begin{proof}[{\bf Claim}]
At each $(x,t)$ we have by the fundamental theorem of calculus that
\begin{align}
v^p-u^p&=\int_0^1\frac{d}{d\L}\left(\L v+(1-\L)u\right)^pd\L,\nonumber\\
&=p(v-u)\Theta^{p-1},
\end{align}
where $\Theta$ is a number between $v$ and $u$. The claim is trivial if $v\leq u$. On the other hand, if $v\geq u$ then using that $u_0\neq 0$ and $u$ is an s-supersolution of (\ref{Eqn:Main}), it follows by Theorem \ref{Thm:uniflower} that
\begin{align}
\Theta\geq u\geq\left(\V_0\Uv(1-p)|x|^\S\L_\ast(t)\right)^{\frac{1}{1-p}}.
\end{align}
Hence (\ref{Est:Uniquetmp1}) is also true in this case, therefore the claim is true.\QED
\end{proof}


We perform an alternative estimation for $w$. By the claim, we have
\begin{align}
&(v-u)(x,t)\leq\int_0^t\CG(t-\t)\CB\left[V(v^p-u^p)_+\right](x,\t)d\t,\nonumber\\
&\hphantom{(v-u)(x,t)}\leq pq\int_0^t\CG(t-\t)\CB\Big[V\Big(\V_0\Uv|x|^\S\L_\ast(\t)\Big)^{-1}w\Big]d\t,\nonumber\\
&\hphantom{(v-u)(x,t)}\leq\gamma pq\int_0^t\frac{\L(\t)}{\L_\ast(\t)}\CG(t-\t)\CB\left[w\right](x,\t)\,d\t,\\
&\mbox{where}\,\,\gamma\triangleq\V_0^{-1}(\Ov/\Uv)=\frac{1}{p_0},\nonumber
\end{align}
here we have used that $V(x,\t)\leq\Ov\L(\t)|x|^\S$. Now for $0<t\leq T$, we have by taking the norm that
\begin{align}
\|w(t)\|&\leq\gamma pq\theta^{-1}\int_0^t\frac{\L(\t)}{\L_\ast(\t)}\B(t-\t)\|w(\t)\|d\t,\nonumber\\
&\leq\VS_T\int_0^t\frac{\L(\t)}{\L_\ast(\t)}\|w(\t)\|d\t\quad(\VS_T\triangleq \gamma pq\theta^{-1}\B(T)).\label{Eqn:IntgEst}
\end{align}

Similar to \cite{AguirEscobedo86}, we introduce the function
\begin{align}
H(t)=\VS_T\int_0^t\frac{\L(\t)}{\L_\ast(\t)}\|w(\t)\|d\t.
\end{align}
Then (\ref{Eqn:IntgEst}) becomes
\begin{align}
H'(t)\leq\VS_T\frac{\L(t)}{\L_\ast(t)} H(t).
\end{align}
Let $0<\V<T$. Using that $\L(t)=\L_\ast'(t)$, we have for all $t\in[\V,T)$ that
\begin{align*}
\left(\log H(t)\right)'\leq\VS_T\left(\log\L_\ast(t)\right)'\quad\Rightarrow\quad H(t)\leq H(\V)\L_\ast(\V)^{-\VS_T}\L_\ast(t)^{\VS_T}.
\end{align*}
On the other hand, by (\ref{Tmp:Fin}) and the definition of $H$, we get that
\begin{align}
H(\V)&=\VS_T\int_0^\V\frac{\L(\t)}{\L_\ast(\t)}\|w(\t)\|d\t,\nonumber\\
&\leq\VS_T\vU_T\int_0^\V\L(\t)\L_\ast(\t)^{q-1}d\t,\nonumber\\
&=\VS_T\vU_T(1-p)\L_\ast(\V)^q.
\end{align}
Thus we obtain
\begin{align}\label{CompEps}
H(t)&\leq\VS_T\vU_T(1-p)\L_\ast(\V)^{q-\VS_T}\L_\ast(t)^{\VS_T}\quad(0<t<T).
\end{align}
The exponent of $\L_\ast(\V)$ in this inequality is
\[
q-\VS_T
=q\left(1-\gamma p\th^{-1}\B(T)\right)=q\left(1-\frac{p}{p_0}\th^{-1}\B(T)\right).
\]

Since $0<p<p_0$, we have $p/p_0\in(0,1)$. Now we choose $\th\in(0,1)$ such that $(p/p_0)\th^{-1}\in(0,1)$. Since $\B(T)\to 1$ as $T\to0$, we now choose $T>0$ sufficiently small such that $(p/p_0)\th^{-1}\B(T)<1$. It follows from the choice of $\th,T$ that
\begin{align}
q-\VS_T>0.
\end{align}
Observe that $T$ depends only on $p$ and $p_0$ and is independent of $\V$. Therefore passing $\V\to0$ in (\ref{CompEps}), we obtain $H(t)=0$ for all $0<t<T$. This implies in particular that
\begin{align}
v\leq u\qquad\mbox{on $Q_{T/2}$}.
\end{align}

Finally, we employ the semigroup argument, see Lemma \ref{Lem:supersub} and Remark \ref{Rem:MsSem}. By the preceding result, we have $u(T/2)\geq v(T/2)\geq0$ with $u(T/2)\not\equiv0$, hence $u^\dag(T/2)\geq v^\dag(T/2)\geq0$ with $u^\dag(T/2)\not\equiv0$ on $\R^n$. Recall $w^\dag(t)=\CM^sw(t)$. According to Remark \ref{Rem:MsSem}, setting $\T u=u^{T/2}$, $\T v=v^{T/2}$, the translations by time $T/2$, then $\T\mu\triangleq  e^t\T u,\T\omg\triangleq e^t\T v$ satisfy
\begin{align}
\begin{cases}
\displaystyle
\T\mu(t)\geq e^{-T/2}\mu^\dag(T/2)+\int_0^t\CB\T\mu(\t)d\t+\int_0^te^{(1-p)\t}\CB(V^{T/2}\T\mu^p)(\t)\,d\t,\\
\vspace{-10pt}\\
\displaystyle
\T\omg(t)\leq e^{-T/2}\omg^\dag(T/2)+\int_0^t\CB\T\omg(\t)d\t+\int_0^te^{(1-p)\t}\CB(V^{T/2}\T\omg^p)(\t)d\t.
\end{cases}
\end{align}
Since $V^{T/2}$ satisfies (\ref{HypV}) with the same ratio $p_0$ and $T$ depends only on $p$ and $p_0$, we obtain as above that $v^{T/2}\leq u^{T/2}$ on $Q_{T/2}$. Thus we have
\begin{align}
v\leq u\quad\mbox{on $Q_{T}$}.
\end{align}
Similarly, we can continue the argument to get that $v\leq u$ on $Q_{3T/2},Q_{2T},\ldots$ and so forth. Therefore $v\leq u$ on $Q_{\t_0}$ as desired.\QED
\end{proof}

\begin{remark}\label{Rem:SharpComp}
\begin{enumerate}
\item[(i)] Generally $\V_0\leq1$. We have $\V_0=1$ if the lower bound condition for the potential in (\ref{HypV}) is true on the entire $\R^n\times(0,\t_0)$. See Remark \ref{Rem:AftLow} (i).
\item[(ii)] \label{Rem:Opt}
The comparison theorem is optimal in the sense that one cannot expect the result to be true for 
\[
p\geq p_0=\V_0(\Uv/\Ov).
\]
This can be seen from the following well-studied problem of finding entire positive solutions for the sublinear elliptic equation
\begin{align}\label{Subell}
\Lap w+\r(x)w^p=0\quad\mbox{on $\R^n$},\quad\mbox{where}\,\,\r(x)\gneq0,0<p<1.
\end{align}
It was shown in \cite{BrezisKamin92} that this elliptic equation has a bounded solution $w>0$ if and only if there is a bounded solution $U$ for the equation $-\Lap U=\r(x)$. The latter means $\r$ must decay at a sufficiently fast rate, e.g.\ $\r(x)\sim|x|^{-m}$ at infinity, $m\geq2$. Moreover, it was shown in  \cite{BrezisKamin92,Dinu06} that (\ref{Subell}) has infinitely many bounded solutions, where for each constant $\ell\geq0$ there is a unique solution satisfying
\begin{align}
\lim_{|x|\to\I}w(x)=\ell.
\end{align}
See \cite{Murata95} for an important study on the necessary and sufficient conditions on the potential function $V(x)$ such that the uniqueness solution property holds for the linear Cauchy problem $\P_tu=\Lap u+V(x)u$ in $\R^n\times(0,T)$, $u(x,0)=u_0(x)$.\smallskip

Now fix $V=\r(x)$ having a sufficiently fast spatial decaying rate. Note that $\Uv/\Ov=0$. Choose two solutions $w_1,w_2$ of (\ref{Subell}) such that $2w_1\leq w_2\leq Aw_1$ for a sufficiently large constant $A>1$. Let $\psi(t)$ be the solution of 
\begin{align}
\psi'=\psi^p-\psi\quad\mbox{for $t>0$},\quad\psi(0)=A.
\end{align}
In fact, we have 
\[
\psi(t)=\left[(A^{1-p}-1)e^{-t(1-p)}+1\right]^{\frac{1}{1-p}}.
\]
Hence $\psi(t)\to1$ as $t\to\I$. Let $u_1=\psi w_1$ and $u_2=w_2$. It is easy to show that $u_1$ is a supersolution of (\ref{Eqn:Main}) with $u_1|_{t=0}=Aw_1\geq w_2=u_2|_{t=0}$ while $u_2$ is a solution of (\ref{Eqn:Main}). However, for all sufficiently large $t>0$ we have $u_1\leq2w_1\leq u_2$. Therefore the comparison result is not true when $V=\r(x)$ for all $0<p<1$.
%
\end{enumerate}

\end{remark}

\begin{corollary}\label{Cor:Comp1}
Assume as in Theorem \ref{Thm:Comp} but
\begin{align}\label{Hyp:New}
V\begin{cases}
=\displaystyle\L(t)|x|^\S&\mbox{if $|x|\geq1,0<t<\t_0$},\\
\vspace{-10pt}\\
\displaystyle\leq\L(t)|x|^\S&\mbox{if $|x|\leq1,0<t<\t_0$},
\end{cases}
\end{align}
where $\L(t)>0$ is a continuous function and $\S\in J_n^+$. If $0<p<\V_0^{-1}$ and $u_0\geq v_0$ with $u_0\neq0$, then
\[
u\geq v\quad\mbox{on $Q_{\t_0}$}.
\]

\end{corollary}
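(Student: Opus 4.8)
The plan is to deduce the corollary from Theorem~\ref{Thm:Comp} by specialisation. First I would verify that (\ref{Hyp:New}) is the instance of (\ref{HypV}) in which the lower and upper constants both equal $1$: for $|x|\geq1$ one has $V=\L(t)|x|^\S$, so the lower inequality of (\ref{HypV}) holds with $\Uv=1$, and since $\S\in J_n^+\subset[0,\I)$ forces $|x|^\S\leq1$ whenever $|x|\leq1$, the bound $V\leq\L(t)|x|^\S$ is valid on all of $\R^n$, i.e.\ the upper inequality of (\ref{HypV}) holds with $\Ov=1$. Hence $\Ov/\Uv=1$, and the admissibility hypothesis (\ref{HypCompOnp}) reduces to the restriction on $p$ stated in the corollary.

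The point deserving care is the constant $\V_0$, which in Theorem~\ref{Thm:Comp} enters only through the lower grow-up estimate of Theorem~\ref{Thm:uniflower} for the s-supersolution $u$, and hence through the Bessel estimate (\ref{Est:BVd}). Here $V$ coincides \emph{exactly} with $\L(t)|x|^\S$ on $\{|x|\geq1\}$, so Lemma~\ref{Lem:BesselLower} applies with $\L$ itself in place of $\Uv\L$; moreover, writing $V=\L(t)|x|^\S-(\L(t)|x|^\S-V)$, the first term yields $\CB(\L(t)|\cdot|^{\S+d})\geq\L(t)|x|^{\S+d}$ by Lemma~\ref{Lem:BasicBG}(3), whose constant on $J_n^+$ equals $1$, while the defect $\L(t)|x|^\S-V$ is supported in the unit ball, so its $\CB$-contribution is bounded and, for $|x|$ large, exponentially small. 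It is this sharpening of (\ref{Est:BVd}) that lets the effective threshold for $p$ be as generous as claimed.

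With the constants settled the argument is exactly that of the proof of Theorem~\ref{Thm:Comp}: put $w=(v-u)_+$; Lemma~\ref{Technical1}(1) and Proposition~\ref{Prop:BoundBG}(2) give the crude bound $\|w(t)\|_{R,a}\leq\vU_T\L_\ast(t)^{1/(1-p)}$, while inserting the lower bound for $u$ from Theorem~\ref{Thm:uniflower} into $(v^p-u^p)_+=p(v-u)_+\Theta^{p-1}$ gives
\[
\|w(t)\|_{R,a}\leq\VS_T\int_0^t\frac{\L(\t)}{\L_\ast(\t)}\,\|w(\t)\|_{R,a}\,d\t\qquad(0<t\leq T);
\]
then with $H(t)=\VS_T\int_0^t\frac{\L(\t)}{\L_\ast(\t)}\|w(\t)\|_{R,a}\,d\t$ one integrates $H'(t)\leq\VS_T\frac{\L(t)}{\L_\ast(t)}H(t)$, bounds $H(\V)$ by the crude estimate, and lets $\V\to0^+$ to obtain $H\equiv0$, hence $v\leq u$ on a short interval $[0,T)$. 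Finally the semigroup property for s-super/sub-solutions (Lemma~\ref{Lem:supersub}, Remark~\ref{Rem:MsSem}), together with $u(T/2)\not\equiv0$ and the fact that the admissible length $T$ depends only on $p$ and $\V_0$, propagates $v\leq u$ successively to $Q_T,Q_{3T/2},\dots$ and so to all of $Q_{\t_0}$.

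The main obstacle is the one already present in Theorem~\ref{Thm:Comp}: the kernel $\L(t)/\L_\ast(t)$ is not integrable at $t=0$, so letting $\V\to0^+$ yields $H\equiv0$ only when the exponent $\frac{1}{1-p}-\VS_T$ is strictly positive; the real work is to check that, using the exact-coefficient hypothesis (\ref{Hyp:New}) and the freedom to take $\th\to1$, $T\to0^+$ (so that $\th^{-1}\B(T)\to1$), the constant $\VS_T$ can be kept below $\frac{1}{1-p}$ for every $p$ in the asserted range.
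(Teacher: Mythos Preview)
Your first paragraph is exactly the paper's intent: the corollary is immediate from Theorem~\ref{Thm:Comp} upon taking $\Uv=\Ov=1$, and the paper offers no separate proof. The recapitulation of the comparison argument in your third and fourth paragraphs is faithful but superfluous---one simply invokes the theorem.

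The gap lies in your second paragraph. Specialising (\ref{HypCompOnp}) with $\Uv=\Ov=1$ gives $0<p<\V_0$ (see the proof of Theorem~\ref{Thm:Comp}, where $p_0=\V_0(\Uv/\Ov)$, confirmed in (\ref{HypNonUnq}) and Theorem~\ref{Thm:Unique}), not $0<p<\V_0^{-1}$. Your attempt to bridge this via the decomposition $V=\L(t)|x|^\S-(\L(t)|x|^\S-V)$ does not succeed: although the defect is supported in the unit ball and its Bessel image decays exponentially for large $|x|$, this yields no \emph{uniform} improvement of the constant in (\ref{Est:BVd}) over the region $|x|\lesssim1$, and it is precisely a uniform pointwise lower bound that feeds into Theorem~\ref{Thm:uniflower} and thence into the key inequality (\ref{Est:Uniquetmp1}). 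The range $p<\V_0^{-1}$ in the corollary's statement is almost certainly a misprint for $p<\V_0$; note that Corollary~\ref{Cor:Compare}, under the strictly stronger hypothesis $V=\L(t)|x|^\S$ on all of $\R^n$, attains $\V_0=1$ via Remark~\ref{Rem:AftLow}(i) and hence the full sublinear range $0<p<1$---a result that would be redundant if Corollary~\ref{Cor:Comp1} already delivered $p<\V_0^{-1}\geq1$.
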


\begin{corollary}\label{Cor:Compare}
Assume as in Theorem \ref{Thm:Comp} but 
\begin{align}
V=\L(t)|x|^\S\quad(x\in\R^n,t>0),
\end{align}
where $\L(t)>0$ is a continuous function and $\S\in J_n^+$. If $0<p<1$ and $u_0\geq v_0$ with $u_0\neq0$, then
\[
u\geq v\quad\mbox{on $Q_{\t_0}$}.
\]
\end{corollary}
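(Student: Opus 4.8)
The plan is to derive Corollary~\ref{Cor:Compare} as a direct consequence of Theorem~\ref{Thm:Comp} together with Remark~\ref{Rem:SharpComp}~(i). The key observation is that when $V=\L(t)|x|^\S$ holds on the entire space $\R^n\times(0,\t_0)$, the lower bound for the potential in (\ref{HypV}) is valid on all of $\R^n$ (not merely for $|x|\geq1$), so Remark~\ref{Rem:AftLow}~(i) applies and we may take $\V_0=1$. Moreover, in this situation the constants in (\ref{HypV}) can be chosen as $\Uv=\Ov=1$, so the ratio satisfies $\Ov/\Uv=1$ and hence the critical exponent from (\ref{HypCompOnp}) becomes $p_0=\V_0(\Uv/\Ov)=\V_0\cdot 1=1$. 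Thus the restriction (\ref{HypCompOnp}), namely $0<p<\V_0(\Ov/\Uv)$, reduces to the trivially satisfied $0<p<1$.

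First I would verify that all hypotheses of Theorem~\ref{Thm:Comp} are met: the assumption (\ref{HypAL}) is carried over verbatim, and $V=\L(t)|x|^\S$ with $\L\in C(\R;\R_{>0})$ and $\S\in J_n^+$ clearly satisfies (\ref{Hyp:Vcomp})/(\ref{HypV}) with $\Uv=\Ov=1$. Second, I would invoke Remark~\ref{Rem:AftLow}~(i) to note that, since the lower bound $V(x,t)\geq\L(t)|x|^\S$ now holds for all $x\in\R^n$, the constant $\V_0$ appearing in estimate (\ref{Est:BVd}) equals $1$ (this uses Lemma~\ref{Lem:BasicBG}~(3), that $\CB(|x|^d)\geq|x|^d$ for $d\in J_n^+$, applied with $d$ ranging over $\S+d'$ for $d'\geq0$, all of which lie in $J_n^+$ since $\S\in J_n^+$ and $J_n^+$ is closed under addition of nonnegative reals). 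Third, with $\V_0=1$ and $\Uv=\Ov=1$, the hypothesis (\ref{HypCompOnp}) required by Theorem~\ref{Thm:Comp} is simply $0<p<1$, which is assumed. The remaining hypothesis $u_0\geq v_0\geq0$ with $u_0\neq0$ is identical in both statements. Applying Theorem~\ref{Thm:Comp} then yields $u\geq v$ on $Q_{\t_0}$, as desired.

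I do not anticipate a genuine obstacle here, since the corollary is essentially a specialization; the only point requiring mild care is the bookkeeping that establishes $\V_0=1$ in this special geometry, which is exactly the content of Remark~\ref{Rem:AftLow}~(i), and the observation that the constant $T>0$ from the proof of Theorem~\ref{Thm:Comp} (controlling the length of the first time-step in the semigroup iteration) depends only on $p$ and $p_0=1$, hence stays uniformly positive, so the finitely-many-steps argument on $[0,\t_0]$ goes through unchanged. In writing this up I would simply state that the result follows from Theorem~\ref{Thm:Comp} upon noting $\V_0=1$ and $\Ov/\Uv=1$, reducing (\ref{HypCompOnp}) to $0<p<1$.
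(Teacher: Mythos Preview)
Your proposal is correct and matches the paper's intended (unwritten) proof: the corollary follows from Theorem~\ref{Thm:Comp} via Remark~\ref{Rem:SharpComp}~(i), which gives $\V_0=1$ when the lower bound in (\ref{HypV}) holds on all of $\R^n\times(0,\t_0)$, so that with $\Uv=\Ov=1$ condition (\ref{HypCompOnp}) reduces to $0<p<1$. One minor slip: your parenthetical that ``$J_n^+$ is closed under addition of nonnegative reals'' is false for $n=1$ (where $J_1^+=\{0\}\cup[1,\infty)$), but this does not affect the argument since you are invoking Remark~\ref{Rem:AftLow}~(i) directly, and in any case the specific exponents $\S+d$ that actually arise in the iteration of Theorem~\ref{Thm:uniflower} do lie in $J_n^+$ whenever $\S\in J_n^+$ (when $\S=0$ all the $d$'s are $0$, and when $\S\geq(2-n)_+$ one has $\S+d\geq(2-n)_+$).
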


We conclude this section with the following existence and uniqueness result.

\begin{theorem}[Well-posedness]\label{Thm:Unique}

Assume (\ref{HypAL}), (\ref{Hyp:Vcomp}) (see \ref{HypV}), and $0\leq u_0\in BC_a$ with $u_0\neq0$.
\begin{enumerate}
\item[(1)] If $0<p<\V_0(\Uv/\Ov)$ then Eq.\ (\ref{Eqn:Main}) has a unique global s-solution 
\[
0\leq u\in \CZ_{\I,a}.
\]
\item[(2)] Assume further that $V=\L(t)|x|^\S$ where $\L(t)>0$ is a continuous function and $\S\in J_n^+$. If $0<p<1$ then Eq.\ (\ref{Eqn:Main}) has a unique global s-solution 
\[
0\leq u\in\CZ_{\I,a}.
\]
\end{enumerate}

\end{theorem}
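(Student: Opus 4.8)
The statement splits cleanly into an existence half, which adapts the machinery of Section~\ref{Sec:Glob}, and a uniqueness half, where the comparison principle does all the work.

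\emph{Existence of a global s-solution.} Hypothesis (\ref{Hyp:Vcomp}) forces $0\le V(x,t)\le\Ov\L(t)|x|^\S$, which is exactly (\ref{HypVU}) with $\L$ rescaled by $\Ov$; together with (\ref{HypAL}) this supplies every structural bound needed below. The plan is to rerun the approximation argument of Section~\ref{Sec:Glob} verbatim, but for the modified formulation: replace the map $\CK$ built from $\CG(t)$ by its analogue $\CM^s_{(u_{0,m},F_m)}$, i.e. drop the Green operator and insert the weight $e^{(1-p)\t}$, take $u_{0,m}=u_0+\tfrac{1}{m\LL}$ and $F_m$ as in the proof of Theorem~\ref{Thm:Existence}, and solve the Lipschitz-truncated problems $\mu_m=\CM^s_{(u_{0,m},F_m)}\mu_m$. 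The contraction-mapping step survives because on each finite interval $[0,T]$ the estimates of Proposition~\ref{Prop:BoundBG}(2) and Lemma~\ref{Lem:LamV} still hold, now with the harmless factor $e^{T/\th}$ in place of $\B(t)\le1$; the sequence $\{\mu_m\}$ is pointwise non-increasing past some index by the Gronwall comparison of Proposition~\ref{Prop:CompW}; and the pointwise limit $\mu$ satisfies $\mu=\CM^s\mu$ by the monotone convergence theorem applied to the two nonlinear integral terms, exactly as in Section~\ref{Sec:Glob}. Then $u\triangleq e^{-t}\mu\in\CZ_{\I,a}$ is the desired global s-solution. (Alternatively, since $\CB$ is bounded on $BC_a$ by Proposition~\ref{Prop:BoundBG}(2), the substitution $u=e^{-t}\mu$ turns (\ref{Eqn:MildOrig}) and (\ref{Eqn:MildMod}) into two Duhamel forms of the single evolution $\P_t\mu=\CB\mu+e^{(1-p)t}\CB[V\mu^p]$, so the mild solution produced by Theorem~\ref{Thm:Existence} is automatically an s-solution.)

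\emph{Uniqueness.} Let $u_1,u_2$ be two global s-solutions of (\ref{Eqn:Main}) sharing the initial value $u_0\neq0$. Each of them is simultaneously an s-supersolution and an s-subsolution. In case (1) the hypothesis is precisely $0<p<\V_0(\Uv/\Ov)$, so Theorem~\ref{Thm:Comp} applies with $u_1$ playing the role of the s-supersolution and $u_2$ that of the s-subsolution: the initial inequality $u_0\ge u_0$ holds and $u_0\neq0$, whence $u_1\ge u_2$ on $Q_\I$; exchanging the two roles gives $u_2\ge u_1$, hence $u_1\equiv u_2$. In case (2), where $V=\L(t)|x|^\S$ on all of $\R^n\times(0,\I)$, one has $\V_0=1$ (Remark~\ref{Rem:AftLow}(i)) and $\Uv=\Ov=1$, so the threshold $\V_0(\Uv/\Ov)$ collapses to $1$; the same two-sided argument, now invoking Corollary~\ref{Cor:Compare} in place of Theorem~\ref{Thm:Comp}, yields $u_1\equiv u_2$ for every $p\in(0,1)$.

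\emph{Where the difficulty lies.} The only genuine work is the s-solution upgrade in the existence step: one must check that every estimate in Section~\ref{Sec:Glob} is unaffected by replacing $\CG(t)$ with the unbounded multiplier $e^{(1-p)\t}$ inside $\CM^s$ (harmless on bounded time intervals, then patched together by the semigroup property as in Proposition~\ref{Prop:wcomp}), and that the monotone limit indeed solves the $\CM^s$-equation. Once this is in place the rest is mechanical; the single hypothesis that cannot be dropped is $u_0\neq0$, since it is exactly what triggers the lower grow-up bound of Theorem~\ref{Thm:uniflower} on which the comparison theorem, and therefore uniqueness, depends.
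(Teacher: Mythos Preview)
Your proposal is correct and matches the paper's intended approach: the paper states Theorem~\ref{Thm:Unique} without proof, treating it as immediate from the existence result (Theorem~\ref{Thm:Existence}) and the comparison principle (Theorem~\ref{Thm:Comp} / Corollary~\ref{Cor:Compare}), and your uniqueness half is exactly the two-sided application of comparison that the paper has in mind. You add genuine value by flagging the \emph{s-solution existence} issue --- the paper silently assumes it (see the invocation of Theorem~\ref{Thm:Unique} in the proof of Theorem~\ref{Thm:UniqueMax} to produce an s-solution $\mu_m=\CM^s_{(1/m,V)}\mu_m$), and your proposal correctly shows how to obtain it either by rerunning Section~\ref{Sec:Glob} for $\CM^s$ or by observing that both integral formulations arise from the same abstract ODE $\P_t\mu=\CB\mu+e^{(1-p)t}\CB[V\mu^p]$ with $\CB$ bounded on $BC_a$.

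Two small corrections. First, $\B(t)=e^{(\theta^{-1}-1)t}\ge 1$, not $\le 1$; your point that the extra factor $e^{(1-p)\tau}\le e^{(1-p)T}$ is harmless on bounded intervals is still right, but the slip should be fixed. Second, your parenthetical alternative (mild $\Rightarrow$ s-solution) is correct but is not free: one must note that since $\CB$ is bounded and $t\mapsto u(t)$ is continuous in $BC_a$, the Duhamel expression $e^t\CM u$ is $C^1$ in $t$, so one may differentiate, recover $\P_t\mu=\CB\mu+e^{(1-p)t}\CB[V\mu^p]$, and then integrate back to $\mu=\CM^s\mu$. Without that remark the equivalence of the two integral forms is asserted rather than shown.
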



\section{Non-uniqueness of solutions; classifications}\label{Sec:Zero}

In this section, we study the non-uniqueness of positive solutions for (\ref{Eqn:Main}) in the zero-initial case, Eq.\ (\ref{Eqn:zero}). We assume that $V,p$ and $u\in BC_a$ with $a\geq0$ satisfy (\ref{HypAL}), (\ref{Hyp:Vcomp}) (see \ref{HypV}), (\ref{HypCompOnp}) respectively, that is
\begin{align}\label{HypNonUnq}
\begin{cases}
\displaystyle V(x,t)\geq\Uv\L(t)|x|^\S\quad(|x|\geq1,t>0),\\
\vspace{-10pt}\\
\displaystyle V(x,t)\leq\Ov\L(t)|x|^\S\quad(x\in\R^n,t>0),\\
\vspace{-10pt}\\
0<p<p_0\triangleq\V_0(\Uv/\Ov),\\
\vspace{-10pt}\\
\displaystyle a\geq\frac{\S}{1-p},\quad\S\in J_n^+,
\end{cases}
\end{align}
where $\L(t)>0$ is a continuous function and $\S\in J_n^+,\Ov,\Uv>0$ are constants. Then, according to Theorem \ref{Thm:Existence}, the problem (\ref{Eqn:zero}) admits a global s-solution $0\leq u\in\CZ_{\I,a}$. Of course $u\equiv0$ is a solution. However, it will be shown that such a solution is not unique, in fact, the family of solutions is continuum.\smallskip

In the case $V=\L=\mbox{a constant}>0$, it was shown \cite{Khomrutai15} that all non-trivial s-solutions of the problem (\ref{Eqn:zero}) have the form
\begin{align}
u=u_\ast(\left[t-\kappa\right]_+)\quad\mbox{for some $\kappa\geq0$},
\end{align}
where 
\begin{align}
u_\ast(t)=\left((1-p)\L t\right)^{\frac{1}{1-p}}
\end{align}
is the maximal solution to (\ref{Eqn:zero}) with $V=\L$. Observe that $u_\ast$ is the unique solution to the problem
\begin{align*}
\begin{cases}
\displaystyle
\frac{d\vp}{dt}=\L\vp^p,\\
\vspace{-10pt}\\
\displaystyle\vp(0)=0,\quad\vp>0,
\end{cases}
\end{align*}
which is the problem of finding spatial independent solutions for (\ref{Eqn:Main}) in this case.
The function $u_\ast([\cdot-\kappa]_+)$ is simply the delay by time $\kappa$ of $u^\ast$. 
\smallskip

We prove in this section that a similar result holds when $V=\L(t)$, a positive continuous function. Then we generalize the result to the case when the potential is {\it time-independent}. The situation is much more complicated if $V$ is both $x$- and $t$-dependent. In the latter case, a one-side {\it convexity condition} has to be imposed on the potential. 
\smallskip

Let us introduce the following important definition.

\begin{definition}[Maximal solutions]

A function $0\leq u_\ast=e^{-t}\mu_\ast\in\CZ_{\I,a}$ is called a maximal s-solution of Eq.\ (\ref{Eqn:zero}) if it satisfies $\mu_\ast=\CM^s_{(0,V)}\mu_\ast$, where $\CM^s_{(0,V)}$ is defined by (\ref{Def:Ms}), and if $0\leq u=e^{-t}\mu\in\CZ_{\I,a}$ also satisfies $\mu=\CM^s_{(0,V)}\mu$ then 
\begin{align}
u\leq u_\ast\qquad\mbox{on $Q_\I$}.
\end{align}

\end{definition}

We have the following uniqueness of maximal s-solution result.

\begin{theorem}[Uniquness of maximal solutions]\label{Thm:UniqueMax}
Assume (\ref{HypNonUnq}). Then the problem (\ref{Eqn:zero}) admits a unique maximal s-solution.
\end{theorem}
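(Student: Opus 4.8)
The strategy is to establish \emph{existence} of a maximal s-solution by monotone approximation from above; once that is done, uniqueness is immediate, since any two maximal s-solutions must each dominate the other by the very definition. First I would fix a sequence of nonzero initial data decreasing to zero, e.g.\ $u_{0,m}=\tfrac1m\LL^{-1}\in BC_a$, so that $u_{0,m}\geq0$, $u_{0,m}\neq0$, and $u_{0,m}\downarrow0$ pointwise. Under (\ref{HypNonUnq}) the hypotheses (\ref{HypAL}), (\ref{Hyp:Vcomp}), (\ref{HypCompOnp}) all hold, so Theorem \ref{Thm:Unique} furnishes, for each $m$, a unique global s-solution $0\leq u_m\in\CZ_{\I,a}$, i.e.\ $\mu_m\triangleq e^tu_m$ satisfies $\mu_m=\CM^s_{(u_{0,m},V)}\mu_m$ on $Q_\I$. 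Since $u_m$ is an s-supersolution with nonzero initial datum, $u_{m+1}$ an s-subsolution, and $u_{0,m}\geq u_{0,m+1}\geq0$, the comparison theorem (Theorem \ref{Thm:Comp}) gives $u_m\geq u_{m+1}$. Hence $\{u_m\}$ is pointwise non-increasing and I may define $u_\ast=\lim_{m\to\I}u_m\geq0$ and $\mu_\ast=e^tu_\ast$.

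Next I would let $m\to\I$ in $\mu_m=\CM^s_{(u_{0,m},V)}\mu_m$ pointwise on $Q_\I$. The term $u_{0,m}\to0$; for the two integral terms, $\mu_m\downarrow\mu_\ast$ together with positivity of $\CB$ gives $\CB\mu_m\downarrow\CB\mu_\ast$ and $\CB(V\mu_m^p)\downarrow\CB(V\mu_\ast^p)$ pointwise, while the equation satisfied by $\mu_1$ shows $\int_0^t\CB\mu_1(\t)\,d\t<\I$ and $\int_0^te^{(1-p)\t}\CB(V\mu_1^p)(\t)\,d\t<\I$; so dominated convergence yields $\mu_\ast=\CM^s_{(0,V)}\mu_\ast$. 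That $u_\ast\in\CZ_{\I,a}$ then follows exactly as in the proof of Theorem \ref{Thm:Existence}: the bound $0\leq u_\ast\leq u_1$ with Proposition \ref{Prop:BoundBG} (2) gives $\CM^s_{(0,V)}\mu_\ast(t)\in BC_a$ and continuity of $t\mapsto\mu_\ast(t)$ in $BC_a$ as in Proposition \ref{Prop:wcomp}, so $u_\ast$ is an s-solution of (\ref{Eqn:zero}). For maximality, if $0\leq u\in\CZ_{\I,a}$ is any s-solution of (\ref{Eqn:zero}), then $u$ is an s-subsolution with zero initial datum while $u_m$ is an s-supersolution with nonzero initial datum $\geq0$, so Theorem \ref{Thm:Comp} gives $u\leq u_m$ for every $m$, and letting $m\to\I$ yields $u\leq u_\ast$. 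Finally, if $u_\ast$ and $\T u_\ast$ are two maximal s-solutions, applying the maximality of each to the other gives $u_\ast\leq\T u_\ast$ and $\T u_\ast\leq u_\ast$, hence $u_\ast=\T u_\ast$.

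I expect the main obstacle to be the passage to the limit certifying that $u_\ast$ is a genuine s-solution rather than merely a function satisfying the equation pointwise — in particular, establishing $u_\ast\in\CZ_{\I,a}$ (continuity in $x$ and in $t$), which is precisely where one must reuse the almost-contraction estimates of Proposition \ref{Prop:BoundBG} and the integrability coming from the equation for $u_1$. The remaining ingredients (monotonicity of the approximants, maximality, and the uniqueness conclusion) are direct consequences of the comparison theorem and of the definition of maximal s-solution.
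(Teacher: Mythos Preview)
Your proposal is correct and follows essentially the same approach as the paper: approximate from above by the unique s-solutions with initial data decreasing to zero, use the comparison theorem to get monotonicity, pass to the limit in the $\CM^s$-equation, and invoke comparison again for maximality. The only cosmetic differences are that the paper takes the constant initial data $u_{0,m}=1/m$ rather than your $\tfrac{1}{m}\LL^{-1}$, and appeals to monotone convergence rather than dominated convergence (both work here since the sequence is decreasing and nonnegative); the paper also records the lower grow-up bound from Theorem~\ref{Thm:uniflower} for $u_\ast$, but this is not needed for the existence/uniqueness argument itself.
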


\begin{proof}
We prove the existence. For each $m\in\BN$, let $0\leq u_m=e^{-t}\mu_m\in\CZ_{\I,a}$ be the unique global solution of the the problem $\mu_m=\CM^s_{(1/m,V)}\mu_m$ (Theorem \ref{Thm:Unique} (1)), that is $\mu_m$ satisfies
\begin{align}
\mu_m=\frac{1}{m}+\int_0^t\CB\mu_m(\t)d\t+\int_0^te^{(1-p)\t}\CB\left(V\mu_m^p\right)(\t)\,d\t.
\end{align}
For all $\ell>m$, we obtain by the comparison principle (Theorem \ref{Thm:Comp}) and the lower grow-up rate (Theorem \ref{Thm:uniflower}) that 
\begin{align}
u_m(t)\geq u_\ell(t)\geq\left(\V_0\Uv(1-p)|x|^\S\L_\ast(t)\right)^{\frac{1}{1-p}}\quad\mbox{(on $Q_\I$)}.
\end{align}
So $\{u_m\}_{m=1}^\I$ is pointwise non-increasing and bounded below, hence there is a function $u_\ast=e^{-t}\mu_\ast$ such that $u_m\to u_\ast$ pointwise as $m\to\I$. Moreover, $u_\ast$ is bounded below by
\begin{align}
u_\ast(t)\geq\left(\V_0\Uv(1-p)|x|^\S\L_\ast(t)\right)^{\frac{1}{1-p}}.
\end{align}
Applying the monotone convergence theorem as $m\to\I$, it follows that $\mu_\ast$ satisfies $\mu_\ast=\CM^s_{(0,V)}\mu_\ast$. Employing the same argument as in the proof of Theorem \ref{Thm:Existence}, one can show that $u_\ast\in\CZ_{\I,a}$.\smallskip

To show that $u_\ast$ is maximal, we assume that $0\leq u\in\CZ_{\I,a}$ is also an s-solution of (\ref{Eqn:zero}). By Theorem \ref{Thm:Comp} and the equation satisfied by $u_m$, then we have
\begin{align}
u(t)\leq u_m(t)\quad\mbox{(on $Q_\I$), $\forall\,m\geq1$.}
\end{align}
Since $u_m\to u_\ast$ as $m\to\I$, it follows that $u\leq u_\ast$ on $Q_\I$. So $u_\ast$ is a maximal s-solution of (\ref{Eqn:zero}). The uniqueness of $u_\ast$ is obvious.\QED
\end{proof}

\begin{notation}

For a given potential function $V$ satisfying (\ref{HypNonUnq}), the unique maximal s-solution of (\ref{Eqn:zero}) according to the preceding theorem will be denoted by $u_{\ast V}$. Observe that by the proof of the uniqueness of maximal solutions, we have
\begin{align}
u_{\ast V}(x,t)\geq\left(\V_0\Uv(1-p)|x|^\S\L_\ast(t)\right)^{\frac{1}{1-p}}\quad\mbox{on $Q_\I$}.
\end{align}

\end{notation}

The following result can proved easily using the semigroup property.

\begin{lemma}\label{Lem:Semizero}

Assume (\ref{HypNonUnq}). For $\kappa\geq0$, let $v=u^\kappa$ and $w=u([t-\kappa]_+)$ where $u\geq0$ is an s-solution of (\ref{Eqn:zero}) and $\kappa\geq0$. Then $v$ is an s-solution of the problem
\begin{align*}
\begin{cases}
\P_tv-\Lap\P_tv=\Lap v+V^\kappa(x,t)v^p& x\in\R^n,t>0,\\
\qquad v|_{t=0}=u(\kappa)&x\in\R^n,
\end{cases}
\end{align*}
and $w$ is an s-solution of the problem
\begin{align*}
\begin{cases}
\P_tw-\Lap\P_tw=\Lap w+V^{-\kappa}(x,t)w^p& x\in\R^n,t>0,\\
\qquad w|_{t=0}=0&x\in\R^n,
\end{cases}
\end{align*}
where $V^\kappa\triangleq V(x,t+\kappa)$ and $V^{-\kappa}\triangleq V(x,[t-\kappa]_+)$.

\end{lemma}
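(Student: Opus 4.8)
The plan is to verify both assertions directly from the integral equation $\mu=\CM^s_{(0,V)}\mu$ satisfied by $\mu\triangleq e^tu$, combined with the semigroup identity for $\CM^s$ recorded in Remark~\ref{Rem:MsSem}. Note first that evaluating that equation at $t=0$ gives $\mu(0)=0$, hence $u(\cdot,0)=0$; this is used for the delay.

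\textbf{The forward translation $v=u^\kappa$.} Set $\T\mu\triangleq e^tv=e^{-\kappa}\mu^\kappa$. Since $u$ solves the zero-initial problem, $\mu^\dag\triangleq\CM^s\mu=\CM^s_{(0,V)}\mu=\mu$, so the identity of Remark~\ref{Rem:MsSem}, applied with $\t=\kappa$, reads $\T\mu=\CM^s_{(e^{-\kappa}\mu(\kappa),\,V^\kappa)}\T\mu$. Because $e^{-\kappa}\mu(\kappa)=u(\kappa)$, this is exactly the statement that $v$ is an s-solution of the Cauchy problem with data $(u(\kappa),V^\kappa)$; membership $\T\mu\in\CZ_{\I,a}$ is inherited from $\mu\in\CZ_{\I,a}$ by time translation.

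\textbf{The delay $w=u^{-\kappa}$.} Put $\nu\triangleq e^tw$. Because $u(\cdot,0)=0$, we have $\nu\equiv0$ on $[0,\kappa]$, while $\nu(x,t)=e^\kappa\mu(x,t-\kappa)$ for $t>\kappa$; in particular $\nu$ is continuous across $t=\kappa$ and lies in $\CZ_{\I,a}$. It remains to check $\nu=\CM^s_{(0,V^{-\kappa})}\nu$. For $t\leq\kappa$ both integrals in $\CM^s_{(0,V^{-\kappa})}\nu(t)$ vanish since $\nu$ and $\nu^p$ are zero on $[0,t]$, matching $\nu(t)=0$. For $t>\kappa$ I split each time integral at $\kappa$, discard the part over $[0,\kappa]$ (again by $\nu\equiv0$ there), substitute the variable by a shift of $\kappa$, and use $V^{-\kappa}(x,s+\kappa)=V(x,s)$ together with $\nu(x,s+\kappa)=e^\kappa\mu(x,s)$. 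The exponential factors combine as $e^{(1-p)(s+\kappa)}e^{p\kappa}=e^\kappa e^{(1-p)s}$ in the nonlinear term and as $e^\kappa$ in the linear term, so the right-hand side equals $e^\kappa\bigl(\int_0^{t-\kappa}\CB\mu(s)\,ds+\int_0^{t-\kappa}e^{(1-p)s}\CB[V\mu^p](s)\,ds\bigr)=e^\kappa\mu(t-\kappa)=\nu(t)$, the middle equality being the integral equation for $\mu$ at time $t-\kappa$. Hence $w$ is an s-solution of the zero-initial problem with potential $V^{-\kappa}$.

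The argument is entirely elementary; the only point requiring care is the bookkeeping of the exponential weights $e^{\pm\kappa}$, $e^{(1-p)\kappa}$ and $e^{p\kappa}$ produced by the change of variables, together with the piecewise definition of $[t-\kappa]_+$, so that the shifted integral equation closes up exactly — there is no genuine analytic obstacle. As an aside, the hypotheses in~(\ref{HypNonUnq}) are manifestly preserved under both $V\mapsto V^\kappa$ and $V\mapsto V^{-\kappa}$, with $\L$ replaced by $\L(\cdot+\kappa)$ and $\L([\cdot-\kappa]_+)$ respectively, so the lemma dovetails with the later classification results.
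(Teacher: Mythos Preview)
Your proof is correct and matches the paper's approach essentially line for line. The paper only writes out the delay case in detail (with the same change of variables and the same exponential bookkeeping $e^{(1-p)(\t+\kappa)}e^{p\kappa}=e^\kappa e^{(1-p)\t}$), and for the forward translation it simply defers to the semigroup property, which is exactly your appeal to Remark~\ref{Rem:MsSem}.
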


\begin{proof}
Let us prove the second assertion. We denote $\T V=V(x,[t-\kappa]_+)$. We can assume $\kappa>0$. Let 
\[
\omg(t)=e^tw(t)=e^tu([t-\kappa]_+)\quad\mbox{so}\quad\omg(t)=e^\kappa\mu([t-\kappa]_+).
\]
If $0\leq t\leq \kappa$ then $\omg(t)=0=\CM^s_{(0,\T V)}\omg(t)$ trivially. Let $t>\kappa$. Then we have
\begin{align}
\CM_{(0,\T V)}^s\omg(t)&=\int_\kappa^t\CB\omg(\t)d\t+\int_{\kappa}^te^{(1-p)\t}\CB\left(\T V\omg^p\right)(\t)\,d\t,\nonumber\\
&=\int_0^{t-\kappa}e^{\kappa}\CB\mu(\t)d\t+\int_0^{t-\kappa}e^{(1-p)(\t+\kappa)}e^{\kappa p}\CB\left(V\mu^p\right)(\t)\,d\t,\nonumber\\
&=e^\kappa\left(\int_0^{t-\kappa}\CB\mu(\t)d\t+\int_0^{t-\kappa}e^{(1-p)\t}\CB(V\mu^p)(\t)d\t\right),\nonumber\\
&=e^{\kappa}\mu(t-\kappa)=\omg(t).\label{NonUniqueCM}
\end{align}
Thus $w$ satisfies the second assertion.\QED
\end{proof}

Next we consider the case when $V=\L(t)$ where $\L>0$ is a continuous function. The following lemma is obvious so we omit the proof.

\begin{lemma}
Assume $\L(t)>0$ is a continuous function. The problem
\begin{align}
\begin{cases}
\displaystyle\frac{d}{dt}\vp=\L(t)\vp^p&t>0,\\
\vspace{-10pt}\\
\displaystyle \vp(0)=c\geq0,\quad\vp>0
\end{cases}
\end{align}
has the unique solution
\begin{align}
\vp(t)=\varPhi_\L(t;c)\triangleq\left(c^{1-p}+(1-p)\L_\ast(t)\right)^{\frac{1}{1-p}},\quad\L_\ast(t)=\int_0^t\L(\t)\,d\t.
\end{align}

\end{lemma}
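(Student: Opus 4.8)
The plan is to solve the ODE explicitly by separation of variables and then invoke a standard uniqueness argument, the only subtlety being the degenerate case $c=0$. First I would note that, wherever the solution exists and stays positive, the equation may be rewritten as $\vp^{-p}\,d\vp=\L(t)\,dt$; integrating from $0$ to $t$ and using $\vp(0)=c$ gives
\[
\frac{1}{1-p}\left(\vp(t)^{1-p}-c^{1-p}\right)=\L_\ast(t),
\]
whence $\vp(t)^{1-p}=c^{1-p}+(1-p)\L_\ast(t)$, i.e.\ $\vp(t)=\varPhi_\L(t;c)$. Conversely, a direct differentiation (using $\tfrac{1}{1-p}-1=\tfrac{p}{1-p}$ and $\L_\ast'=\L$) shows that $\varPhi_\L(\cdot;c)$ solves the ODE with the prescribed initial value; it is strictly positive for all $t>0$ because $\L_\ast$ is nondecreasing with $\L_\ast(t)>0$ for $t>0$. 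This establishes existence.

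For uniqueness, the point is that the right-hand side $s\mapsto\L(t)s^p$ is locally Lipschitz in $s$ on any set bounded away from $s=0$. If $c>0$, Picard--Lindel\"of yields a unique solution on a maximal interval, and since $\varPhi_\L(\cdot;c)$ is already a positive global solution, it is the only one. If $c=0$, the Lipschitz property fails only at the initial instant; but any admissible solution satisfies $\vp>0$ on $(0,\I)$, so the separated identity above holds on every subinterval $[\V,t]$ with $\V\in(0,t)$. Letting $\V\to0^+$ and using the continuity of $\vp$ at $0$ forces $\vp(t)^{1-p}=(1-p)\L_\ast(t)$, again identifying $\vp$ with $\varPhi_\L(\cdot;0)$.

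The main (and only) obstacle is this $c=0$ case: one must genuinely use the imposed positivity requirement ``$\vp>0$'' to exclude the trivial solution $\vp\equiv0$ and to justify passing to the limit $\V\to0^+$ in the separated identity; once that is handled, everything reduces to routine calculus. Hence the lemma follows.
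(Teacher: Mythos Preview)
Your proof is correct and is exactly the standard separation-of-variables argument one would expect; the paper itself declares the lemma ``obvious'' and omits the proof entirely. Your careful handling of the $c=0$ case via the imposed positivity constraint is the right way to resolve the only genuine subtlety.
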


\begin{remark}\label{Rem:VLt}

If $\L\equiv0$ on $[0,t_0]$ for some $t_0>0$ then $\vp\equiv c$ on $[0,t_0]$. The same conclusion holds for the pseudoparabolic problem:
\begin{align*}
\P_tu-\Lap\P_tu=\Lap u+\L(t)u^p,\quad u|_{t=0}=0,
\end{align*}
that is if $\L\equiv0$ on $[0,t_0]$ then $u\equiv c$ on $Q_{t_0}$. Therefore in the following analysis it is no loss of generality in assuming $\L(t)>0$ on some $[0,t_0]$, $t_0>0$.

\end{remark}

\begin{theorem}[Classification I]\label{Thm:Classf1}

Assume $V=\L(t)\geq0$ is a continuous function. A function $u\geq0$ is a nontrivial solution of the problem (\ref{Eqn:zero}) if and only if there is a constant $\kappa\geq0$ such that
\[
u=\left((1-p)(\L^\kappa)_\ast([t-\kappa]_+)\right)^{\frac{1}{1-p}}=\left((1-p)\int_0^{[t-\kappa]_+}\L(s+\kappa)ds\right)^{\frac{1}{1-p}}.
\]
\end{theorem}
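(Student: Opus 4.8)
The plan is to prove the two implications separately: ``$\Leftarrow$'' by direct verification, and ``$\Rightarrow$'' by squeezing $u$ between the maximal s-solution from above and the lower grow-up bound of Theorem \ref{Thm:uniflower} from below. The common preliminary I would record first is that for a purely time-dependent potential $V=\L(t)$ the maximal s-solution of \eqref{Eqn:zero} is spatially homogeneous and explicit. In the construction behind Theorem \ref{Thm:UniqueMax} it is the decreasing pointwise limit of the unique solutions $u_m$ having initial datum $1/m$; by the uniqueness statement (Theorem \ref{Thm:Unique}(2), applicable since $V=\L(t)|x|^0$) each $u_m$ coincides with the unique solution $\varPhi_\L(\cdot;1/m)$ of the scalar problem $\vp'=\L(t)\vp^p$, $\vp(0)=1/m$, from the lemma preceding this theorem; passing to the limit gives $u_{\ast\L}(t)=\varPhi_\L(t;0)=\bigl((1-p)\L_\ast(t)\bigr)^{1/(1-p)}$, and likewise $u_{\ast\L^\kappa}(t)=\bigl((1-p)(\L^\kappa)_\ast(t)\bigr)^{1/(1-p)}$ for the time-shifted potential $\L^\kappa(t)\triangleq\L(t+\kappa)$.

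For ``$\Leftarrow$'', given $\kappa\geq0$ I set $u(x,t)=\bigl((1-p)(\L^\kappa)_\ast([t-\kappa]_+)\bigr)^{1/(1-p)}$. This $u$ is independent of $x$, so $\Lap u=\Lap\P_tu=0$; a short computation shows it is $C^1$ in $t$ with $\P_tu\equiv0$ on $[0,\kappa]$ (the right derivative at $\kappa$ equals $\L(\kappa)\cdot0^p=0$ since $0<p<1$) and $\P_tu=\L(t)u^p$ for $t>\kappa$. Thus $u$ solves $\P_tu-\Lap\P_tu=\Lap u+\L(t)u^p$ with $u(\cdot,0)=0$; passing to $\mu=e^tu$, integrating in time, and using $\CB1=1$, $\CG(t)1=1$ (Lemma \ref{Lem:BasicBG}(2)) so that the Bessel and Green operators act trivially on constants, one obtains $\mu=\CM^s_{(0,\L)}\mu$, i.e. $u$ is an s-solution of \eqref{Eqn:zero}; it is nontrivial precisely when $\L$ does not vanish identically on $[\kappa,\I)$.

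For ``$\Rightarrow$'', let $u\geq0$ be a nontrivial s-solution and put $\kappa=\inf\{t\geq0:u(\cdot,t)\neq0\}$, which is finite by nontriviality; continuity of $t\mapsto u(\cdot,t)$ in $BC_a$ together with $u(\cdot,0)=0$ forces $u(\cdot,t)=0$ on $[0,\kappa]$, in particular $u(\cdot,\kappa)=0$. The upper bound is maximality: by Lemma \ref{Lem:Semizero} the shift $u^\kappa$ is an s-solution of the zero-initial problem with potential $\L^\kappa$, so $u^\kappa(t)\leq u_{\ast\L^\kappa}(t)$, that is $u(x,t)\leq\bigl((1-p)(\L^\kappa)_\ast([t-\kappa]_+)\bigr)^{1/(1-p)}$. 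For the matching lower bound, fix $t>\kappa$ and choose $s\in(\kappa,t)$ with $u(\cdot,s)\neq0$ (possible since $\kappa$ is an infimum); then $u^s$ is an s-supersolution of \eqref{Eqn:Main} whose potential $V^s(x,t)=\L(t+s)$ satisfies \eqref{HypVLp} with $\S=0$ on all of $\R^n$, hence with $\V_0=1$ (cf.\ Remark \ref{Rem:AftLow}(i)), so Theorem \ref{Thm:uniflower} gives $u(x,t)=u^s(x,t-s)\geq\bigl((1-p)(\L_\ast(t)-\L_\ast(s))\bigr)^{1/(1-p)}$. Letting $s\downarrow\kappa$ through admissible values and using $\L_\ast(t)-\L_\ast(\kappa)=(\L^\kappa)_\ast(t-\kappa)$ reproduces the upper bound; for $t\leq\kappa$ both sides vanish. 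Hence $u$ equals the asserted formula.

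The step I expect to be most delicate is reconciling the hypothesis $V=\L(t)\geq0$, which permits $\L$ to vanish, with the strict positivity used in the references invoked: if $\L\equiv0$ on an initial interval I would first remove it via Remark \ref{Rem:VLt} (so that $\L_\ast(t)>0$ for all $t>0$ and the denominators appearing in the comparison and maximal-solution machinery are harmless), and for zeros occurring later I would either note that the proofs of Lemma \ref{Lem:BesselLower} and Theorem \ref{Thm:uniflower} only use $V\geq\L(t)|x|^\S$ with $\L\geq0$, or else apply Theorem \ref{Thm:uniflower} to continuous $\widetilde\L$ with $0<\widetilde\L\leq\L$ and take the supremum of the resulting bounds over such $\widetilde\L$. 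The remaining points — identifying the $u_m$ with the scalar ODE solutions, the $C^1$ matching at $t=\kappa$, and the value $\V_0=1$ — are routine consequences of results already established.
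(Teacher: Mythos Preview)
Your proof is correct and follows essentially the same strategy as the paper's: identify the maximal solution explicitly as $\varPhi_\L(\cdot;0)$, bound $u$ above by maximality of $u_{\ast\L^\kappa}$, and bound it below by applying Theorem \ref{Thm:uniflower} after a small time shift $s>\kappa$ with $u(\cdot,s)\neq0$ and letting $s\downarrow\kappa$. The only cosmetic differences are that the paper separates the cases $\kappa=0$ and $\kappa>0$ (first invoking Lemma \ref{Lem:lowert0} to get $u(x,t)>0$ for all $x,t>0$ before applying the grow-up bound), identifies $u_{\ast\L}$ via Corollary \ref{Cor:Compare} applied to $\varPhi_\L(\cdot;c)$ rather than via uniqueness of the approximants $u_m$, and handles the converse by citing Lemma \ref{Lem:Semizero} instead of direct verification; your treatment of the $\L\geq0$ versus $\L>0$ issue is also the same reduction the paper makes through Remark \ref{Rem:VLt}.
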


\begin{proof}

The converse part can be proved easily using Lemma \ref{Lem:Semizero}. By Remark \ref{Rem:VLt}, we can assume $\L(t)>0$ on some $[0,t_0]$. This implies $\L_\ast(t)>0$ for all $t>0$ hence $u_{\ast\L}>0$ where $u_{\ast\L}$ is the maximal solution. Clearly we have
\[
u\leq u_{\ast\L}\quad\mbox{and}\quad\varPhi_\L(t;0)\leq u_{\ast\L}.
\]
On the other hand, for any $c>0$, we have by the comparison principle (Corollary \ref{Cor:Compare}) that
\[
\varPhi_\L(t;c)\geq u_{\ast\L}\,\,(\forall c>0)\quad\overset{c\to0}{\Rightarrow}\quad\varPhi_\L(t;0)\geq u_{\ast\L}.
\]
Thus we have
\begin{align*}
u_{\ast\L}(t)=\left((1-p)\L_\ast(t)\right)^{q}.
\end{align*}

Define
\begin{align}
\kappa=\inf\{t>0:u(x,t)>0\,\,\mbox{for some $x\in\R^n$}\}.\label{NonUniquet0}
\end{align}
Consider the case $\kappa=0$. Then in this case, there is a sequence $\{(x_m,t_m)\}_{m=1}^\I$ such that $t_m\to0^+$ and $u(x_m,t_m)>0$ for all $m$. This implies $u(x,t)>0$ for all $x\in\R^n,t>0$ by Lemma \ref{Lem:lowert0}. Let $\t>0$. By Lemma \ref{Lem:Semizero}, $u^\t=u(\cdot+\t)$ is an s-solution of the problem
\begin{align*}
\P_tu^\t-\Lap\P_tu^\t=\Lap u^\t+\L(t+\t)(u^\t)^p,\quad u^\t|_{t=0}>0.
\end{align*}
So we get by the lower grow-up result (Theorem \ref{Thm:uniflower}) that
\[
u(x,t+\t)=u^\t(x,t)\geq\left((1-p)\int_0^t\L(s+\t)ds\right)^{q}\quad\overset{\t\to0}{\Rightarrow}\quad u(x,t)\geq\left((1-p)\L_\ast(t)\right)^q.
\]
Therefore
\[
u=\left((1-p)\L_\ast(t)\right)^q.
\]

Now assume $\kappa>0$. Then by Lemma \ref{Lem:Semizero}, $v=u^\kappa$ satisfies
\begin{align*}
\P_tv-\Lap\P_tv=\Lap v+\L(t+\kappa)v^p,\quad v|_{t=0}=0
\end{align*}
and $v(t)>0$ for all $t>0$. We can conclude by the previous case that
\[
v(t)=\left((1-p)\int_0^t\L(s+\kappa)ds\right)^q\quad\overset{t\geq\kappa}{\Longrightarrow}\quad u(t)=\left((1-p)\int_0^{t-\kappa}\L^\kappa(s)ds\right)^q.
\]
Since $u\equiv0$ on $[0,\kappa]$, we obtain the formula of $u$ as desired.\QED

\end{proof}

Next we characterize non-trivial s-solutions for (\ref{Eqn:zero}) in the case $V=V(x)$.

\begin{theorem}[Classification II]\label{Thm:Classf2}

Assume (\ref{HypNonUnq}) and furthermore $V=V(x)$. A function $0\leq u\in\CZ_{\I,a}$ is a nontrivial s-solution of (\ref{Eqn:zero}) if and only if $u=u_{\ast V}\left(x,[t-\kappa]_+\right)$ for some constant $\kappa\geq0$.

\end{theorem}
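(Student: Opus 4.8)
The plan is to imitate the proof of Theorem~\ref{Thm:Classf1} (Classification~I), the decisive feature being that when $V=V(x)$ is time-independent one has $V^\kappa=V^{-\kappa}=V$, so that every time translate of an s-solution of~(\ref{Eqn:zero}) is again an s-solution of the \emph{same} problem~(\ref{Eqn:zero}); this is precisely what makes a comparison against the maximal solution available. For the ``if'' direction I would simply invoke Lemma~\ref{Lem:Semizero}: if $u=u_{\ast V}(x,[t-\kappa]_+)$ with $\kappa\geq0$, then, since $V^{-\kappa}=V(x,[t-\kappa]_+)=V$, the function $u_{\ast V}([t-\kappa]_+)$ is an s-solution of~(\ref{Eqn:zero}), and it is nontrivial because $u_{\ast V}(x,t)\geq(\V_0\Uv(1-p)|x|^\S\L_\ast(t))^{1/(1-p)}>0$ for every $t>0$.

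For the ``only if'' direction, let $0\leq u\in\CZ_{\I,a}$ be a nontrivial s-solution of~(\ref{Eqn:zero}) and put $\kappa=\inf\{t>0:u(\cdot,t)\not\equiv0\}$ as in~(\ref{NonUniquet0}). I would first treat the case $\kappa=0$: here there is a sequence $\t_m\downarrow0$ with $u(\cdot,\t_m)\not\equiv0$, and by Lemma~\ref{Lem:Semizero} each translate $u^{\t_m}=u(\cdot,\cdot+\t_m)$ is an s-solution (hence s-supersolution) of~(\ref{Eqn:zero}) --- the shifted potential $V^{\t_m}=V$ still satisfies~(\ref{HypV}) with the same $\Uv,\Ov$ --- with initial value $u(\cdot,\t_m)\geq0$, $\not\equiv0$. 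Applying the comparison theorem~\ref{Thm:Comp} (its hypotheses~(\ref{HypAL}), (\ref{Hyp:Vcomp}), (\ref{HypCompOnp}) being contained in~(\ref{HypNonUnq})) to the pair consisting of $u^{\t_m}$ and the maximal s-solution $u_{\ast V}$, the latter being an s-subsolution with zero initial value, yields $u^{\t_m}\geq u_{\ast V}$ on $Q_\I$, i.e.\ $u(x,t+\t_m)\geq u_{\ast V}(x,t)$. Letting $m\to\I$ and using $u\in C([0,\I);BC_a)$ gives $u\geq u_{\ast V}$, while $u\leq u_{\ast V}$ holds by maximality (Theorem~\ref{Thm:UniqueMax}), so $u=u_{\ast V}$, which is the asserted formula with $\kappa=0$. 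The case $\kappa>0$ then reduces to the previous one: the definition of $\kappa$ together with the continuity of $t\mapsto u(\cdot,t)$ forces $u\equiv0$ on $Q_\kappa$ and $u(\cdot,\kappa)=0$, while $\inf\{t>0:u^\kappa(\cdot,t)\not\equiv0\}=0$; by Lemma~\ref{Lem:Semizero} the translate $v=u^\kappa$ is a nontrivial s-solution of~(\ref{Eqn:zero}) (again $V^\kappa=V$) with zero initial value, so $v=u_{\ast V}$ by the case already settled, whence $u(x,t)=u_{\ast V}(x,t-\kappa)$ for $t\geq\kappa$ and $u(x,t)=0=u_{\ast V}(x,0)$ for $0\leq t\leq\kappa$, i.e.\ $u=u_{\ast V}(x,[t-\kappa]_+)$.

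I expect the bookkeeping with Lemma~\ref{Lem:Semizero} and the limit $\t_m\downarrow0$ to be entirely routine; the only genuine point --- and the one that fails for truly $t$-dependent potentials, forcing a convexity hypothesis in the $V=V(x,t)$ case --- is that a time translate of a solution of~(\ref{Eqn:zero}) again solves~(\ref{Eqn:zero}), which is exactly why Theorem~\ref{Thm:Comp} can be run against the maximal solution. If one prefers to parallel the exposition of Theorem~\ref{Thm:Classf1} more closely, one may first use Lemma~\ref{Lem:lowert0} to upgrade $u(\cdot,\t_m)\not\equiv0$ to $u>0$ on $\R^n\times(0,\I)$, but this strengthening is not actually needed for the argument above.
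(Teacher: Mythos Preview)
Your proof is correct and follows essentially the same approach as the paper: both establish the ``if'' direction via Lemma~\ref{Lem:Semizero} and $V^{-\kappa}=V$, define $\kappa$ by~(\ref{NonUniquet0}), treat $\kappa=0$ by comparing a time-translate $u^\t$ (with nonzero initial value) against the maximal solution, pass to the limit $\t\to0$, and reduce $\kappa>0$ to the first case. The only cosmetic difference is that the paper first upgrades to $u>0$ everywhere and compares $u^\t$ against the delay $u_{\ast V}([\cdot-2\t]_+)$, whereas you compare $u^{\t_m}$ directly against $u_{\ast V}$; as you correctly note, the positivity upgrade is unnecessary since Theorem~\ref{Thm:Comp} only needs $u_0\neq0$.
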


\begin{proof} 
The converse part is true by Lemma \ref{Lem:Semizero} and the fact that $V$ is time-independent. \smallskip

Now we prove the direct part. Let $u$ be a nontrivial s-solution of (\ref{Eqn:zero}). Then we immediately have that $u\leq u_{\ast V}$. Define $\kappa$ by (\ref{NonUniquet0}).
Let us consider the case $\kappa=0$. Then we can argue as in the previous theorem to find that $u(x,t)>0$ for all $x\in\R^n,t>0$. Let $\t>0$. From the converse part and Lemma \ref{Lem:Semizero}, $w=u_{\ast V}([\cdot-2\t]_+)$ and $v=u^\t$ solve
\begin{align*}
\begin{cases}
\displaystyle\P_tw-\Lap\P_tw=\Lap w+V(x)w^p,\quad w|_{t=0}=0,\\
\vspace{-10pt}\\
\displaystyle\P_tv-\Lap\P_tv=\Lap v+V(x)v^p,\quad v|_{t=0}=u(\t)>0.
\end{cases}
\end{align*}
By the comparison principle, it follows that $w\leq v$ hence
\begin{align}
u_{\ast V}\left([t-2\t]_+\right)\leq u\left(t+\t\right)\quad\mbox{for all $\t>0$}.
\end{align}
Taking $\t\to0$, we conclude that $u\geq u_{\ast V}$, hence $u=u_{\ast V}$.\smallskip

Assume now that $\kappa>0$. According to Lemma \ref{Lem:Semizero}, then $u^\kappa$ solves
\begin{align*}
\P_tu^\kappa-\Lap\P_tu^\kappa=\Lap u^\kappa+V(x)(u^\kappa)^p,\quad u^\kappa|_{t=0}=0.
\end{align*}
We can conclude by the previous case that $u^\kappa=u_{\ast V}$. This implies
\begin{align}
u(x,t)=u_{\ast V}(x,t-\kappa)\quad\mbox{for all $x\in\R^n,t>\kappa$}.
\end{align}
Since $u(t)\equiv0$ for $0\leq t<\kappa$, we can conclude that $u(x,t)=u_{\ast V}\left(x,[t-\kappa]_+\right)$.\QED
\end{proof}

\begin{corollary}
Assume $V$ satisfies (\ref{Hyp:New}) with $\nu=0,\S\in J_n^+$, and $0<p<\V_0^{-1}$. A function $0\leq u\in\CZ_{\I,a}$ with $a\geq\frac{\S}{1-p}$ is a nontrivial s-solution of (\ref{Eqn:zero}) if and only if 
\[
u=u_{\ast V}\left([t-\kappa]_+\right)\quad\mbox{for some $\kappa\geq0$}.
\]
\end{corollary}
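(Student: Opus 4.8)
The plan is to reproduce the argument of Theorem~\ref{Thm:Classf2} for the time-independent potential $V=V(x)$ prescribed by (\ref{Hyp:New}), the only point needing comment being that for such a $V$ the comparison principle is available in the wider range $0<p<\V_0^{-1}$ through Corollary~\ref{Cor:Comp1}, rather than only for $0<p<p_0=\V_0(\Uv/\Ov)$ via the general Theorem~\ref{Thm:Comp}. What makes this legitimate at every step is that $V$ is time-independent: a time-translation $V\mapsto V^\kappa$ or a delay $V\mapsto V^{-\kappa}$ leaves $V$ unchanged, so every potential that arises below again has the form (\ref{Hyp:New}) and Corollary~\ref{Cor:Comp1} keeps applying.

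First I would construct the maximal s-solution $u_{\ast V}$, following the proof of Theorem~\ref{Thm:UniqueMax}. For $m\in\BN$, let $u_m\geq0$ be the s-solution of (\ref{Eqn:zero}) with initial datum $1/m$: existence is Theorem~\ref{Thm:Existence}, uniqueness is Corollary~\ref{Cor:Comp1} (valid since $0<p<\V_0^{-1}$). The family $\{u_m\}$ is pointwise nonincreasing by Corollary~\ref{Cor:Comp1} and, by Theorem~\ref{Thm:uniflower}, bounded below by $\big(\V_0(1-p)|x|^\S\L_\ast(t)\big)^{1/(1-p)}$, which is strictly positive for $t>0$ because $\S\in J_n^+$. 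Its pointwise limit $u_{\ast V}\in\CZ_{\I,a}$ is the maximal s-solution, strictly positive for $t>0$; the fact that it dominates every s-solution of (\ref{Eqn:zero}) and lies in $\CZ_{\I,a}$ is exactly as in Theorem~\ref{Thm:UniqueMax}.

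The converse part is immediate: if $u=u_{\ast V}([t-\kappa]_+)$ then $V^{-\kappa}=V$, and Lemma~\ref{Lem:Semizero} shows $u$ is an s-solution of (\ref{Eqn:zero}), nontrivial for $t>\kappa$. For the direct part, let $u\geq0$ be a nontrivial s-solution; by maximality $u\leq u_{\ast V}$ on $Q_\I$. Put $\kappa=\inf\{t>0:u(\cdot,t)\not\equiv0\}$. If $\kappa=0$, a sequence $(x_m,t_m)$ with $t_m\to0^+$, $u(x_m,t_m)>0$, forces (via Lemma~\ref{Lem:lowert0}, as in Theorem~\ref{Thm:Classf2}) that $u(x,t)>0$ for all $x\in\R^n$, $t>0$. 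Then for every $\t>0$, Lemma~\ref{Lem:Semizero} makes $u^\t$ an s-solution of (\ref{Eqn:zero}) with initial datum $u(\t)\not\equiv0$ and $u_{\ast V}([t-2\t]_+)$ an s-solution with zero initial datum; comparing the former (as s-supersolution, $u(\t)\geq0$ and $u(\t)\not\equiv0$) with the latter (as s-subsolution) by Corollary~\ref{Cor:Comp1} gives $u(t+\t)\geq u_{\ast V}([t-2\t]_+)$, and letting $\t\to0$ yields $u\geq u_{\ast V}$, hence $u=u_{\ast V}$. If $\kappa>0$, then $V^\kappa=V$ and Lemma~\ref{Lem:Semizero} makes $u^\kappa$ a nontrivial s-solution of (\ref{Eqn:zero}) with $\inf\{t>0:u^\kappa(\cdot,t)\not\equiv0\}=0$; the previous case gives $u^\kappa=u_{\ast V}$, and since $u\equiv0$ on $[0,\kappa)$ this is exactly $u=u_{\ast V}([t-\kappa]_+)$.

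The only delicate point is the bookkeeping just described: one must verify that each application of comparison is between problems whose potentials still satisfy (\ref{Hyp:New}) --- this is precisely where the time-independence $V=V(x)$ is essential, since $V^\kappa=V^{-\kappa}=V$ --- so that Corollary~\ref{Cor:Comp1}, valid for $0<p<\V_0^{-1}$, legitimately replaces Theorem~\ref{Thm:Comp} throughout, including in the re-derivation of the maximal solution. Everything else is a transcription of the proof of Theorem~\ref{Thm:Classf2}.
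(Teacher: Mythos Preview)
Your proposal is correct and takes essentially the same approach as the paper: the corollary is stated without proof, the intended argument being precisely to rerun Theorem~\ref{Thm:Classf2} (and the construction of $u_{\ast V}$ in Theorem~\ref{Thm:UniqueMax}) with Corollary~\ref{Cor:Comp1} replacing Theorem~\ref{Thm:Comp}, which is legitimate because a time-independent $V$ satisfying (\ref{Hyp:New}) is preserved under the translations $V\mapsto V^{\pm\kappa}$. One small inaccuracy: the lower bound $\big(\V_0(1-p)|x|^\S\L_\ast(t)\big)^{1/(1-p)}$ vanishes at $x=0$ when $\S>0$, so it is not strictly positive everywhere; but it is nontrivial for $t>0$, which is all that is needed.
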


Now we explore the more general case that $V$ is both space and time-dependent. 

\begin{theorem}[Classification III]\label{Thm:Classf3}

Assume (\ref{HypNonUnq}) 
and, in addition, that $V$ satisfies the following uniform convexity condition in $t$: $\exists$ a continuous function $\A(\t)>0$ such that $\A(\t)\to1$ as $\t\to0$ and
\begin{align}\label{Hyp:Convex}
V(x,t+\t)\geq\A(\t)V(x,t)\quad(x\in\R^n,t,\t>0).
\end{align}
Then $0\leq u\in\CZ_{\I,a}$ is a nontrivial s-solution of (\ref{Eqn:zero}) if and only if $u=u_{\ast V^{\kappa}}\left(x,[t-\kappa]_+\right)$ for some $\kappa\geq0$. 
\end{theorem}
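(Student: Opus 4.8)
The plan is to run the scheme of Theorems~\ref{Thm:Classf1}--\ref{Thm:Classf2}, with the forward time--translation $V\mapsto V^\kappa$ inserted wherever the potential must change, and to add the one new ingredient forced by the genuine $t$--dependence of $V$: a scaling identity for maximal solutions, used jointly with the one--sided convexity (\ref{Hyp:Convex}). First note that $V^\kappa(x,t)=V(x,t+\kappa)$ again satisfies (\ref{HypNonUnq}) (with $\L$ replaced by $\L(\cdot+\kappa)$) and (\ref{Hyp:Convex}) (with the same $\A$), so the maximal s-solution $u_{\ast V^\kappa}$ is well defined by Theorem~\ref{Thm:UniqueMax}.

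For the ``if'' part, apply the second statement of Lemma~\ref{Lem:Semizero} to the s-solution $u_{\ast V^\kappa}$ of the zero problem with potential $V^\kappa$: the delayed function $w=u_{\ast V^\kappa}(x,[t-\kappa]_+)$ is an s-solution of the zero problem with potential $(V^\kappa)^{-\kappa}(x,t)=V(x,[t-\kappa]_+ +\kappa)$. Since $w\equiv 0$ on $Q_\kappa$ while $(V^\kappa)^{-\kappa}(x,t)=V(x,t)$ for $t\geq\kappa$, the nonlinear terms $\CB\left[(V^\kappa)^{-\kappa}w^p\right]$ and $\CB\left[Vw^p\right]$ agree along $w$, so in fact $w=\CM^s_{(0,V)}w$; thus $w$ is a nontrivial s-solution of (\ref{Eqn:zero}).

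For the ``only if'' part, let $u\geq 0$ be a nontrivial s-solution of (\ref{Eqn:zero}) and set $\kappa=\inf\{t>0:u(\cdot,t)\not\equiv 0\}$ as in (\ref{NonUniquet0}). If $u(\cdot,t_0)\not\equiv 0$ for some $t_0>0$, then applying Lemma~\ref{Lem:lowert0} to $e^t u^{t_0}$ shows $u(\cdot,t)>0$ everywhere for every $t>t_0$; letting $t_0\downarrow\kappa$ gives $u(\cdot,t)>0$ for all $t>\kappa$, and by Lemma~\ref{Lem:Semizero} the translate $u^\kappa$ is a nontrivial s-solution of the zero problem with potential $V^\kappa$ that is strictly positive for $t>0$. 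Hence it suffices to treat the case $\kappa=0$: if $u$ is a nontrivial s-solution of (\ref{Eqn:zero}) with $u(\cdot,t)>0$ for all $t>0$, then $u=u_{\ast V}$; applying this to $u^\kappa$ and $V^\kappa$ and using $u\equiv 0$ on $Q_\kappa$ together with $u_{\ast V^\kappa}(\cdot,0)=0$ gives $u(x,t)=u_{\ast V^\kappa}(x,[t-\kappa]_+)$. So assume $\kappa=0$. By maximality $u\leq u_{\ast V}$, and only the reverse inequality is needed. Fix $\t>0$; by Lemma~\ref{Lem:Semizero} the translate $u^\t$ is an s-solution (hence s-supersolution) of (\ref{Eqn:Main}) with potential $V^\t$ and datum $u(\t)\not\equiv 0$, while $u_{\ast V^\t}$ (which exists, $V^\t$ obeying (\ref{HypNonUnq}) with the same $\Uv,\Ov$, hence the same $p_0$) is an s-subsolution with the \emph{same} potential $V^\t$ and zero datum, so the comparison theorem (Theorem~\ref{Thm:Comp}) yields $u(x,t+\t)\geq u_{\ast V^\t}(x,t)$ on $Q_\I$.

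It remains to let $\t\to 0$, and I would do so via two elementary facts about maximal solutions. (a) \emph{Scaling}: checking (\ref{Def:Ms}) directly, if $\mu$ is an s-solution of the zero problem with potential $W$ then $c\mu$ is an s-solution with potential $c^{1-p}W$ for every $c>0$; consequently $u_{\ast,\A W}=\A^{1/(1-p)}u_{\ast W}$ for any constant $\A>0$. (b) \emph{Monotonicity in the potential}: if $W_1\leq W_2$ then $u_{\ast W_1}\leq u_{\ast W_2}$, since $e^t u_{\ast W_1}$ is an s-subsolution (with zero datum) of (\ref{Eqn:Main}) with potential $W_2$, hence lies below each approximant $e^t u_{W_2,1/m}$ by Theorem~\ref{Thm:Comp}, and $u_{W_2,1/m}\to u_{\ast W_2}$ as $m\to\I$ as in the proof of Theorem~\ref{Thm:UniqueMax}. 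Now (\ref{Hyp:Convex}) gives $V^\t\geq\A(\t)V$ pointwise, so by (a)--(b) one gets $u_{\ast V^\t}(x,t)\geq\A(\t)^{1/(1-p)}u_{\ast V}(x,t)$, whence $u(x,t+\t)\geq\A(\t)^{1/(1-p)}u_{\ast V}(x,t)$; since $u\in\CZ_{\I,a}$ is continuous in $t$ and $\A(\t)\to 1$ as $\t\to 0$, passing to the limit gives $u\geq u_{\ast V}$, hence $u=u_{\ast V}$. The main obstacle is precisely this final limit: the forward translation turns the potential into $V^\t$, and one must bound $u_{\ast V^\t}$ from below by something tending to $u_{\ast V}$; the one-sided convexity (\ref{Hyp:Convex}) is exactly what makes the scaling/monotonicity step work, which is why a structural hypothesis on the $t$-dependence of $V$ cannot be dispensed with here.
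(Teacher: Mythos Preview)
Your proof is correct and follows essentially the same strategy as the paper, with one organizational difference worth noting. The paper keeps the potential $V$ fixed throughout: it verifies by direct calculation that $\A(\t)^{-q}u^\t$ is an s-supersolution and $\A(\t)^q u_{\ast V}([t-\t]_+)$ is an s-subsolution, both for the \emph{same} potential $V$, and then applies Theorem~\ref{Thm:Comp} once to obtain $\A(\t)^q u_{\ast V}(x,t-\t)\leq\A(\t)^{-q}u(x,t+\t)$. You instead shift the comparison to the potential $V^\t$, compare $u^\t$ with $u_{\ast V^\t}$ directly, and then relate $u_{\ast V^\t}$ back to $u_{\ast V}$ via the two clean lemmas you isolate: scaling $u_{\ast,\A W}=\A^{1/(1-p)}u_{\ast W}$ and monotonicity $W_1\leq W_2\Rightarrow u_{\ast W_1}\leq u_{\ast W_2}$. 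Both routes hinge on exactly the same use of (\ref{Hyp:Convex}), and the final limit $\t\to0$ is identical; your modular packaging of the scaling and monotonicity facts is arguably tidier and makes the role of the convexity hypothesis more transparent, while the paper's version avoids introducing the auxiliary maximal solutions $u_{\ast V^\t}$ and $u_{\ast,\A(\t)V}$.
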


\begin{proof}
%
We proceed as in the preceding theorem. First, we note that the converse part, i.e.\ $u_{\ast V^{\kappa}}\left([\cdot-\kappa]_+\right)$ is an s-solution of (\ref{Eqn:zero}) with $V$ replaced by $V^{\kappa}$, is true by Lemma \ref{Lem:Semizero}. \smallskip

Now we prove the direct part. Assume $u$ is a nontrivial s-solution of (\ref{Eqn:zero}). We set $\kappa$ by (\ref{NonUniquet0}). Let us consider the case $\kappa=0$. By the same reasoning as before, then we have $u(x,t)>0$ for all $x\in\R^n$ and $t>0$. Let $\t>0$. Define 
\begin{align}
K=(1\wedge\A(\t))^{-q}\geq1\quad\Rightarrow\quad V\leq K^{1-p}V^\t.
\end{align}
Let $\T u\triangleq u^\t$, $\T\mu\triangleq e^t\T u$. We calculate
\begin{align}
\CM^s_{(e^{-\t}\mu(\t),V)}(K\T\mu)&=e^{-\t}\mu(\t)+\int_0^t\CB(K\T\mu)(s)\,ds+\int_0^te^{(1-p)s}\CB\left(VK^p\T\mu^p\right)(s)ds,\nonumber\\
&\leq K\left(e^{-\t}\mu(\t)+\int_0^\t\CB\T\mu(s)ds+\int_0^te^{(1-p)s}\CB\left(V^\t\T\mu^p\right)(s)ds\right)=K\T\mu.
\end{align}
Thus $\A(\t)^{-q}u^\t$ is an s-supersolution to Eq.\ (\ref{Eqn:Main}) with initial value $e^{-\t}u(\t)>0$.\smallskip

Let $k>0$ be a constant to be specified. Let $v\triangleq u_{\ast V}([t-\t]_+)$ and $\psi=e^tv$, then $\psi\equiv0$ if $t\leq\t$, and for $t>\t$, we have
\begin{align}
\CM^s_{(0,V)}(k\psi)&=\int_\t^t\CB(k\psi)(s)ds+\int_\t^te^{(1-p)s}\CB\left(Vk^p\psi^p\right)(s)ds,\nonumber\\
&=k\int_0^{t-\t}\CB\left(e^{s+\t}u_{\ast V}\right)ds+k^p\int_0^{t-\t}e^{(1-p)(s+\t)}\CB\left(V^\t e^{(s+\t)p}u_{\ast V}^p\right)ds,\nonumber\\
&=ke^\t\int_0^{t-\t}\CB(e^su_{\ast V})ds+k^pe^\t\int_0^{t-\t}e^{(1-p)s}\CB\left(V^\t e^{sp}u_{\ast V}^p\right)ds,\nonumber\\
&\geq ke^\t\int_0^{t-\t}\CB\T u\,ds+k^pe^\t\A(\t)\int_0^{t-\t}e^{(1-p)s}\CB\left(V\T u^p\right)ds,\nonumber
\end{align}
where
\[
\T u(x,t)\triangleq e^tu_{\ast V}(x,t).
\]
By taking $k=\A(\t)^q\leq1$, we find that $k^p\A(\t)=k$ and hence
\begin{align*}
\CM^s_{(0,V)}(k\psi)&\geq ke^\t\left(\int_0^{t-\t}\CB\T u\,ds+\int_0^{t-\t}e^{(1-p)s}\CB\left(V\T u^p\right)ds\right),\\
&=ke^\t\T u(t-\t)=k\psi.
\end{align*}
Thus $\A(\t)^qv$ is an s-subsolution to (\ref{Eqn:Main}) with zero initial value. By the comparison theorem (Theorem \ref{Thm:Comp}), we get that
\begin{align}
\A(\t)^qu_{\ast V}(x,t-\t)\leq\A(\t)^{-q}u(x,t+\t)\quad\forall\,x\in\R^n,t\geq\t.
\end{align}
Taking $\t\to0$ we obtain 
\[
u_{\ast V}(x,t)\leq u(x,t)\quad\forall\,x\in\R^n,t>0.
\]
Clearly $u\leq u_{\ast V}$, therefore $u=u_{\ast V}$ which proves the desired result when $\kappa=0$.\smallskip

Assume now that $\kappa>0$. By Lemma \ref{Lem:Semizero}, $u^\kappa(0)=u_{\ast V^{\kappa}}(0)=0$, $u^\kappa(t),u_{\ast V^\kappa}(t)>0$ for $t>0$, and $u^{\kappa}$, $u_{\ast V^{\kappa}}$ satisfy (\ref{Eqn:zero}) with the same potential function $V^\kappa$:
\begin{align}
w=\CM^s_{(0,V^{\kappa})}w.
\end{align}
By the previous case $\kappa=0$, we have $u^\kappa=u_{\ast V^\kappa}$ hence
\begin{align}
u(x,t+\kappa)=u_{\ast V^{\kappa}}(x,t)\quad\mbox{for all $x\in\R^n,t\geq0$}.
\end{align}
Since $u(x,t)=0$ whenever $0\leq t<\kappa$, we conclude that $u(x,t)=u_{V^{\kappa}}(x,[t-\kappa]_+)$.\QED
\end{proof}

\begin{corollary}

Let $0<p<1$, $\S\in J_n^+$, and $\L(t)>0$ satisfies $\L(t+\t)\geq\A(\t)\L(t)$ where $\A\in C(\R;\R_{>0})$, $\A(\t)\to1$ as $\t\to0$. A function $0\leq u\in\CZ_{\I,a}$ is a nontrivial solution of the problem
\begin{align*}
\begin{cases}
\P_tu-\Lap\P_tu=\Lap u+\L(t)|x|^\S u^p&x\in\R^n,t>0,\\
\qquad u(x,0)=0&x\in\R^n,
\end{cases}
\end{align*}
if and only if there is a constant $\kappa\geq0$ such that $u^\kappa$ is the unique maximal solution of the same problem but the potential is replaced by $\L(t+\kappa)|x|^\S$.

\end{corollary}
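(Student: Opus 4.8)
This corollary is the specialization of Theorem \ref{Thm:Classf3} to the potential $V(x,t)=\L(t)|x|^\S$, and the plan is to check that this $V$ meets the hypotheses of that theorem and then to rewrite its conclusion. First I would verify (\ref{HypNonUnq}). Taking $\Uv=\Ov=1$, the two-sided bound $\Uv\L(t)|x|^\S\leq V(x,t)\leq\Ov\L(t)|x|^\S$ holds trivially (with equality); moreover the lower bound in fact holds on all of $\R^n$, so by Remark \ref{Rem:AftLow} (i) the constant $\V_0$ in (\ref{Est:BVd}) may be taken equal to $1$, whence $p_0=\V_0(\Uv/\Ov)=1$ and the restriction ``$0<p<p_0$'' becomes simply ``$0<p<1$''. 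The growth index is harmless: if $u\in\CZ_{\I,a}$ with $a<\S/(1-p)$ then $u\in\CZ_{\I,a_0}$ with $a_0=\S/(1-p)$ because $BC_a\subset BC_{a_0}$ (cf.\ Remark \ref{Rem:Existence}), so (\ref{HypAL}) may be assumed without loss of generality. Finally the uniform convexity condition (\ref{Hyp:Convex}) reads, for this $V$,
\[
V(x,t+\t)=\L(t+\t)|x|^\S\geq\A(\t)\,\L(t)|x|^\S=\A(\t)\,V(x,t)\qquad(x\in\R^n,\ t,\t>0),
\]
which is exactly the hypothesis $\L(t+\t)\geq\A(\t)\L(t)$, with the same modulus $\A$.

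With the hypotheses in place, Theorem \ref{Thm:Classf3} states that a nontrivial $0\leq u\in\CZ_{\I,a}$ solves (\ref{Eqn:zero}) with $V=\L(t)|x|^\S$ if and only if $u=u_{\ast V^\kappa}(x,[t-\kappa]_+)$ for some $\kappa\geq0$, where $V^\kappa(x,t)=V(x,t+\kappa)=\L(t+\kappa)|x|^\S$. To conclude I would translate this into the stated form: since $u^\kappa(x,t)=u(x,t+\kappa)=u_{\ast V^\kappa}(x,[t+\kappa-\kappa]_+)=u_{\ast V^\kappa}(x,t)$ for $t\geq0$ (and $u\equiv0$ on $[0,\kappa]$ by the definition of $\kappa$), the identity ``$u=u_{\ast V^\kappa}(x,[t-\kappa]_+)$'' is equivalent to saying that $u^\kappa$ equals the unique maximal s-solution of (\ref{Eqn:zero}) with the potential replaced by $\L(t+\kappa)|x|^\S$, which is the assertion of the corollary.

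The only point that requires attention — and essentially the sole obstacle — is to make sure the proof of Theorem \ref{Thm:Classf3} does not use the bound $p<p_0$ in a way that cannot be repaired for this special potential. I would re-inspect that proof, together with the construction of the maximal s-solution $u_{\ast V}$ (Theorem \ref{Thm:UniqueMax}) that it invokes, and note that the hypothesis $p<p_0$ enters only through the comparison theorem (Theorem \ref{Thm:Comp}) and the well-posedness theorem (Theorem \ref{Thm:Unique} (1)); for the exact potential $V=\L(t)|x|^\S$ both are available on the full range $0<p<1$ via Corollary \ref{Cor:Compare} and Theorem \ref{Thm:Unique} (2), while the lower grow-up estimate (Theorem \ref{Thm:uniflower}), the global existence result (Theorem \ref{Thm:Existence}), and the semigroup identities of Lemma \ref{Lem:Semizero} hold for all $p\in(0,1)$ with no change. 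Substituting these into the argument of Theorem \ref{Thm:Classf3} therefore yields the corollary for every $0<p<1$. As a consistency check, when $\S=0$ the statement also follows directly from Theorem \ref{Thm:Classf1}, which does not even require the convexity hypothesis.
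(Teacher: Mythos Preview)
Your proposal is correct and is precisely the intended argument: the paper states this corollary immediately after Theorem \ref{Thm:Classf3} with no proof, so specializing that theorem to $V(x,t)=\L(t)|x|^\S$ and invoking Remark \ref{Rem:AftLow} (i) (equivalently Remark \ref{Rem:SharpComp} (i)) to get $\V_0=1$, hence $p_0=1$, is exactly what is meant. Your careful observation that within the proofs of Theorems \ref{Thm:UniqueMax} and \ref{Thm:Classf3} the restriction $p<p_0$ enters only through the comparison and uniqueness results, which for this exact potential are supplied by Corollary \ref{Cor:Compare} and Theorem \ref{Thm:Unique} (2) on the full range $0<p<1$, is the right way to justify the removal of that constraint.
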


\begin{remark}

If $V(x,t)$ is non-decreasing in $t$ for each $x$, then clearly we can take $\A\equiv1$ in (\ref{Hyp:Convex}) so that the convexity condition is true. Thus the following potentials
\[
Ct^k|x|^\S,\quad Ct^k(\log t)^\nu|x|^\S\quad(k,\nu\geq0,\S\in J_n^+)
\]
and generally,
\[
\L(t)|x|^\S\quad\mbox{with $\L'(t)\geq0$}
\]
satisfy the convexity condition.

\end{remark}

\section{Asymptotic grow-up rates}\label{Sec:Grow-up}

In Theorem \ref{Thm:SharpLower}, we have obtained the lower asymptotic grow-up rate of solutions to (\ref{Eqn:Main}) when the initial condition $u_0$ is only assumed to be non-trivial. In this section, by imposing the precise behavior at infinity of $u_0$, we will get both the sharp upper and sharp lower asymptotic behaviors of the solutions. More precisely, we assume (\ref{HypSharp}). This implies 
$u_0\in BC_a$ and $u_0\neq0$.\smallskip

Regarding the potential $V(x,t)$, it is assumed to have the form 
\begin{align}\label{Hyp:Vgrowup}
\begin{cases}
\displaystyle V=\Z(t)(1+t)^\nu|x|^\S\quad(\mbox{$x\in\R^n$, $t>0$}),\\
\vspace{-10pt}\\
\displaystyle\mbox{where}\,\,\lim_{t\to\I}t^\D\Z(t)=\I,\quad\lim_{t\to\I}t^{-\D}\Z(t)=0\quad(\forall\,\D>0)
\end{cases}
\end{align}
here $\nu\in\R,\S\in J_n^+$ and $\Z(t)(1+t)^\nu\in\CE_\V$ in the case $\S>0$ (see (\ref{Hyp:epsilonB})). 
Let
\begin{align}
a_0=\frac{\S}{1-p},\quad a_c=\frac{\S+2(\nu+1)_+}{1-p}.
\end{align}
If we set $\T a=\max\{a_0,a\}$ then it follows from the global existence (Theorem \ref{Thm:Existence}) and uniqueness (Theorem \ref{Thm:Unique}) results that Eq.\ (\ref{Eqn:Main}) admits a unique global s-solution $u\in\CZ_{\I,\T a}$.\smallskip

Our first main result of this section is the asymptotic behavior of solutions when the spatial growth exponent $a$ of the initial condition is above the critical value $a_c$.

\begin{theorem}[Supercritical, $a>a_c$]\label{Thm:a>ac}

Assume (\ref{HypSharp}), $n\geq1$, and 
\begin{align}
a\in\left(a_c,\I\right).
\end{align}
Let $u$ be the global s-solution of (\ref{Eqn:Main}) where $V$ has the form (\ref{Hyp:Vgrowup}) and $0<p<1$. Then
\begin{align}
\U Cl_1t^{\frac{a}{2}}\leq \|u(\cdot,t)\|_{R,a}\leq \O Cl_2t^{\frac{a}{2}}\quad\mbox{as $t\to\I$},
\end{align}
for some constants $\U C,\O C>0$.

\end{theorem}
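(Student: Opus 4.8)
Throughout write $q=1/(1-p)$ and set the ``effective exponent'' $b\triangleq\S+ap+2(\nu+1)_+$. Since $\S+2(\nu+1)_+=a_c(1-p)$, one has $b=a_c(1-p)+ap=a-(1-p)(a-a_c)$, so the hypothesis $a>a_c$ is exactly the statement that the gap $a-b=(1-p)(a-a_c)$ is strictly positive; this positivity is what will make the Duhamel term a genuinely lower-order contribution, with no smallness of $p$ required. Note also $a>a_c\geq a_0$, so by Theorem \ref{Thm:Existence} and Theorem \ref{Thm:Unique}(2) the s-solution satisfies $u\in\CZ_{\I,a}$ and $u=\CM u=\CG(t)u_0+\CN u(t)$ with $\CN u(t)\triangleq\int_0^t\CG(t-\t)\CB(Vu^p)(\t)\,d\t\geq0$.

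\emph{Lower bound.} Here I would use only $u\geq\CG(t)u_0$. By the $\liminf$ in (\ref{HypSharp}), for each $\V>0$ there is $r_\V>0$ with $u_0(x)\geq(l_1-\V)|x|^a$ for $|x|\geq r_\V$, hence $u_0\geq(l_1-\V)c_\V\big((1+|x|^2)^{a/2}-(1+r_\V^2)^{a/2}\big)$ on $\R^n$ with $c_\V\to1$. Applying $\CG(t)$, using $\CG(t)1=1$ (Lemma \ref{Lem:BasicBG}) and the two-sided lower estimate $\CG(t)\big((1+|\cdot|^2)^{a/2}\big)\gtrsim(1+kt+|x|^2)^{a/2}$ of Proposition \ref{Prop:TwoSB}(2), the subtracted constant becomes negligible for $t$ large, so $u(x,t)\gtrsim l_1(1+t+|x|^2)^{a/2}$; evaluating at $x=0$ gives $\|u(\cdot,t)\|_{R,a}\geq R^{-a}u(0,t)\gtrsim l_1t^{a/2}$, and letting $\V\to0$ produces the constant $\U C$.

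\emph{Upper bound.} For the linear part, the $\limsup$ in (\ref{HypSharp}) gives $u_0\leq(l_2+\V)(1+|x|^2)^{a/2}$ plus a bounded, compactly supported term, so Proposition \ref{Prop:TwoSB}(2) and $\CG(t)1=1$ give $\CG(t)u_0\leq C(1+t+|x|^2)^{a/2}$ uniformly in $t$, with $\limsup_{t\to\I}t^{-a/2}\|\CG(t)u_0\|_{R,a}\leq\O C(l_2+\V)$. For $\CN u(t)$, set $M(t)=\sup_{\t\leq t}\sup_x u(x,\t)(1+\t+|x|^2)^{-a/2}$, which is finite since $u\in\CZ_{\I,a}$ (because $(R^2+|x|^2)^{a/2}\leq R^a(1+\t+|x|^2)^{a/2}$) and nondecreasing in $t$. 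From $u(\t)^p\leq M(\t)^p(1+\t+|x|^2)^{ap/2}$ and $|x|^\S\leq(1+\t+|x|^2)^{\S/2}$ one gets $Vu^p\leq M(\t)^p\Z(\t)(1+\t)^\nu(1+\t+|x|^2)^{(\S+ap)/2}$; applying $\CB$ (Proposition \ref{Prop:BoundBG}, whose almost-contraction constant on the weight $(1+\t+|x|^2)^{\cdot}$ tends to $1$ as $\t\to\I$, the bounded range of $\t$ adding only a remainder of profile $(1+t+|x|^2)^{(\S+ap)/2}$) and then $\CG(t-\t)$ (Proposition \ref{Prop:TwoSB}, using $(1+\t)+c(t-\t)\leq1+t$), integrating in $\t$, and using that $\Z$ is subpolynomial so $\Z(s)\leq C_\V(1+s)^\V$ and hence $\int_0^\t\Z(s)(1+s)^\nu\,ds\lesssim_\V(1+\t)^{(\nu+1)_++\V}$ together with $(1+\t)^{(\nu+1)_+}\leq(1+\t+|x|^2)^{(\nu+1)_+}$, yields the pointwise bound $\CN u(\t)\lesssim_\V\big(1+M(\t)^p(1+\t)^\V\big)(1+\t+|x|^2)^{b/2}$. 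Dividing $u(x,\t)=\CG(\t)u_0+\CN u(\t)$ by $(1+\t+|x|^2)^{a/2}$ and choosing $0<\V\leq(a-b)/2$ (so that $(1+\t)^{(b-a)/2}\leq1$ and $(1+\t)^{(b-a)/2+\V}\leq1$) gives $u(x,\t)(1+\t+|x|^2)^{-a/2}\leq C+M(\t)^p$; taking the supremum over $x$ and over $\t\leq t$ gives $M(t)\leq C+M(t)^p$, which since $p<1$ forces $M(t)\leq C_\ast$ independent of $t$, i.e.\ $u(x,t)\lesssim(1+t+|x|^2)^{a/2}$ uniformly. Re-inserting $M(\cdot)\leq C_\ast$, $\|\CN u(t)\|_{R,a}\lesssim_\V(1+t)^{b/2+\V}=o(t^{a/2})$ (because $b/2+\V<a/2$), so $\limsup_{t\to\I}t^{-a/2}\|u(\cdot,t)\|_{R,a}\leq\O C\,l_2$ after $\V\to0$, which is the claimed upper bound.

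\emph{Main obstacle.} The crux is the Duhamel estimate: one must propagate the spatial profile $(1+\t+|x|^2)^{a/2}$ of the solution (not just its weighted norm) through $\CB$ and $\CG(t-\t)$ with the sharp two-sided bounds, and verify that the subpolynomial growth of $\Z$ together with the factor $(1+\t)^{(\nu+1)_+}$ coming from $\int_0^\t\Z(s)(1+s)^\nu\,ds$ still leaves the effective exponent $b$ strictly below $a$ — this is precisely where $a>a_c$ (and nothing else about $p$) enters; once that holds, the sublinearity of the source turns $M(t)\leq C+M(t)^p$ into a uniform bound. A subordinate but necessary point is the bookkeeping of constants so that the coefficients of $t^{a/2}$ in the two bounds come out proportional to $l_1$ and $l_2$ respectively, which relies on $\CN u(t)=o(t^{a/2})$ rather than merely $O(t^{a/2})$.
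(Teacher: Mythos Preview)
Your lower bound is essentially the paper's argument: both use $u\geq\CG(t)u_0$ together with the pointwise lower estimate of Proposition~\ref{Prop:TwoSB}(2) on $\CG(t)\big((1+|\cdot|^2)^{a/2}\big)$, then evaluate at $x=0$.

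Your upper bound, however, follows a genuinely different route. The paper constructs an explicit classical supersolution $w=\psi(t)\,(\r+\r t+|x|^2)^{a/2}$, where $\psi$ solves $\psi'=c_3^{-1}\Z(t)(1+t)^{-e}\psi^p$, $\psi(0)=1$; the key point is that $e=\gamma(1-p)-(\nu+1)_++1>1$ exactly when $a>a_c$, which together with the subpolynomial bound on $\Z$ forces $\psi$ to stay bounded. One then invokes the comparison principle (Corollary~\ref{Cor:Compare}) to conclude $u\leq w$. You instead work directly with the mild formulation, propagate the profile $(1+\t+|x|^2)^{a/2}$ through $\CB$ and $\CG(t-\t)$ via the pointwise weight estimates, and close with the algebraic inequality $M(t)\leq C+C'M(t)^p$. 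This is essentially the strategy the paper reserves for the subcritical case (Theorem~\ref{Thm:a<ac}), adapted here with your observation that the effective exponent $b=\S+ap+2(\nu+1)_+$ satisfies $a-b=(1-p)(a-a_c)>0$, so no iteration is needed---one pass already makes the Duhamel term lower order. Your approach avoids the comparison theorem entirely and, because $\CN u(t)=o(t^{a/2})$, yields an $\overline C$ genuinely independent of $l_2$ (the paper's $\Lambda_0$ actually depends on $l_2$ through $c_3$). The paper's approach is shorter and more transparent about the mechanism, but relies on the differential formulation and the comparison machinery of Section~\ref{Sec:CompU}.

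Two small corrections: in your upper bound for the linear part you want Proposition~\ref{Prop:TwoSB}(1), not (2); and the pointwise estimate $\CB\big((1+\t+|x|^2)^{d/2}\big)\leq\theta^{-1}(1+\t+|x|^2)^{d/2}$ you need is Lemma~\ref{Lem:BesSup} (valid once $1+\t\geq\r_0$), not Proposition~\ref{Prop:BoundBG} as stated, though the underlying computation is the same. Your handling of the bounded-$\t$ range as a separate remainder is the right fix.
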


\begin{proof}

We prove the lower bound. By the first part of (\ref{HypSharp}), there is $r_1>0$ such that $u_0(x)\geq\frac{l_1}{2}\left(1+|x|^2\right)^{a/2}$ for all $|x|\geq r_1$. This implies
\begin{align*}
u_0(x)\geq\frac{l_1}{2}\left(\left(1+|x|^2\right)^{\frac{a}{2}}-\left(1+r_0^2\right)^{\frac{a}{2}}\right)\quad\mbox{for all}\,\,x\in\R^n.
\end{align*}
Using Proposition \ref{Prop:TwoSB} (2) and noting that $a>2$, there is a constant $c_1=c_1(n,a)>0$ such that
\begin{align}
\CG(t)u_0&\geq\frac{l_1}{2}\left(c_1\left(1+t+|x|^2\right)^{\frac{a}{2}}-\left(1+r_0^2\right)^{\frac{a}{2}}\right),\nonumber\\
&\geq\frac{c_1l_1}{4}\left(1+t+|x|^2\right)^{\frac{a}{2}},\label{Tmp:Gtu0Lower}
\end{align}
provided $t\geq t_1\triangleq(4/c_1)^{2/a}\left(1+r_0^2\right)-1$. 
Since $u\geq\CG(t)u_0$, we get that
\begin{align}
\|u(\cdot,t)\|&=\sup_x\left(R^2+|x|^2\right)^{-\frac{a}{2}}|u(x,t)|\nonumber\\
&\geq\frac{c_1l_1}{4}\sup_x\left(\frac{1+t+|x|^2}{R^2+|x|^2}\right)^{\frac{a}{2}},\label{SupCrit1}\\
&\geq R^{-a}\frac{c_1l_1}{4}(1+t)^{\frac{a}{2}}\geq\U Cl_1t^{\frac{a}{2}}\quad\mbox{as $t\to\I$},\nonumber
\end{align}
where $\U C=R^{-a}c_1/4$. We have used that, for $t\gg1$, $(1+t+|x|^2)/(R^2+|x|^2)
$ is decreasing in $|x|$.
\smallskip

Next we prove the upper bound by constructing a super-solution. Employing the second part of (\ref{HypSharp}), there is $r_2>0$ such that $u_0(x)\leq2l_2\left(1+|x|^2\right)^{a/2}$ for $|x|\geq r_2$. Set $c_2=2l_2(1+r_2^2)^{a/2}\triangleq c_2'l_2$. Then
\begin{align}\label{AbigAc0}
u_0(x)\leq c_2\left(1+|x|^2\right)^{\frac{a}{2}}\quad\mbox{for all $x\in\R^n$}.
\end{align}
Let $\r\geq1$ to be specified. We introduce the function
\begin{align}
U=c_2\left(\r+\r t+|x|^2\right)^{\frac{a}{2}}.
\end{align}
If $\r=\r(n,a)$ is sufficiently large 
then we have as in the proof of Proposition \ref{Prop:TwoSB} (1) (with $\r=k$) that
\begin{align*}
\P_tU-\P_t\Lap U\geq\Lap U.
\end{align*}

Let $\psi=\psi(t)>0$ be a differentiable function to be specified such that $\psi'\geq0$ and define
\begin{align}
w=\psi U.
\end{align}
We calculate
\begin{align*}
\P_tw-\P_t\Lap w&=\psi\left(\P_tU-\P_t\Lap U\right)+\psi'\left(U-\Lap U\right),\nonumber\\
&\geq \Lap w+\psi'\left(U-\Lap U\right).
\end{align*}
Our goal is to select $\psi$ in such a way that 
\[
\psi'(U-\Lap U)\geq V\psi^pU^p.
\]

Let $a=a_0+2\gamma$ where $\gamma>0$ and denote $\VS=\r+\r t+r^2$ so $U=c_2\VS^{a/2}$. We note that 
\begin{align*}
\frac{a}{2}(1-p)=\frac{\S}{2}+\gamma(1-p).
\end{align*}
For $r>0$, using that $V=\Z(t)(1+t)^\nu r^\S$ one can calculate to get
\begin{align*}
\frac{U-\Lap U}{VU^p}&=(\Z(t)(1+t)^{\nu})^{-1}c_2^{1-p}\left(\frac{\VS^{\frac{\S}{2}}}{r^\S}\right)\VS^{\gamma(1-p)}\left(1-\left(2n+(2a-4)r^2\VS^{-1}\right)\frac{a}{2}\VS^{-1}\right),
\end{align*}
Note that 
\begin{align*}
r^2\VS^{-1}\in[0,1]\quad\mbox{and}\quad \VS^{-1}\in[0,1/\r].
\end{align*}
Choose $\r=\r(n,a)>0$ sufficiently large, then we have
\begin{align*}
\begin{cases}
\displaystyle 1-(2n+(2a-4)r^2\VS^{-1})\frac{a}{2}\VS^{-1}\geq\frac{1}{2},\\
\vspace{-10pt}\\
\displaystyle\frac{\VS^{\frac{\S}{2}}}{r^\S}=\left(1+\frac{\r+\r t}{r^2}\right)^{\frac{\S}{2}}\geq1,\\
\vspace{-10pt}\\
\displaystyle(\Z(t)(1+t)^{\nu})^{-1}\VS^{\gamma(1-p)}\geq \r^{\gamma(1-p)}\Z(t)^{-1}(1+t)^{e},
\end{cases}
\end{align*}
where
\begin{align}
e\triangleq \gamma(1-p)-(\nu+1)_++1.
\end{align}
Hence
\begin{align}\label{AbigAc1}
\frac{U-\Lap U}{VU^p}\geq c_3\Z(t)^{-1}(1+t)^{e}\quad\mbox{where}\,\,c_3\triangleq\frac{c_2^{1-p}}{2}\r^{\gamma(1-p)}.
\end{align}

We now choose $\psi>0$ by requiring that 
\begin{align}\label{AbigAc2}
\psi(0)=1,\quad\psi'=c_3^{-1}\Z(t)(1+t)^{-e}\psi^p.
\end{align}
Since $a>a_c$, we have $2\gamma=a-a_0>a_c-a_0=2(\frac{(\nu+1)_+}{1-p})$ so $e=\gamma(1-p)-(\nu+1)_++1>1$. 
Also, we note that $\Z(t)\leq C_\D(1+t)^\D$ for all $\D>0$, so 
\[
\Z(t)(1+t)^{-e}\leq C_\D(1+t)^{-e+\D}\quad\mbox{with}\,\,-e+\D<-1,
\]
for $\D>0$ sufficiently small. Fix such a $\D>0$. By solving the initial value problem, we obtain
\begin{align*}
\psi(t)=\left(1+\frac{1-p}{c_3}\int_0^t\Z(\t)(1+\t)^{-e}d\t\right)^{\frac{1}{1-p}}.
\end{align*}
The function $\psi$ has the upper bound
\begin{align*}
\psi(t)\leq\L_0\triangleq\left(1+\frac{1-p}{c_3}\frac{C_\D}{\gamma(1-p)-(\nu+1)_+-\D}\right)^{\frac{1}{1-p}}.
\end{align*}

By (\ref{AbigAc1}) and (\ref{AbigAc2}), we have $\psi'(U-\Lap U)\geq Vw^p$, hence now we obtain
\begin{align}
\P_tw-\P_t\Lap w\geq\Lap w+Vw^p\quad\mbox{in $Q_\I$},\quad w|_{t=0}\geq u_0.
\end{align}
Applying the comparison theorem (Corollary \ref{Cor:Compare}), we conclude that
\begin{align}
u\leq w\leq\L_0 c_2\left(\r+\r t+|x|^2\right)^{\frac{a}{2}}.\label{SupCrit2}
\end{align}
We take the norm
\begin{align*}
\|u(t)\|&\leq\L_0 c_2\sup_x(R+|x|^2)^{-\frac{a}{2}}(\r+\r t+|x|^2)^{\frac{a}{2}},\nonumber\\
&=\L_0 c_2\sup_x\left(\frac{\r+\r t+|x|^2}{R^2+|x|^2}\right)^{\frac{a}{2}}\leq\U Cl_2(1+t)^{\frac{a}{2}}\quad\mbox{as $t\to\I$},
\end{align*}
where $\O C=R^{-a}\L_0 c_2'\r^{a/2}$.\QED

\end{proof}




In the case $a=a_c$, the asymptotic property of solutions is described by the following theorem.


\begin{theorem}[Critical, $a=a_c$]\label{Thm:a=a_c}

Assume (\ref{HypSharp}), $n\geq1$, and 
\begin{align}
a=a_c=\frac{\S+2(\nu+1)_+}{1-p}.
\end{align}
Let $u$ be the global s-solution of (\ref{Eqn:Main}) where $V$ has the form (\ref{Hyp:Vgrowup}) and $0<p<1$. Then
\begin{align}
\U Cl_1t^{\frac{a}{2}}\leq \|u(\cdot,t)\|_{R,a}\leq \O C_\V l_2t^{\frac{a}{2}+\V}\quad\mbox{as $t\to\I$},
\end{align}
for some constants $\U C,\O C_\V>0$.

\end{theorem}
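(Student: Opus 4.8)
The plan is to follow the scheme of the proof of Theorem \ref{Thm:a>ac} essentially verbatim, the only genuine change being that the auxiliary scalar ODE that governs the time profile of the supersolution is now exactly at its borderline, non-integrable regime; this is what produces the extra factor $t^\V$ on the right. For the lower bound there is nothing new: from the first part of (\ref{HypSharp}) we get $u_0(x)\geq\frac{l_1}{2}(1+|x|^2)^{a/2}$ for $|x|$ large, hence $u_0(x)\geq\frac{l_1}{2}\big((1+|x|^2)^{a/2}-(1+r_1^2)^{a/2}\big)$ on $\R^n$; applying $\CG(t)$, using Proposition \ref{Prop:TwoSB} (2) (or, when $a\leq2$, Lemma \ref{Lem:BasicBG} (3), which affects only inessential constants) together with $u\geq\CG(t)u_0$, we obtain $\CG(t)u_0\geq\frac{c_1l_1}{4}(1+t+|x|^2)^{a/2}$ for $t$ large, and therefore $\|u(\cdot,t)\|_{R,a}\geq\U Cl_1t^{a/2}$ as $t\to\I$, exactly as in (\ref{SupCrit1}).

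For the upper bound I reuse the supersolution construction of Theorem \ref{Thm:a>ac}. By the second part of (\ref{HypSharp}) there is $c_2=c_2'l_2$ with $u_0(x)\leq c_2(1+|x|^2)^{a/2}$ on $\R^n$. Fix $\r=\r(n,a)\geq1$ large enough that $U\triangleq c_2(\r+\r t+|x|^2)^{a/2}$ satisfies $\P_tU-\P_t\Lap U\geq\Lap U$, and set $w=\psi U$ with $\psi>0$, $\psi'\geq0$, $\psi(0)=1$. Writing $a=a_0+2\gamma$, the same pointwise computation as in Theorem \ref{Thm:a>ac} shows that $w$ is a supersolution of (\ref{Eqn:Main}) with $w|_{t=0}\geq u_0$ provided $\psi'=c_3^{-1}\Z(t)(1+t)^{-e}\psi^p$, where $c_3=\frac{c_2^{1-p}}{2}\r^{\gamma(1-p)}$ and $e=\gamma(1-p)-(\nu+1)_++1$. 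The key point is that here $2\gamma=a-a_0=a_c-a_0=\frac{2(\nu+1)_+}{1-p}$, so $\gamma(1-p)=(\nu+1)_+$ and hence $e=1$ exactly, in contrast with the supercritical case where $e>1$.

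Solving the initial value problem gives $\psi(t)=\big(1+\frac{1-p}{c_3}\int_0^t\Z(\t)(1+\t)^{-1}d\t\big)^{1/(1-p)}$. Fix $\V>0$ and put $\D=\V(1-p)>0$; by (\ref{Hyp:Vgrowup}) there is $C_\D$ with $\Z(\t)\leq C_\D(1+\t)^\D$, so $\int_0^t\Z(\t)(1+\t)^{-1}d\t\leq C_\D\int_0^t(1+\t)^{\D-1}d\t\leq\frac{C_\D}{\D}(1+t)^\D$, whence $\psi(t)\leq\T C_\V(1+t)^{\D/(1-p)}=\T C_\V(1+t)^\V$ for a constant $\T C_\V$ depending on $n,a,p,\V$. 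Then by the comparison theorem (Corollary \ref{Cor:Compare}, applicable since $V=\L(t)|x|^\S$ with $\L=\Z(t)(1+t)^\nu>0$ continuous and $\S\in J_n^+$) we get $u\leq w=\psi U\leq\T C_\V c_2(1+t)^\V(\r+\r t+|x|^2)^{a/2}$, and taking the weighted norm, exactly as at the end of the proof of Theorem \ref{Thm:a>ac}, yields $\|u(\cdot,t)\|_{R,a}\leq\O C_\V l_2t^{a/2+\V}$ as $t\to\I$, with $\O C_\V=R^{-a}\T C_\V c_2'\r^{a/2}$.

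The main obstacle, such as it is, is precisely the borderline exponent $e=1$: unlike in the supercritical regime the time profile $\psi$ is no longer bounded, and one must exploit the hypothesis in (\ref{Hyp:Vgrowup}) that $\Z$ grows more slowly than any positive power of $t$ in order to trap $\psi(t)$ below $(1+t)^\V$ for an arbitrarily prescribed $\V>0$. The remaining work is bookkeeping: checking that the $\V$ produced in $\psi$ can be made exactly the $\V$ of the statement by choosing $\D=\V(1-p)$, and that all constants can be taken to depend only on $n,a,p,\V$ and (linearly) on $l_1$, $l_2$.
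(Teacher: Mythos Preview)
Your proof is correct and follows essentially the same route as the paper: the lower bound is obtained verbatim from the supercritical case via $u\geq\CG(t)u_0$ and Proposition \ref{Prop:TwoSB} (2), while for the upper bound you reuse the supersolution $w=\psi U$ from Theorem \ref{Thm:a>ac}, observe that now $e=\gamma(1-p)-(\nu+1)_++1=1$ exactly, and control $\psi(t)\lesssim(1+t)^{\V}$ by choosing $\D=\V(1-p)$ in the bound $\Z(\t)\leq C_\D(1+\t)^\D$ from (\ref{Hyp:Vgrowup}). The paper packages the final pointwise estimate slightly differently, as $u\leq C_\V l_2(1+t+|x|^2)^{a_c/2+\V}$, but this is cosmetics; the argument and the constants are the same.
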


\begin{proof}

By following the proof in the preceding theorem, we get the lower bound
\begin{align*}
u(x,t)\geq \frac{c_1l_1}{4}(1+t+|x|^2)^{\frac{a_c}{2}}\quad(\mbox{as $t\to\I$}).
\end{align*}
For the upper bound, we can employ the same calculations. In fact, we have $e=1$ in this case and the function $\psi$, for a fixed $\D>0$ small, is bounded by
\[
\psi(t)\leq\left(1+\frac{1-p}{c_3}C_\D t^\D\right)^{\frac{1}{1-p}}.
\]
So for any $\V=\D/(1-p)>0$ sufficiently small we obtain
\begin{align*}
u(x,t)&\leq c_2''l_2\left(1+\frac{1-p}{c_3}C_\D t^\D\right)^{\frac{1}{1-p}}(1+t+|x|^2)^{\frac{a_c}{2}},\\
&\leq C_\V l_2(1+t+|x|^2)^{\frac{a_c}{2}+\V},
\end{align*}
as $t\to\I$. We therefore obtain
\begin{align}
\U Cl_1t^{\frac{a_c}{2}}\leq\|u(\cdot,t)\|_{R,a}\leq\O C_\V l_2t^{\frac{a_c}{2}+\V}
\end{align}
for any $\V>0$ where $C_\V$ is a constant depending on $\V$.\QED

\end{proof}

Finally we consider the sub-critical case: $a<a_c$. If the asymptotic growth $a$ of $u_0$ (see (\ref{HypSharp})) satisfies $a<a_0$, the result will be the same as replacing $a$ with $a_0$ in the following theorem. It is remarkable that for the sub-critical exponent case, the initial condition plays no role in the asymptotic estimates. In other words, the asymptotic behavior of solutions in this case is dominated by the sublinearity and the potential not by the initial condition.

\begin{theorem}[Sub-critical, $a<a_c$]\label{Thm:a<ac}

Assume (\ref{HypSharp}), $n\geq1$, and 
\[
a\in[a_0,a_c).
\]
Let $u$ be the global s-solution of (\ref{Eqn:Main}) where $V$ has the form (\ref{Hyp:Vgrowup}) and $0<p<1$. Then, for any $\V>0$, there are constants $\U C,\O C_\V>0$ such that
\begin{align}
\U Ct^{\frac{a_c}{2}}\leq\|u(\cdot,t)\|_{R,a}\leq\O C_\V l_2^\V t^{\frac{a_c}{2}+\V}\quad\mbox{as $t\to\I$}.
\end{align}

\end{theorem}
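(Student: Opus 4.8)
The plan is to sandwich $\|u(\cdot,t)\|_{R,a}$ between two power profiles, following the proof of Theorem~\ref{Thm:a>ac} almost verbatim; the only new phenomenon is that in the subcritical range the ``effective exponent'' $e$ appearing in the barrier drops below $1$, so the time–weight $\psi(t)$ is no longer bounded but grows slowly. Since (\ref{HypSharp}) forces $u_0\in BC_{a_0}$ and the unique global $s$-solution then lies in $\CZ_{\I,a_0}$, it is enough to argue for $a\in[a_0,a_c)$ (replacing $a$ by $a_0$ and $\|\cdot\|_{R,a}$ by $\|\cdot\|_{R,a_0}$ in the conclusion when $a<a_0$). Note that $a_c>a_0$ forces $\nu>-1$, so $(\nu+1)_+=\nu+1$ throughout.

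For the upper bound I would reuse the supersolution of Theorem~\ref{Thm:a>ac}: write $a=a_0+2\gamma$ with $\gamma\in[0,\tfrac{(\nu+1)_+}{1-p})$, fix $\r=\r(n,a)\ge1$ large, pick $c_2\propto l_2$ from (\ref{HypSharp}) so that $u_0(x)\le c_2(1+|x|^2)^{a/2}$, and set $U=c_2(\r+\r t+|x|^2)^{a/2}$, $w=\psi(t)U$ with $\psi(0)=1$, $\psi'\ge0$. The identical computation gives $\P_tw-\P_t\Lap w\ge\Lap w+Vw^p$ once $\psi'(U-\Lap U)\ge Vw^p$, and since $\tfrac{U-\Lap U}{VU^p}\ge c_3\Z(t)^{-1}(1+t)^{e}$ with $e=\gamma(1-p)-(\nu+1)_++1$ and $c_3\propto c_2^{1-p}$, one takes $\psi$ solving $\psi'=c_3^{-1}\Z(t)(1+t)^{-e}\psi^p$, i.e.\ $\psi(t)=\bigl(1+\tfrac{1-p}{c_3}\int_0^t\Z(\t)(1+\t)^{-e}\,d\t\bigr)^{1/(1-p)}$. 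The new point is that $a<a_c$ means $e<1$, so $\int_0^t\Z(\t)(1+\t)^{-e}\,d\t$ diverges; using $\Z(\t)\le C_\D(1+\t)^\D$ (valid for every $\D>0$) it is $\lesssim_\D t^{1-e+\D}$, hence $\psi(t)\lesssim_\D t^{(1-e+\D)/(1-p)}$. By the comparison theorem (Corollary~\ref{Cor:Compare}) $u\le w$, so
\[
\|u(\cdot,t)\|_{R,a}\le\psi(t)\,\|U(\cdot,t)\|_{R,a}\lesssim_\D t^{(1-e+\D)/(1-p)}\,t^{a/2}\qquad(t\to\I),
\]
and the bookkeeping identity $\tfrac{1-e}{1-p}+\tfrac a2=\tfrac{a_c}{2}$ turns this into $\lesssim_\V t^{a_c/2+\V}$ with $\V=\tfrac{\D}{1-p}$; tracking the $c_2^{1-p}$ hidden in $c_3^{-1/(1-p)}$ against the $c_2$ in $U$ leaves only the weak factor $l_2^{\V}$.

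For the lower bound the decisive observation is that, unlike in the super- and critical cases, the linear term $\CG(t)u_0$ only delivers the rate $t^{a/2}$, which is strictly below $t^{a_c/2}$; the $t^{a_c/2}$ rate must be produced by the source. But $u$ is in particular an $s$-supersolution with $u_0\ge0$, $u_0\ne0$, and $\L(t)\triangleq\Z(t)(1+t)^\nu$ satisfies (\ref{HypVL}) and (when $\S>0$) lies in $\CE_\V$, so Theorem~\ref{Thm:SharpLower} applies and yields $\|u(\cdot,t)\|_{R,a}\gtrsim\L_\ast(t)^{1/(1-p)}t^{\S/(2(1-p))}$ as $t\to\I$; the growth hypotheses on $\Z$ make the right-hand side comparable to $t^{a_c/2}$, with a constant independent of $l_1$. (When $\S=0$ the $\CE_\V$ hypothesis is not needed and $a_0=0$, $a_c=\tfrac{2(\nu+1)_+}{1-p}$.)

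The steps that actually require care, rather than copying Theorem~\ref{Thm:a>ac}, are: (i) the exponent identity $\tfrac{1-e}{1-p}+\tfrac a2=\tfrac{a_c}{2}$, which must hold uniformly over the admissible $\gamma$ and is what makes the final rate independent of $a$; (ii) the constant bookkeeping that converts the linear dependence $c_2\propto l_2$ into the harmless $l_2^{\V}$; and (iii) confirming that Theorem~\ref{Thm:SharpLower} really delivers the clean exponent $a_c/2$. The differential–inequality verification for $w$ (choice of $\r$ so that $U-\Lap U\ge\tfrac12U$ and $\P_tU-\P_t\Lap U\ge\Lap U$) is unchanged from Theorem~\ref{Thm:a>ac}, so I expect no new difficulty there.
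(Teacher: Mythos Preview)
Your barrier argument for the upper bound is correct, but it takes a genuinely different route from the paper. The paper explicitly remarks that ``the technique from the preceding theorem cannot be applied'' and instead runs an iterative bootstrap directly on the mild formulation: one first proves a crude a~priori bound $\|u(\cdot,t)\|\le M^q(1+t)^{\D q}$ via a Bihari-type claim, then feeds it repeatedly into $u=\CG(t)u_0+\int_0^t\CG(t-\t)\CB(Vu^p)\,d\t$, tracking a recursive sequence of spatial exponents $\Z_k=\S+p\Z_{k-1}\to a_0$; after $N$ iterations the surplus time exponent is $\D qp^N$, which is made $\le\V$ by choosing $N$ large. Your approach---carrying the supercritical barrier $w=\psi(t)U$ into the range $e<1$, letting $\psi$ grow, and closing with the exponent identity $\tfrac{1-e}{1-p}+\tfrac a2=\tfrac{a_c}{2}$---is considerably more direct and appears simply to have been overlooked by the author; nothing in the comparison principle or the differential inequalities for $U$ breaks when $a\in[a_0,a_c)$. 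What the paper's iteration buys is precisely the $l_2^\V$ dependence stated (via $m_N\propto c_2^{qp^N}$ with $qp^N\le\V$); in your scheme, cancelling $c_3^{-1/(1-p)}\propto c_2^{-1}$ against the $c_2$ in $U$ actually leaves a leading constant \emph{independent} of $l_2$, not the $l_2^\V$ you claim---harmless, but worth noting. For the lower bound you and the paper do the same thing, citing Theorem~\ref{Thm:SharpLower}.
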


\begin{proof}
The lower bound was proved in Theorem \ref{Thm:SharpLower}. We prove the upper bound. It is remarkable that the technique from the preceding theorem cannot be applied. For simplicity of the presentation, we will consider the case $\nu>-1$. \smallskip

Arguing the same as the preceding theorem, we can assume that (\ref{AbigAc0}) is true. Let $R\geq k>1$ be large constants such that Proposition \ref{Prop:TwoSB} (1) and Lemma \ref{Lem:BesSup} (for a fixed $\theta$) hold and the following estimates are true
\begin{align}
&\CG(t)\left(R+\r+|x|^2\right)^{\frac{m}{2}}\leq\left(R+\r+kt+|x|^2\right)^{\frac{m}{2}},\label{A0Ac:1}\\
&\CB\left(R+\r+|x|^2\right)^{\frac{m}{2}}\leq\theta^{-1}\left(R+\r+|x|^2\right)^{\frac{m}{2}},\label{A0Ac:2}
\end{align}
for all $(x,t)\in\R^n\times(0,\I)$, $\r\geq0$, and $m$ in a compact subset of $[0,\I)$. In the following $BC_a$ will be equipped with the norm $\|\cdot\|\triangleq\|\cdot\|_{\sqrt{R},a}$.\smallskip

We shall choose $b>0$ sufficiently small later and then note that
\[
\Z(t)\leq\Vt(1+t)^b\quad\mbox{where}\,\,\Vt=\Vt(b).
\]
Let $\T\nu=\nu+b$ so that we have
\[
V(x,t)\leq\Vt(1+t)^{\T\nu}|x|^\S.
\]
From the definition of solutions we have
\begin{align}
u&=\CG(t)u_0+\int_0^t\CG(t-\t)\CB\left(Vu^p\right)(\t)\,d\t\nonumber\\
&\leq c_2
\CG(t)\left(1+|x|^2\right)^{\frac{a}{2}}+\int_0^t\CG(t-\t)\CB\left(\vartheta(1+\t)^{\T\nu}|x|^\S\left(\|u(\cdot,\t)\|\left(R+|x|^2\right)^{a/2}\right)^p\right)d\t,
\nonumber\\
&\leq c_2\CG(t)\left(R+|x|^2\right)^{\frac{a}{2}}+\vartheta\sup_{[0,t]}\|u(\cdot,\t)\|^p\int_0^t(1+\t)^{\T\nu}\CG(t-\t)\CB\left(R+|x|^2\right)^{\frac{\S+ap}{2}}d\t,\nonumber\\
&\leq c_2\left(R+kt+|x|^2\right)^{\frac{a}{2}}+\vartheta\theta^{-1}\sup_{[0,t]}\|u(\cdot,\t)\|^p\int_0^t(1+\t)^{\T\nu}\left(R+k(t-\t)+|x|^2\right)^{\frac{\S+ap}{2}}d\t,\nonumber\\
&\leq c_2\left(R+kt+|x|^2\right)^{\frac{a}{2}}+\vartheta\theta^{-1}\left(\sup_{[0,t]}\|u(\cdot,\t)\|\right)^p\left(R+kt+|x|^2\right)^{\frac{\S+ap}{2}}(1+t)^{\T\nu+1},\label{A0Ac:3}\\
&\leq c_2\left(R+|x|^2\right)^{\frac{a}{2}}(1+t)^{\frac{a}{2}}+\Vt\th^{-1}\left(\sup_{[0,t]}\|u(\cdot,\t)\|\right)^p\left(R+|x|^2\right)^{\frac{a}{2}}\left(1+t\right)^{\frac{\S+ap}{2}+\T\nu+1},\label{A0Ac:4}
\end{align}
where we have used (\ref{A0Ac:1}), (\ref{A0Ac:2}) (with $\r=0$) in the third estimate and that $\S+ap\leq a$, which is true because $a\geq a_0$, in the last estimate. \smallskip

Let 
\[
\D\triangleq\frac{\S+ap}{2}+\T\nu+1,\quad M\triangleq\max\{1,c_2+\Vt\th^{-1}\},\quad K\triangleq\Vt\th^{-1}.
\]
Since $a<\frac{\S+2(\nu+1)}{1-p}$, it follows that
\begin{align}\label{Tmp:aDelta}
\frac{a}{2}<\D.
\end{align}

\begin{claim}
We have
\begin{align}
\|u(\cdot,t)\|\leq M^q(1+t)^{\D q}\quad(\forall\,t>0).\label{A0Ac:5}
\end{align}

\end{claim}

\begin{proof}[\textbf{Claim}]

If $\|u(\cdot,t)\|\leq1$ then the claim is trivially true. Assume $\|u(\cdot,t)\|\geq1$. Then 
\[
\sup_{[0,t_1]}\|u(\cdot,\t)\|\geq1\quad\mbox{for $t_1>t$}.
\]
By (\ref{A0Ac:4}) and that $a/2<\D$, we get
\[
\|u(\cdot,t)\|\leq M\left(\sup_{[0,t_1]}\|u(\cdot,\t)\|\right)^p(1+t)^\D\,\,\,\,(\forall\,t<t_1)\quad\Rightarrow\quad\sup_{[0,t_1]}\|u(\cdot,\t)\|\leq M^q(1+t_1)^{\D q}.
\]
This is true for any $t_1>t$, hence
\begin{align*}
\|u(\cdot,t)\|\leq M^q(1+t)^{\D q}.\QED
\end{align*}
\end{proof}

Without loss of generality we assume $c_2\geq1$. We introduce the sequence of real numbers:
\begin{align}
\Z_0=a,\quad\Z_1=\S+ap,\quad\mbox{and}\quad\Z_k=\S+\Z_{k-1}p\quad(k\geq1).
\end{align}
Also for convenience, let 
\begin{align}
\VS\triangleq R+kt+|x|^2,\quad \VP\triangleq 1+t.
\end{align}
By (\ref{A0Ac:3}), (\ref{Tmp:aDelta}), and (\ref{A0Ac:5}), we have
\begin{align}
u(x,t)&\leq c_2\VS^{\frac{\Z_0}{2}}+K\left(\sup_{0\leq\t\leq t}\|u(\cdot,\t)\|\right)^p\VS^{\frac{\Z_1}{2}}\VP^{\T\nu+1},\nonumber\\
&\leq c_2\VS^{\frac{\Z_0}{2}}+K\left(M^q\VP^{\D q}\right)^p\VS^{\frac{\Z_1}{2}}\VP^{\T\nu+1},\nonumber\\
&\leq \CI_1\triangleq d_1\VS^{\frac{\Z_0}{2}}+m_1\VP^{\D qp+(\T\nu+1)}\VS^{\frac{\Z_1}{2}}\quad\mbox{where}\,\, d_1=c_2,\,\, m_1\triangleq KM^{qp}.\label{A0Ac:6}
\end{align}
We shall use the following fact. For $0\leq\t\leq t$, we have 
\begin{align}
&\CG(t-\t)\CB\left(V\VS^{\frac{m}{2}}\right)(\t)\leq\vartheta(1+\t)^{\T\nu}\CG(t-\t)\CB\left(\VS(\t)^{\frac{\S+m}{2}}\right),\nonumber\\
&\hphantom{\CG(t-\t)\CB\left(V\VS(\t)^{\frac{m}{2}}\right)}\leq\vartheta\theta^{-1}\VS^{\frac{\S+m}{2}}\VP^{\T\nu}
=K\VS^{\frac{\S+m}{2}}\VP^{\T\nu},
\label{A0Ac:7}
\end{align}
which is true according to (\ref{A0Ac:1}) and (\ref{A0Ac:2}) (taking $\r=k\t$). Note that $\VS=\VS(t)$.\smallskip

Then by plugging the estimate $u\leq \CI_1$ into the integral representation of $u$ we get
\begin{align}
u(x,t)&\leq c_2\VS^{\frac{\Z_0}{2}}+\int_0^t\CG(t-\t)\CB\left(V\CI_1^p\right)(\t)\,d\t,\nonumber\\
&\leq c_2\VS^{\frac{\Z_0}{2}}+2^p\int_0^t\CG(t-\t)\CB\left(d_1^p(V\VS^{\frac{\Z_0p}{2}})+m_1^p\VP^{\D qp^2+(\T\nu+1)p}(V\VS^{\frac{\Z_1p}{2}})\right)(\t)\,d\t,\nonumber\\
&\leq c_2\VS^{\frac{\Z_0}{2}}+2^pK\int_0^t\left(d_1^p\VS^{\frac{\Z_1}{2}}\VP^{\T\nu}+m_1^p\VP^{\D qp^2+(\T\nu+1)p}\VS^{\frac{\Z_2}{2}}\VP^{\nu}\right)d\t,\nonumber\\
&\leq d_2\left(\VS^{\frac{\Z_0}{2}}+\VS^{\frac{\Z_1}{2}}\VP^{\T\nu+1}\right)+m_2\VP^{\D qp^2+(\T\nu+1)(1+p)}\VS^{\frac{\Z_2}{2}}\triangleq \CI_2,
\end{align}
where
\begin{align*}
d_2=2^pKc_2,\quad m_2\triangleq 2^pK^{1+p}M^{qp^2}.
\end{align*}
Here we use the fact that $(a_1+\cdots+a_m)^p\leq m^p(a_1^p+\cdots+a_m^p)$ for $a_i\geq0$. Similarly, we have
\begin{align}
u&\leq c_2\VS^{\frac{\Z_0}{2}}+\int_0^t\CG(t-\t)\CB\left(V\CI_2^p\right)(\t)\,d\t,\nonumber\\
&\leq c_2\VS^{\frac{\Z_0}{2}}+3^p\int_0^t\CG(t-\t)\CB\Big(d_2^p(V\VS^{\frac{\Z_0p}{2}})+d_2^p(V\VS^{\frac{\Z_1p}{2}})\VP^{(\T\nu+1)p}\nonumber\\
&\hspace{6cm}+m_2^p\VP^{\D qp^3+(\T\nu+1)(p+p^2)}(V\VS^{\frac{\Z_2p}{2}})\Big)(\t)\,d\t,\nonumber\\
&\leq c_2\VS^{\frac{\Z_0}{2}}+3^pK\int_0^t\left(d_2^p\VS^{\frac{\Z_1}{2}}\VP^{\T\nu}+d_2^{p}\VS^{\frac{\Z_2}{2}}\VP^{\T\nu+(\T\nu+1)p}+m_2^p\VP^{\D qp^3+(\T\nu+1)(p+p^2)+\T\nu}\VS^{\frac{\Z_3}{2}}\right)d\t,\nonumber\\
&\leq d_3\left(\VS^{\frac{\Z_0}{2}}+\VS^{\frac{\Z_1}{2}}\VP^{\T\nu+1}+\VS^{\frac{\Z_2}{2}}\VP^{(\T\nu+1)(1+p)}\right)+m_3\VP^{\D qp^3+(\T\nu+1)(1+p+p^2)}\VS^{\frac{\Z_3}{2}}\triangleq \CI_3,
\end{align}
where
\begin{align*}
d_3=3^p2^{p^2}K^{1+p}c_2,\quad m_3=
3^p2^{p^2}K^{1+p+p^2}M^{qp^3}.
\end{align*}
By iterating $N$ times we obtain
\begin{align}
u(x,t)\leq I_{N}\triangleq d_N\sum_{k=0}^{N-1}\VS^{\frac{\Z_k}{2}}\VP^{(\T\nu+1)(1+p+\cdots+p^{k-1})}+m_N\VP^{\D qp^N+(\T\nu+1)(1+p+\cdots+p^{N-1})}\VS^{\frac{\Z_N}{2}},\label{A0Ac:8}
\end{align}
where 
\begin{align*}
d_N=\left(\prod_{k=2}^Nk^{p^{N+1-k}}\right)K^{(1+p+\cdots+p^{N-2})}c_2,\quad m_N\triangleq \left(\prod_{k=2}^Nk^{p^{N+1-k}}\right)K^{1+p+\cdots+p^{N-1}}M^{qp^N}.
\end{align*}

We observe that 
\begin{align*}
\Z_k=\S+\S p+\S p^2+\cdots+\S p^{k-1}+ap^k=\frac{\S}{1-p}+\left(a-\frac{\S}{1-p}\right)p^k.
\end{align*}
Since $\frac{\S}{1-p}\leq a<\frac{\S+2(\nu+1)}{1-p}$ and $0<p<1$, it follows that 
\begin{align}\label{A0Ac:9}
\begin{cases}
\displaystyle
\frac{\S}{1-p}\leq\Z_k\leq a,\\
\vspace{-10pt}\\
\displaystyle\frac{\Z_k}{2}+(\T\nu+1)\left(1+p+\cdots+p^{k-1}\right)
\leq\frac{a_c}{2}
\end{cases}
\end{align}
by taking $b>0$ sufficiently small. Using (\ref{A0Ac:9}) and (\ref{A0Ac:8}) together with that 
\[
\VS\leq(R+|x|^2)(1+t)
\]
we obtain
\begin{align*}
u(x,t)&\leq (R+|x|^2)^{\frac{a}{2}}\left(Nd_{N}(1+t)^{\frac{a_c}{2}}+m_N(1+t)^{\frac{a_c}{2}+\D qp^N}\right).
\end{align*}
Choose $N$ sufficiently large so that
\begin{align*}
N\geq\frac{\log(\V/((1\vee\D)q))}{\log p}\quad\Rightarrow\quad(1\vee\D)qp^N\leq\V.
\end{align*}
Then by taking $t$ sufficiently large, it follows that
\begin{align*}
u(x,t)\leq 2m_N(R+|x|^2)^{\frac{a}{2}}t^{\frac{a_c}{2}+\V}\leq C_\V l_2^\V(R+|x|^2)^{\frac{a}{2}}t^{\frac{a_c}{2}+\V}
\end{align*}
where $C_\V=2c_{N,p}K^q(c_2')^{qp^N}$. Therefore, we obtain
\begin{align}
\|u(\cdot,t)\|\leq\O C_\V l_2^\V t^{\frac{a_c}{2}+\V}
\end{align}
as $t\to\I$, where $\O C_\V=C_\V R^{-a}$. This proves the desired upper estimate.\QED
\end{proof}

\begin{remark}

We have discovered that if the spatial growth exponent $a$ of $u_0$ is in the interval $(-\I,a_c)$, then  the sublinear source takes control of the asymptotic grow-up rate of the solutions with the grow-up rate $\sim t^{a_c/2+\V}$ for any $\V>0$. At the critical exponent $a=a_c$, both the sublinear source and the initial value control the grow-up rate of the solutions. The initial condition takes over the control of the grow-up rate when $a$ is super-critical: $a\in(a_c,\I)$. 
\end{remark}

\begin{remark}\label{Rem:VisEf}

It is interesting to investigate the  effect of the third order viscous term $\Lap\P_tu$ in (\ref{Eqn:Main}) on the asymptotic properties of solutions. We consider non-trivial solutions $u\geq0$ of the Cauchy problem
\begin{align}\label{Eqn:Sobk}
\begin{cases}
\P_tu-k\Lap\P_tu=\Lap u+Vu^p\quad(\mbox{in $Q_\I$}),\\
\qquad u(x,0)=u_0(x),
\end{cases}
\end{align}
where $k>0$ is a constant and $u_0>0$ has $l_1,l_2\in(0,\I)$ given by
\begin{align}
l_1=\liminf_{|x|\to\I}\frac{u_0(x)}{|x|^a},\quad l_2=\limsup_{|x|\to\I}\frac{u_0(x)}{|x|^a}.
\end{align}
Setting
\begin{align}
U(x,t)=k^{-q}u\left(\sqrt{k}x,kt\right)\quad\left(q=\frac{1}{1-p}\right),
\end{align}
then $\P_tU=k^{1-q}\P_tu$, $\Lap\P_tU=k^{1-q}(k\Lap\P_tu)$, $\Lap U=k^{1-q}\Lap u$, and $VU^p=k^{1-q}(Vu^p)$, hence $U$ satisfies
\begin{align}\label{Eqn:Sobnk}
\begin{cases}
\P_tU-\Lap\P_tU=\Lap U+VU^p&(\mbox{in $Q_\I$}),\\
\qquad U(x,0)=U_0(x)\triangleq k^{-q}u_0.
\end{cases}
\end{align}
Conversely, given any solution $U$ of Eq.\ (\ref{Eqn:Sobnk}), then
\begin{align*}
u(x,t)=k^qU\left(\frac{x}{\sqrt{k}},\frac{t}{k}\right)
\end{align*}
solves Eq.\ (\ref{Eqn:Sobk}). 
\smallskip

By setting $y=x/\sqrt{k}$ and $\t=t/k$, we have
\begin{align}
\|u(\cdot,t)\|_{R,a}&=\sup_x\left(R^2+|x|^2\right)^{-a/2}u(x,t),\nonumber\\
&=k^q\sup_x(R^2+|x|^2)^{-a/2}U\left(x/\sqrt{k},t/k\right),\nonumber\\
&=k^{q}\sup_y\left(R^2+k|y|^2\right)^{-a/2}U(y,\t).
\end{align}
Consider the case $a>a_c$. By Theorem  \ref{Thm:a>ac} and that, for $U_0=k^{-q}u_0$, $L_1=k^{-q}l_1$ and $L_2=k^{-q}l_2$, we have the pointwise estimate
\begin{align*}
\frac{c_1k^{-q}l_1}{4}(1+\t+|y|^2)^{a/2}\leq U(y,\t)\leq\L_0c_2'k^{-q}l_2(\r+\r\t+|y|^2)^{a/2},
\end{align*}
which implies, as $\t\to\I$, that
\begin{align*}
\|u(\cdot,t)\|_{R,a}&\geq k^q\frac{c_1(k^{-q}l_1)}{4}\sup_y\left(\frac{1+\t+|y|^2}{R^2+k|y|^2}\right)^{a/2}\geq\U C'R^{-a}\t^{a/2}=\U Ck^{-\frac{a}{2}}t^{a/2},\\
\|u(\cdot,t)\|_{R,a}&\leq k^q\L_0c_2'(k^{-q}l_2)\sup_y\left(\frac{\r+\r\t+|y|^2}{R^2+k|y|^2}\right)^{a/2}\leq\O C'R^{-a}\t^{a/2}=\O Ck^{-\frac{a}{2}}t^{a/2}.
\end{align*}
Thus we obtain
\begin{align}
\U Ck^{-\frac{a}{2}}t^{a/2}\leq\|u(\cdot,t)\|_{R,a}\leq\O Ck^{-\frac{a}{2}}t^{a/2}\quad(\mbox{as $t\to\I$}).
\end{align}

Therefore, in the case $a>a_c$, the effect of the third order term in (\ref{Eqn:Sobk}) is determined by the factor $k^{-a/2}$ in the asymptotic norm estimates of solutions. The smaller the third order term ($k$ small) the higher the coefficients, whereas the greater the third order term ($k$ large) the smaller the coefficients.\smallskip

At the critical exponent $a=a_c$, the same analysis as above together with Theorem \ref{Thm:a=a_c} shows that
\begin{align}
\U Ck^{-\frac{a_c}{2}}t^{a_c/2}\leq\|u(\cdot,t)\|_{R,a_c}\leq\O C_\V k^{-\frac{a_c}{2}}t^{a_c/2+\V}\quad(\mbox{as $t\to\I$}).
\end{align}
Thus the third order term affects the  asymptotic grow-up of solutions by a factor of $k^{-a_c/2}$.\smallskip

Finally, consider the case $a<a_c$. 
Then we have by Theorem \ref{Thm:a<ac} that
\begin{align*}
\U C\t^{\frac{(\nu+1)_+}{1-p}}(1+\t+|y|^2)^{\frac{a_0}{2}}\leq
U(y,\t)\leq\O C_\V l_2^\V(R^2+|y|^2)^{\frac{a}{2}}\t^{\frac{a_c}{2}+\V}
\end{align*}
and
\begin{align*}
&\|u(\cdot,t)\|_{R,a}\geq k^q\U C\t^{\frac{(\nu+1)_+}{1-p}}\sup_y(R^2+k|y|^2)^{-\frac{a}{2}}(1+\t+|y|^2)^{\frac{a_0}{2}}\geq\U CR^{-a}k^{q-\frac{a_c}{2}}t^{\frac{a_c}{2}},\\
&\|u(\cdot,t)\|_{R,a}\leq k^q\O C_\V(k^{-q}l_2)^\V\sup_{y}\left(\frac{R^2+|y|^2}{R^2+k|y|^2}\right)^{\frac{a}{2}}\t^{\frac{a_c}{2}+\V}\leq\O C_\V l_2^\V k^{q-\frac{a_c}{2}-\V(q+1)}t^{\frac{a_c}{2}+\V},
\end{align*}
as $t\to\I$. So we have
\begin{align}
\U C R^{-a}k^{q-\frac{a_c}{2}}\leq\|u(\cdot,t)\|_{R,a}\leq\O C_{\V'}l_2^\V k^{q-\frac{a_c}{2}-\V'}t^{\frac{a_c}{2}+\V'}.
\end{align}
Thus in this case the third order term affects the asymptotic grow-up of solutions by the factor of $k^{q-a_c/2}$. 

\end{remark}

\section{Appendix}

In our study of sharp asymptotic behavior of solutions to (\ref{Eqn:Main}), Section \ref{Sec:Grow-up}, we shall need the following pointwise estimates involving the Bessel potential operator $\CB$ and the pseudoparabolic green operator $\CG(t)$ acting on the weight-type functions:
\begin{align}
\LL_{\r,d}=(\r+|x|^2)^{d/2},
\end{align}
where $\r>0$ and $d\in\R$.

\begin{lemma}\label{Lem:BesSup}

Let $n\geq1$, $d\in\R$, and $\V,\th\in(0,1)$. There is a constant $\r_0>0$ depending only on $n,d,\V,\theta$ such that if $\r\geq\r_0$ then
\begin{align}
\V(\r+|x|^2)^{\frac{d}{2}}\leq\CB(\r+|x|^2)^{\frac{d}{2}}\leq\theta^{-1}(\r+|x|^2)^{\frac{d}{2}}\quad(\mbox{on $\R^n$}).
\end{align}
Moreover, if $d$ is bounded then $\r_0$ is also bounded.

\end{lemma}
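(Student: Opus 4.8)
The plan is to follow the template of Proposition~\ref{Prop:BoundBG}: compute the pointwise action of $I-\Lap$ on the radial weight $\LL_{\r,d}=(\r+|x|^2)^{d/2}$, show that the resulting multiplier is uniformly close to $1$ once $\r$ is large, and then push the two-sided estimate through the positive operator $\CB=(I-\Lap)^{-1}$. Concretely, writing $r=|x|$ and $\S=\r+r^2$ (so $\LL_{\r,d}=\S^{d/2}$) and using the radial Laplacian $\Lap f=f''+\frac{n-1}{r}f'$, a direct differentiation gives
\[
\LL_{\r,d}-\Lap\LL_{\r,d}=\S^{d/2}\Big(1-dn\,\S^{-1}-d(d-2)\,r^2\S^{-2}\Big).
\]

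Since $0\le\S^{-1}\le\r^{-1}$ and $0\le r^2\S^{-2}\le\S^{-1}\le\r^{-1}$ for all $x$, the correction terms are dominated by $C_{n,d}\,\r^{-1}$ with $C_{n,d}\triangleq|dn|+|d(d-2)|$, so the multiplier lies between $1-C_{n,d}\r^{-1}$ and $1+C_{n,d}\r^{-1}$. Choosing $\r_0\geq C_{n,d}\max\{(1-\th)^{-1},\ \V(1-\V)^{-1}\}$ forces this multiplier into the interval $[\th,\V^{-1}]$ for every $x\in\R^n$ whenever $\r\ge\r_0$, i.e.
\[
\th\,\LL_{\r,d}\leq\LL_{\r,d}-\Lap\LL_{\r,d}\leq\V^{-1}\LL_{\r,d}\qquad(\mbox{on }\R^n).
\]
Now $\LL_{\r,d}\in BC_{d_+}$, and by Lemma~\ref{Lem:BasicBG} the operator $\CB$ is positive with $\CB(\vp-\Lap\vp)=\vp$ on these spaces; applying $\CB$ to the left inequality yields $\th\,\CB\LL_{\r,d}\leq\LL_{\r,d}$, hence $\CB\LL_{\r,d}\leq\th^{-1}\LL_{\r,d}$, while applying $\CB$ to the right inequality yields $\LL_{\r,d}\leq\V^{-1}\CB\LL_{\r,d}$, hence $\CB\LL_{\r,d}\geq\V\,\LL_{\r,d}$. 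These are exactly the two asserted bounds. The final ``moreover'' claim is then immediate, because the explicit threshold $\r_0$ was produced as a function of $C_{n,d}=|dn|+|d(d-2)|$, which stays bounded as $d$ ranges over any bounded set.

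The only step requiring genuine care is the passage of the pointwise inequality through $\CB$ when $d>0$, where $\LL_{\r,d}$ grows polynomially: one must invoke that $\CB$ maps $BC_a$ into itself and that the identity $\CB(\vp-\Lap\vp)=\vp$ is valid in this class, which is precisely what the $BC_a$-framework of Section~\ref{Sec:Prelim} supplies (cf.\ Lemma~\ref{Lem:BasicBG} and Proposition~\ref{Prop:BoundBG}). Beyond that, the argument is nothing more than the crude convexity estimate already used for Proposition~\ref{Prop:BoundBG}, now applied to a positive rather than a negative exponent.
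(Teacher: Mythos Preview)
Your proof is correct and follows essentially the same approach as the paper's: both compute $\LL_{\r,d}-\Lap\LL_{\r,d}$ explicitly, bound the multiplicative factor by $1\pm C_{n,d}\r^{-1}$ with $C_{n,d}=n|d|+|d(d-2)|$, choose $\r_0$ so that this factor lies in $[\th,\V^{-1}]$, and then apply the positive operator $\CB$ to invert the sandwich. Your explicit threshold $\r_0\geq C_{n,d}\max\{(1-\th)^{-1},\V(1-\V)^{-1}\}$ matches the paper's $\r_0=\max\{\r_1,\r_2\}$ exactly.
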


\begin{proof}

Let $\LL_{\r,d}=(\r+|x|^2)^{d/2}=\VS^{d/2}$ where $\VS=\VS(r)=\r+r^2$. Then
\begin{align*}
\begin{cases}
\displaystyle\P_r\LL_{\r,d}=dr\VS^{\frac{d-2}{2}},\\
\vspace{-10pt}\\
\displaystyle\P_{rr}\LL_{\r,d}=d\VS^{\frac{d-2}{2}}+d(d-2)r^2\VS^{\frac{d-4}{2}},\\
\vspace{-10pt}\\
\displaystyle\Lap\LL_{\r,d}=\P_{rr}\LL_{\r,d}+\frac{n-1}{r}\P_r\LL_{\r,d}=\left[nd\VS^{-1}+d(d-2)\left(r^2\VS^{-1}\right)\VS^{-1}\right]\LL_{\r,d}.
\end{cases}
\end{align*}
Thus we have 
\begin{align}\label{Tmp:AppLem1}
\LL_{\r,d}-\Lap\LL_{\r,d}=\left\{1-\left[n+(d-2)\left(r^2\VS^{-1}\right)\right]d\VS^{-1}\right\}\LL_{\r,d}.
\end{align}
Since $r^2\VS^{-1}\in[0,1]$ and $\VS^{-1}\in[0,1/\r]$, we have
\begin{align*}
\left(1-\frac{n|d|+|d(d-2)|}{\r}\right)\LL_{\r,d}\leq \LL_{\r,d}-\Lap\LL_{\r,d} \leq\left(1+\frac{n|d|+|d(d-2)|}{\r}\right)\LL_{\r,d}.
\end{align*}

Now we define
\begin{align*}
\r_1\triangleq\frac{n|d|+|d(d-2)|}{\V^{-1}-1},\quad\r_2\triangleq\frac{n|d|+|d(d-2)|}{1-\th}.
\end{align*}
If $\r\geq\r_1$ then we get $\LL_{\r,d}-\Lap\LL_{\r,d}\leq\V^{-1}\LL_{\r,d}$ hence
\begin{align*}
\CB\LL_{\r,d}\geq\V\LL_{\r,d}.
\end{align*}
If $\r\geq\r_2$ then we get $\LL_{\r,d}-\Lap\LL_{\r,d}\geq\th\LL_{\r,d}$ hence
\begin{align*}
\CB\LL_{\r,d}\leq\th^{-1}\LL_{\r,d}.
\end{align*}
Setting $\r_0=\max\{\r_1,\r_2\}$ the desired two-sided estimate is true for all $\r\geq\r_0$.\QED

\end{proof}

\begin{remark}
\begin{enumerate}
\item[(i)] If $d\in J_n^+$ then $(n+(d-2)(r^2\VS^{-1}))d\geq\min\{dn,d(d+n-2)\}\geq0$ which implies by (\ref{Tmp:AppLem1}) that  $\LL_{\r,d}-\Lap\LL_{\r,d}\leq\LL_{\r,d}$ hence 
\begin{align}
\CB\LL_{\r,d}\geq\LL_{\r,d}.
\end{align}
The latter estimate then implies $\CG(t)\LL_{\r,d}\geq\LL_{\r,d}$.
\item[(ii)] From the lemma, we obtain the two-sided estimate
\begin{align}
e^{-(1-\V)t}\LL_{\r,d}\leq\CG(t)\LL_{\r,d}\leq e^{-(1-\th^{-1})t}\LL_{\r,d},
\end{align}
which is true by the fact that $\CG(t)\vp=e^{-t}\sum_{k=0}^\I\frac{t^k}{k!}\CB^k\vp$.
\end{enumerate}

\end{remark}

\begin{proposition}\label{Prop:TwoSB}

Let $n\geq1$.
\begin{enumerate}
\item[(1)] Let $d\geq0$. There are constants $\r_0,k_0>0$ depending only upon $n,d$ such that if $\r\geq\r_0$ and $k\geq k_0$, then
\begin{align}
\CG(t)(\r+|x|^2)^{\frac{d}{2}}\leq(\r+kt+|x|^2)^{\frac{d}{2}}\quad(x\in\R^n,t\geq0).
\end{align}
Furthermore, if $d$ is bounded then $\r_0$ and $k_0$ are also bounded.
\item[(2)] Let $\r>0$. If either $d\geq0$ with $n\geq2$ or $d\in\{0\}\cup(1,\I)$ with $n=1$, then there is a constant $k_0>0$ depending only upon $n,d,\r$ such that if $0\leq k\leq k_0$ then
\begin{align}
\CG(t)(\r+|x|^2)^{\frac{d}{2}}\geq(\r+kt+|x|^2)^{\frac{d}{2}}\quad(x\in\R^n,t\geq0).
\end{align}
\end{enumerate}

\end{proposition}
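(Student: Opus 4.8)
The plan is to exhibit, for each part, an explicit classical super- (part (1)) resp.\ sub- (part (2)) solution of the \emph{linear} pseudoparabolic equation $\P_tv-\Lap\P_tv=\Lap v$ with initial value $\vp:=\LL_{\r,d}=(\r+|\cdot|^2)^{d/2}\in BC_d$, and then transfer the inequality to the mild solution $\CG(t)\vp$ via Duhamel's principle and positivity. Throughout set $W(x,t)=(\r+kt+|x|^2)^{d/2}$ and $\VS=\r+kt+r^2$ ($r=|x|$), so $W=\VS^{d/2}$, $W(\cdot,0)=\vp$, and one has the elementary bounds $r^2\VS^{-1}\in[0,1]$, $(\r+kt)\VS^{-1}\in(0,1]$ and $\VS^{-1}\leq\r^{-1}$. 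The case $d=0$ is trivial, since then $W\equiv\vp\equiv1$ and $\CG(t)1=1$, so assume $d>0$.

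The first step is the key computation. Using $\P_t\Lap=\Lap\P_t$ and the radial form of $\Lap$ exactly as in the proof of Lemma~\ref{Lem:BesSup}, a direct differentiation of $W-\Lap W$ and of $\Lap W$ yields the factorization
\[
E:=\P_tW-\Lap\P_tW-\Lap W
 = d\,\VS^{\frac d2-1}\!\left[\frac k2+c_1\,k\VS^{-1}+c_2\,(\r+kt)k\VS^{-2}-(n+d-2)+(d-2)(\r+kt)\VS^{-1}\right],
\]
where $c_1=(d-2)-(d+n-2)\tfrac{d-2}{2}$ and $c_2=\tfrac{(d-2)(d-4)}{2}$ depend only on $n,d$ (polynomially, hence remain bounded when $d$ ranges over a compact set).

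For part (1) I want $E\geq0$ on $\R^n\times(0,\I)$. Bounding the error terms from below by $c_1k\VS^{-1}+c_2(\r+kt)k\VS^{-2}\geq-(|c_1|+|c_2|)k\r^{-1}$ and $(d-2)(\r+kt)\VS^{-1}\geq-2$, the bracket is $\geq\tfrac k2\big(1-2(|c_1|+|c_2|)\r^{-1}\big)-(n+d)$. Hence it suffices to fix first $\r_0=\r_0(n,d)$ so large that $2(|c_1|+|c_2|)\r_0^{-1}\leq\tfrac12$, and then take $k_0=k_0(n,d)=4(n+d)$: for $\r\geq\r_0$ and $k\geq k_0$ one gets $E\geq0$. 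The point worth noting is to divide by $k$ \emph{before} estimating, so that the error terms become $O(\r^{-1})$ uniformly in $k$; this is what allows a single $\r_0$ to serve every $k\geq k_0$. For part (2), $\r>0$ is prescribed, and the hypothesis on $d$ is precisely $d\in J_n^+$, so $d+n-2\geq0$ and in fact $n+d-2>0$ for $d>0$ in the stated range (the value $d=1$ at $n=1$, where $n+d-2=0$, is exactly the one excluded). Now I bound the error terms from above by $(|c_1|+|c_2|)k\r^{-1}$ and $(d-2)(\r+kt)\VS^{-1}\leq(d-2)_+$, so the bracket is $\leq\tfrac k2+(|c_1|+|c_2|)k\r^{-1}-\delta_0$ with $\delta_0:=(n+d-2)-(d-2)_+=\min\{n,\,n+d-2\}>0$; choosing $k_0=k_0(n,d,\r):=\delta_0/\big(\tfrac12+(|c_1|+|c_2|)\r^{-1}\big)>0$ forces $E\leq0$ for all $0\leq k\leq k_0$ (the endpoint $k=0$ being the already-recorded fact $\CG(t)\LL_{\r,d}\geq\LL_{\r,d}$ for $d\in J_n^+$).

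It remains to pass from the sign of $E$ to the pointwise comparison. Since $\r>0$ keeps the base of $W$ bounded away from $0$, $W$ is a classical solution of $\P_tW-\Lap\P_tW=\Lap W+E$ with $W(\cdot,0)=\vp$; applying $\CB=(1-\Lap)^{-1}$ gives $\P_tW=\CB\Lap W+\CB E$, and by Duhamel's formula together with uniqueness for the linear non-local problem (as used in Section~\ref{Sec:1.3}),
\[
W(\cdot,t)=\CG(t)\vp+\int_0^t\CG(t-\t)\,\CB E(\cdot,\t)\,d\t .
\]
Because $\CG(t-\t)$ and $\CB$ are positive operators (Lemma~\ref{Lem:BasicBG}), $E\geq0$ gives $W\geq\CG(t)\vp$ (part (1)) while $E\leq0$ gives $W\leq\CG(t)\vp$ (part (2)), which are the claimed estimates; the boundedness of $\r_0,k_0$ for bounded $d$ is immediate from the explicit dependence of $c_1,c_2,\delta_0$ on $d$. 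The main obstacle in all of this is the algebraic reduction producing the factorization of $E$ and, for part (1), organizing the estimate so that the $\r$-threshold is independent of $k$; once the factorization is available, the rest is routine.
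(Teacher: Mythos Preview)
Your proof is correct and follows essentially the same strategy as the paper's: both compute the defect $E=\P_tW-\Lap\P_tW-\Lap W$ for the comparison function $W=(\r+kt+|x|^2)^{d/2}$, verify its sign by elementary bounds on $\VS^{-1}$ and a normalized variable (you use $(\r+kt)\VS^{-1}$, the paper uses $r^2\VS^{-1}=1-(\r+kt)\VS^{-1}$), and then invoke the linear comparison principle. The only cosmetic difference is that you make the last step explicit via the Duhamel identity $W=\CG(t)\vp+\int_0^t\CG(t-\t)\CB E(\cdot,\t)\,d\t$ together with positivity of $\CG,\CB$, whereas the paper simply cites ``the linear pseudoparabolic comparison principle''; the resulting thresholds $\r_0,k_0$ differ only by harmless constants.
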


\begin{proof}
We can assume $d>0$. Let $u=\CG(t)\left(\r+|x|^2\right)^{d/2}$ and $U=\left(\r+kt+|x|^2\right)^{d/2}$. Then $u$ satisfies the linear pseudoparabolic Cauchy problem
\begin{align}\label{Est:CGL1}
\P_tu-\Lap\P_tu&=\Lap u\quad\mbox{in $Q_\I$},\quad u|_{t=0}=\left(\r+|x|^2\right)^{d/2}.
\end{align}
We denote $\VS=\VS(r,t)=\r+kt+r^2$ so $U=\VS^{d/2}$. Then by direct calculations, we have
\begin{align*}
\begin{cases}
\displaystyle\P_tU=k\left(\frac{d}{2}\right)\VS^{\frac{d-2}{2}},\quad\P_rU=dr\VS^{\frac{d-2}{2}},\quad\P_{rr}U=d\VS^{\frac{d-2}{2}}+d(d-2)r^2\VS^{\frac{d-4}{2}},\\
\vspace{-10pt}\\
\displaystyle\Lap U=\P_{rr}U+\frac{n-1}{r}\P_rU
=\left(2n+2(d-2)(r^2\VS^{-1})\right)\left(\frac{d}{2}\right)\VS^{\frac{d-2}{2}},\\
\vspace{-10pt}\\
\displaystyle\P_t\Lap U=(d-2)\left[n+(d-4)(r^2\VS^{-1})\right](k\VS^{-1})\left(\frac{d}{2}\right)\VS^{\frac{d-2}{2}}.
\end{cases}
\end{align*}
Thus we have
\begin{align*}
\varPi&\triangleq\frac{\Lap U-\P_tU+\P_t\Lap U}{\left(\frac{d}{2}\right)\VS^{\frac{d-2}{2}}},\\
&=2n-k+2(d-2)(r^2\VS^{-1})+(d-2)\left[n+(d-4)(r^2\VS^{-1})\right](k\VS^{-1})
\end{align*}

To prove the first part, we take $\r_0,k_0>0$ sufficiently large so that $U$ is a supersolution to the linear problem. Let
\begin{align*}
\r_0\triangleq2|(d-2)(n+|d-4|)|,\quad k_0=4(n+|d-2|).
\end{align*}
Let $\r\geq\r_0$ and $k\geq k_0$. Since $r^2\VS^{-1}\in[0,1]$ and $\VS^{-1}\in[0,1/\r]$, we have 
\begin{align*}
\VP&\leq 2n-k+|2d-4|+\frac{|(d-2)(n+|d-4|)|}{\r}k,\nonumber\\
&\leq2n-\frac{1}{2}\left(k-4|d-2|\right)\leq0.
\end{align*}
So $U$ is a super-solution of (\ref{Est:CGL1}), that is
\begin{align}
\P_tU-\P_t\Lap U\geq\Lap U\quad\mbox{in $Q_\I$},\quad U|_{t=0}=(\r+|x|^2)^{d/2}.
\end{align}
By the linear pseudoparabolic comparison principle, we obtain
\begin{align*}
\CG(t)(\r+|x|^2)^{d/2}=u\leq U=\left(\r+kt+|x|^2\right)^{d/2},
\end{align*}
which is precisely the desired upper estimate.\smallskip

Now we prove the second part. For $n\geq2$ and $d>0$, it follows that $(d-2)[n+(d-4)(r^2\VS^{-1})]\geq\min\{(d-2)n,(d-2)[n+d-4]\}\geq\min\{(d-2)n,(d-2)(n-2)\}\geq-2n$. This implies
\begin{align*}
\VP&\geq 2n-k+2(d-2)(r^2\VS^{-1})-\frac{2n}{\r}k,\nonumber\\
&\geq 2n-k\left(1+\frac{2n}{\r}\right)+2(d-2)_-.
\end{align*}
Hence $\VP\geq0$ provided 
\begin{align}
0\leq k\leq k_1\triangleq\frac{4+2(d-2)_-}{1+\frac{2n}{\r}}.
\end{align}

For $n=1$ and $d>1$, $(d-2)[1+(d-4)(r^2\VS^{-1})]\geq\min\{d-2,(d-2)(d-3)\}\geq\min\{-1,-\frac{1}{4}\}=-1$. So we have
\begin{align*}
\VP&\geq 2-k+2(d-2)(r^2\VS^{-1})-\frac{k}{\r},\nonumber\\
&\geq2+2(d-2)_--k\left(1+\frac{1}{\r}\right).
\end{align*}
So we have $\VP\geq0$ provided
\begin{align}
0\leq k\leq k_2\triangleq\frac{2+2(d-2)_-}{1+\frac{1}{\r}}.
\end{align}
Taking $k_0\triangleq \min\{k_1,k_2\}$, we have $\VP\geq0$ provided $k\in[0,k_0]$. This implies $U$ is a sub-solution of (\ref{Est:CGL1}), i.e.
\begin{align}
\P_tU-\P_t\Lap U\leq\Lap U\quad\mbox{in $Q_\I$},\quad U|_{t=0}=(\r+|x|^2)^{d/2}.
\end{align}
Therefore 
\begin{align}
\CG(t)(\r+|x|^2)^{d/2}=u\geq U=\left(\r+kt+|x|^2\right)^{d/2},
\end{align}
which is the desired lower estimate.\QED

\end{proof}

\begin{remark}
There is a stronger two-sided estimate, similar to the preceding proposition, for the heat operator. This two-sided estimate can be proved easily using the {\it self-similarity} or {\it scaling property} of the heat equation. See for instance \cite{CazDickEscoWeiss01}. For the pseudoparabolic, there is no scaling transformation owing to the third order viscosity term. 
\end{remark}

\end{document}